\shorttitle{Perturbation theory for QSDs} % insert short title here for use in running head
\newcommand{\R}{\mathbb R}
\newcommand{\Rd}{\mathbb R^d}
\renewcommand{\P}{\mathbb P}
\newcommand{\Lk}{L^\kappa}
\newcommand{\taud}{\tau_\partial}
\newcommand{\lkap}{\lambda_0^\kappa}
\newcommand{\Dom}{\mathcal D}
\newcommand{\lspan}{\mathrm{span}}
\renewcommand{\H}{\mathcal H}
\begin{document}%\recd{}{}%Do not alter this line.

\title{Perturbation theory for killed Markov processes and quasi-stationary distributions} % insert title - use \\ if it requires more than one line.

\authorone[Universit\"at Passau]{Daniel Rudolf} 
%Affiliation is just the name of your university or institution, for example 'University of Sheffield'. Author names should be of the form 'Mark Yarrow'. 
%Authors should be ordered alphabetically subject to the convention in that particular authors country. For example 'Remco van der Hofstad' would be listed under 'H' as is standard in the Netherlands. 

%Please use the following format for addresses and emails. The APT office will sort this out after you submit your files.
\addressone{Faculty of Computer Science and Mathematics,
Universit\"at Passau, Innstrasse 33, 94032 Passau, Germany} % Your postal address goes here.
\emailone{daniel.rudolf@uni-passau.de} %Authors email goes here.

\authortwo[University of Warwick]{Andi Q. Wang}
\addresstwo{Department of Statistics, University of Warwick, Coventry, CV4 7AL, UK}
\emailtwo{andi.wang@warwick.ac.uk}

\begin{abstract}
 Motivated by recent developments of quasi-stationary Monte Carlo methods, we investigate the stability of quasi-stationary distributions of killed Markov processes under perturbations of the generator. 
    We first consider a general bounded self-adjoint perturbation operator, and after that, study a particular unbounded perturbation corresponding to truncation of the killing rate.
    In both scenarios, we quantify the difference between eigenfunctions of the smallest eigenvalue of the perturbed and unperturbed generators in a Hilbert space norm. As a consequence, $\mathcal{L}^1$ norm estimates of the difference of the resulting quasi-stationary distributions in terms of the perturbation are provided.
\end{abstract}

\keywords{Monte Carlo methods 
% \textcolor{blue}{quasi-stationary distribution; perturbation theory of Markov chains}
}%insert keywords separated by a semicolon. You should avoid including keywords which also appear in the title.

\ams{60J35}{65C05;60J22}% insert the primary 2020 Maths Subject Classification number in the first bracket
		% and the secondary ams number(s) in the second bracket
		% e.g. \ams{60E20}{49G03;49F10}
		%Maximum of three in each, ideally one or two in each primary and secondary.
		%codes found here ``https://mathscinet.ams.org/msnhtml/msc2020.pdf''

\section{Introduction}

% General setting of perturbation theory
Perturbation theory concerns the investigation of
the stability of a given system under small changes. In the context of Markov chains, one typically studies the impact of perturbing the transition mechanism, namely the transition kernel or matrix, on the behavior of the corresponding chain. For example, one can study the resultant effect on the limiting distribution and approximation properties, see e.g.
\cite{kartashov2019strong,roberts1998convergence,mitrophanov2003stability,mitrophanov_2005,rudolf2018} and the references therein.

% Content of this work and motivation
In this work, we extend the study of perturbation theory to the setting of \textit{killed} Markov processes which possess \textit{quasi-stationary distributions}. This is particularly motivated by recent developments in Markov chain Monte Carlo (MCMC), where killed diffusions have been used to construct scalable algorithms to perform Bayesian inference for large data sets, see \cite{Pollock2020, Kumar2019}. Convergence properties of this approach, known as \textit{quasi-stationary Monte Carlo} (QSMC), have been investigated in \cite{Wang2019TheoQSMC}.

Given that such methods have been applied to perform Bayesian inference, it is important to study their stability. We ask the following question:
Can the quasi-stationary distribution be adversely affected by small perturbations of the system, which could arise, for instance, due to data corruption, or missing data?

We start by modeling the perturbation as a bounded self-adjoint operator and quantify the difference in a general setting between the eigenfunctions of the smallest eigenvalues with respect to the perturbed and unperturbed generators, see Theorem~\ref{thm:bdd_case}.
As a consequence, one can develop $\mathcal{L}^1$-bounds on the difference between the corresponding quasi-stationary distributions, which is illustrated in the QSMC framework for logistic regression, see Proposition~\ref{prop:logistic_bd}.

%%%%%%%%%%%%%%%%%%%%%

%  killing rate stability

%%%%%%%%%%%%%%%%%%%%%

In addition, we also consider a specific unbounded perturbation, which corresponds to truncating a divergent killing rate.
In practical implementations of QSMC methods, such a truncation can significantly reduce the complexity of the implementation, since simulating arrivals from a Poisson process with a bounded intensity is straightforward using Poisson thinning, whereas unbounded intensities are much more intricate. 
We give an explicit bound on the $\mathcal{L}^2$-norm, depending on the truncation level, of the difference of the eigenfunctions of the smallest eigenvalue of the generators with and without truncation, see Theorem~\ref{thm:unbdd_case}. We apply this result to an Ornstein-Uhlenbeck diffusion with killing and obtain an estimate of the difference between the perturbed quasi-stationary distribution and a Gaussian target distribution in terms of an $\mathcal{L}^1$-norm, see Propositions~\ref{prop:unbdd_L1}, \ref{prop:multivar_trunc}. In this setting, we find that truncation is robust with respect to the dimension; in any dimension $d\ge 1$, the error decays exponentially as a function of the truncation level.

%%%%%%%%%%%%%%%%%%%%%%%%%%% START %%% Related literature %%%%%%%%%%%%%%%

\subsection{Related results and literature review}

Quasi-stationary distributions have been, and continue to be, actively studied within the probability theory literature; see for instance the book of \cite{Collet2013} or the review paper \cite{Meleard2012}. More recently, the construction of schemes with a desired quasi-stationary distribution has been investigated in the context of computational statistics, see \cite{Pollock2020, Kumar2019, Wang2019TheoQSMC}. Quasi-stationary distributions have also been utilized recently as a tool to analyse other algorithms; see for instance \cite{Lelievre2015, Baudel2020} for analyses of Monte Carlo algorithms in the context of molecular dynamics. 

Perturbation theory in the context of Markov chains (without killing) is richly studied in the literature; see for instance the references \cite{kartashov2019strong,roberts1998convergence,mitrophanov2003stability, mitrophanov_2005,johndrow2017error, rudolf2018,Medina-Aguayo2018,hosseini2018convergence, Negrea2021,fuhrmann2021wasserstein}. 
However, there are key differences between the classical Markov chain setting and our quasi-stationary setting. For a Markov chain with killing, we typically study the conditional distribution after $n$ steps \textit{conditional on survival}. This distribution often converges to a quasi-stationary distribution, whereas in the classical Markov chain setting there is no such conditioning. This makes the approximate discrete-time Markov chain with killing setting more demanding, and we present a result in the subsequent Section~\ref{subsec:discrete-time}. 

However, following our motivation of QSMC methods \cite{Pollock2020}, the main focus of this work will be on continuous-time diffusion processes with killing and their quasi-stationary distributions.
This is a departure from the traditional Markov chain perturbation theory literature, as cited above, and requires new tools.
Thus we apply functional analytic tools, building on techniques developed for the study of perturbations of linear operators, a rich and mature field \cite{Reed1978, Baumgartel1985,Kato1995}. Several of our results will thus consist of translating these abstract results appropriately into the setting of killed diffusions. %For completeness, we have given in the appendix some proofs which may be considered standard for analysts, but may be less familiar to a more probabilistic and statistical readership.

\subsection{Result for discrete-time Markov chains}\label{subsec:discrete-time}
We now give a result for discrete-time Markov chains with killing; in our discussion \cite{Rudolf2020}
this has already been reported without proof.
We repeat the specific setting and the statement:
Let $(S_0,\mathcal{S}_0)$ be a measurable space and $\partial\not\in S_0$ an additional element, which serves as ``trap'' of the killed Markov chain. Moreover, $Q\colon S_0\times \mathcal{S}_0 \to [0,1]$ is a transition kernel and $\kappa \colon S_0 \to [0,1]$ is used to describe a state-dependent killing probability. For a probability space $(\Omega,\mathcal{F},\mathbb{P})$, let $(X_n)_{n\in\mathbb{N}_0}$ be a  sequence of random variables $X_n\colon \Omega \to S_0\cup \{\partial \}$ determined by the following transition mechanism, depending on $Q$ and $\kappa$ with $X_0:=x_0\in S_0$. Given $X_n=x$ with $x\in S_0 \cup \{\partial \}$, the distribution of $X_{n+1}$ is specified by two steps:
\begin{enumerate}
    \item If $x=\partial$, then return $X_{n+1}:= \partial$;
    \item \label{enu:step2} Otherwise, draw a uniformly distributed sample $u\sim \textrm{Unif}[0,1]$. Independently, draw a sample $y\sim Q(x,\cdot)$. Then, return
    \[
        X_{n+1}:=
        \begin{cases}
            \partial & \textrm{if } u\leq \kappa(x),\\
            y & \textrm{if } u>\kappa(x).
        \end{cases}
    \]
\end{enumerate}
Thus, $(X_n)_{n\in\mathbb{N}_0}$ is a Markov chain with killing, determined by $Q$, $\kappa$ and initialization $x_0$. Now suppose we only have access to $\kappa$ through an approximation $\widetilde\kappa\colon S_0 \to [0,1]$ with $\Vert \kappa-\widetilde{\kappa} \Vert_\infty$ small. Setting $\widetilde{X}_0 := x_0$, let $(\widetilde{X}_n)_{n\in\mathbb{N}_0}$ be another sequence of random variables on $(\Omega,\mathcal{F},\mathbb{P})$ which is constructed identically to $(X_n)_{n\in\mathbb{N}_0}$, except that
$\widetilde{\kappa}$ is used in step~\ref{enu:step2} above instead of $\kappa$. 
For the convenience of the reader, we write $\mathbb{P}_{x_0}$ instead of $\mathbb{P}$ to indicate the initialization at $x_0$.
Now, under suitable regularity conditions we can quantify the difference of the conditional distributions of interest, derived from the Markov chains with killing rates $\kappa$ and $\widetilde{\kappa}$.
\begin{proposition} \label{prop: discrete_time_perturbation}
Assume that there exist $\alpha,\widetilde{\alpha}\in (0,1)$ and $c_\ell,\widetilde{c}_\ell,c_u,\widetilde{c}_u\colon S_0 \to (0,\infty)$, such that for any $n\in\mathbb{N}$ we have
\begin{align}
\label{al: exp_decay_discrete_time1}
\alpha^n c_\ell(x_0) & \leq \mathbb{P}_{x_0}(X_n\in S_0) \leq \alpha^n c_u(x_0),\\
\label{al: exp_decay_discrete_time2}
    \widetilde\alpha^n \widetilde c_\ell(x_0) & \leq \mathbb{P}_{x_0}(\widetilde{X}_n\in S_0) \leq \widetilde\alpha^n \widetilde c_u(x_0).
\end{align}
Then
	\begin{align*}
  &	\left \Vert \mathbb{P}_{x_0}(X_n\in \cdot\mid X_n\in S_0) - \mathbb{P}_{x_0}(\widetilde{X}_n\in \cdot\mid \widetilde{X}_n\in S_0)\right \Vert_{\text{\rm tv}} 
 % \\
%	& \qquad \qquad \qquad
	\leq   \Vert \kappa - \widetilde{\kappa} \Vert_{\infty} K(x_0) \min\{n,\vert \alpha-\widetilde{\alpha} \vert^{-1}\}, \end{align*}
	where 
	\[
		K(x_0) := 2 \sup_{z\in S_0} \int_{S_0}  \frac{\widetilde{c}_u(y) c_u(x_0)}{\min\{c_\ell(x_0),\widetilde{c}_\ell(x_0)\}}\, Q(z,{\rm d} y),
	\]
	and $\Vert \cdot \Vert_{\text{\rm tv}}$ denotes the total variation distance.
\end{proposition}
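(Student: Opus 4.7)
The plan is to pass to the sub-Markov semigroups associated with the two killing rates, reduce the conditional total-variation distance to a bound on the difference of the unnormalized sub-probability measures, and then exploit operator telescoping together with the exponential-decay assumptions. Concretely, introduce the sub-Markov kernels $P(x,A):=(1-\kappa(x))Q(x,A)$ and $\widetilde P(x,A):=(1-\widetilde\kappa(x))Q(x,A)$ on $S_0$, so that $\mu_n(A):=\mathbb{P}_{x_0}(X_n\in A)=P^n(x_0,A)$ and $\widetilde\mu_n(A):=\mathbb{P}_{x_0}(\widetilde X_n\in A)=\widetilde P^n(x_0,A)$. A standard manipulation of $\mu_n(A)\widetilde\mu_n(S_0)-\widetilde\mu_n(A)\mu_n(S_0)$, combined with $|\mu_n(S_0)-\widetilde\mu_n(S_0)|\le\|\mu_n-\widetilde\mu_n\|_{\mathrm{tv}}$ and $\mu_n(A)\le\mu_n(S_0)$, yields
\[
\left\|\frac{\mu_n}{\mu_n(S_0)}-\frac{\widetilde\mu_n}{\widetilde\mu_n(S_0)}\right\|_{\mathrm{tv}} \le \frac{2\|\mu_n-\widetilde\mu_n\|_{\mathrm{tv}}}{\max\{\mu_n(S_0),\widetilde\mu_n(S_0)\}},
\]
and by \eqref{al: exp_decay_discrete_time1}--\eqref{al: exp_decay_discrete_time2} this denominator is at least $\max(\alpha,\widetilde\alpha)^n\min\{c_\ell(x_0),\widetilde c_\ell(x_0)\}$.

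Next I would apply the operator telescoping identity $P^n-\widetilde P^n = \sum_{j=0}^{n-1} P^j(P-\widetilde P)\widetilde P^{n-1-j}$, using the key observation that $(P-\widetilde P)f = (\widetilde\kappa-\kappa)Qf$, so $|(P-\widetilde P)f| \le \|\kappa-\widetilde\kappa\|_\infty Q|f|$. Applied with $f=\widetilde P^{n-1-j}\mathbf{1}_A$ and using $\mathbf{1}_A\le \mathbf{1}_{S_0}$, each summand is dominated by $\|\kappa-\widetilde\kappa\|_\infty P^j[Q\widetilde P^{n-1-j}\mathbf{1}_{S_0}](x_0)$. Inserting the upper bounds $\widetilde P^{n-1-j}\mathbf{1}_{S_0}(y)\le\widetilde\alpha^{n-1-j}\widetilde c_u(y)$ and $P^j(x_0,S_0)\le\alpha^j c_u(x_0)$, and pulling the sup over the intermediate state $z$ outside the $P^j$-integral, produces
\[
\|\mu_n-\widetilde\mu_n\|_{\mathrm{tv}} \le \|\kappa-\widetilde\kappa\|_\infty\, c_u(x_0)\sup_{z\in S_0}\int_{S_0}\widetilde c_u(y)Q(z,\mathrm{d}y)\,\sum_{j=0}^{n-1}\alpha^j\widetilde\alpha^{n-1-j}.
\]

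To finish, I would bound the geometric-type sum $\sum_{j=0}^{n-1}\alpha^j\widetilde\alpha^{n-1-j}$ in two complementary ways: term-by-term by $n\max(\alpha,\widetilde\alpha)^{n-1}$, and (when $\alpha\ne\widetilde\alpha$) via the closed form $(\alpha^n-\widetilde\alpha^n)/(\alpha-\widetilde\alpha)$, bounded by $\max(\alpha,\widetilde\alpha)^n/|\alpha-\widetilde\alpha|$. Combining these with $\max(\alpha,\widetilde\alpha)\le 1$ yields $\max(\alpha,\widetilde\alpha)^{n-1}\min\{n,|\alpha-\widetilde\alpha|^{-1}\}$, and dividing by the denominator lower bound essentially cancels the $\max(\alpha,\widetilde\alpha)^n$ factors, delivering the claimed bound $\|\kappa-\widetilde\kappa\|_\infty K(x_0)\min\{n,|\alpha-\widetilde\alpha|^{-1}\}$.

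The main technical obstacle is the careful bookkeeping of the exponential survival factors: the $\max(\alpha,\widetilde\alpha)^n$ in the denominator must cancel the corresponding factor emerging from the sum $\sum_j \alpha^j\widetilde\alpha^{n-1-j}$, and both the trivial term-by-term estimate and the closed-form geometric-series bound are required to produce the unified factor $\min\{n,|\alpha-\widetilde\alpha|^{-1}\}$ covering the $\alpha=\widetilde\alpha$ and $\alpha\ne\widetilde\alpha$ regimes in a single statement. The specific form of $K(x_0)$, in particular the appearance of $\sup_z\int\widetilde c_u(y)Q(z,\mathrm{d}y)$, is a direct fingerprint of the un-killed $Q$-step sandwiched between the $P^j$ and $\widetilde P^{n-1-j}$ blocks in the telescoping identity, and bounding this step uniformly over the intermediate state is what produces that factor in $K(x_0)$.
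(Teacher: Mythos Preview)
Your proposal is correct and follows essentially the same approach as the paper. The only cosmetic difference is that you work with the sub-Markov kernels $P=(1-\kappa)Q$ and $\widetilde P=(1-\widetilde\kappa)Q$ and telescope at the operator level, whereas the paper writes $\mathbb{P}_{x_0}(X_n\in A)=\mathbb{E}_{x_0}[\mathbf{1}_A(Y_n)\prod_{j=0}^{n-1}G(Y_j)]$ for the unkilled chain $(Y_n)$ and telescopes the product $\prod G-\prod\widetilde G$ pathwise; the reduction to $2\|\mu_n-\widetilde\mu_n\|_{\mathrm{tv}}/\max\{\mu_n(S_0),\widetilde\mu_n(S_0)\}$, the insertion of the survival bounds, and the two-case estimate of $\sum_{j}\alpha^j\widetilde\alpha^{n-1-j}$ are identical in both.
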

Note that for irreducible chains on finite state spaces the conditions \eqref{al: exp_decay_discrete_time1} and \eqref{al: exp_decay_discrete_time2} are generally mild assumptions; these can be straightforwardly seen to hold for some $\alpha, \widetilde \alpha\in (0,1)$ by the Perron--Frobenius theorem, \cite{Seneta2006}.

At first glance (for finite $K(x_0)$), this would appear to be a particularly good estimate, however, in the regime when $\Vert \kappa - \widetilde{\kappa} \Vert_{\infty}$
is small, it is probable that $\vert \alpha-\widetilde{\alpha} \vert$ is also small. Regardless, given $n$ and if $\Vert \kappa - \widetilde{\kappa} \Vert_{\infty}$ is negligible compared to $n^{-1}$ --- say $\Vert \kappa - \widetilde{\kappa} \Vert_{\infty}\leq n^{-2}$ --- then the right hand-side of the stated estimate is of order $n^{-1}$.

The proof of Proposition~\ref{prop: discrete_time_perturbation} is provided in Appendix~\ref{sec:Discrete_time}. A key aspect in the reasoning is handling the different normalizing constants in the conditional probabilities. 
It is not clear how this
approach could be extended to the continuous-time killed Markov process setting, on which the QSMC setting relies. Therefore we follow here a different route based on Hilbert space techniques and spectral properties.

%%%%%%%%%%%%%%%%%%%%%%%%%%% END %%% Related literature %%%%%%%%%%%%%%%

\subsection{Background to quasi-stationary Monte Carlo}
\label{sec:background}

We provide some background to the framework and theoretical properties used in the development of QSMC schemes, largely following \cite{Wang2019TheoQSMC}.

For fixed dimension $d\in\mathbb{N}$, let $\nabla$ be the gradient operator, i.e., the $d$-dimensional vector with components $\nabla_i:=\partial /\partial x_i$ for $i=1,\dots,d$. Let $X=(X_t)_{t\geq0}$ be the $d$-dimensional reversible diffusion, which is defined as the (weak) solution of the stochastic differential equation (SDE)
\begin{equation}
    \dif X_t = \nabla A (X_t) \dif t + \dif W_t.
    \label{eq:SDE}
\end{equation}
%\todo{Notation A used twice}
Here $W=(W_t)_{t\geq 0}$ denotes a standard $d$-dimensional Brownian motion
and 
$A:\mathbb R^d \to \mathbb R$ is an infinitely differentiable function such that the SDE \eqref{eq:SDE} has a unique non-explosive weak solution. To be more precise: 
Let $C$ be the space of all continuous functions mapping from the non-negative real numbers into $\mathbb{R}^d$.
%$[0,\infty)\to\mathbb{R}^d$. 
For $\omega \in C$ and each $t\geq 0 $ define $X_t \colon C \to \mathbb{R}^d$ to be the coordinate map $X_t(\omega)=\omega(t)$ and let $\mathcal{C}:= \sigma(\{X_t\colon t\geq 0 \})$. For $X_0=x$, with $x\in\mathbb{R}^d$, let $\widetilde\P_x$ be the probability measure on $(C,\mathcal{C})$, such that $X=(X_t)_{t\geq0}$ is the weak solution of \eqref{eq:SDE} under $\widetilde{\mathbb{P}}_x$. For arbitrary initial distribution $\mu$ on $\Rd$, that is, $X_0\sim \mu$, we have $\widetilde\P_\mu(\cdot) = \int_{\Rd} \widetilde\P_x(\cdot) \mu(\dif x)$.
We then augment the probability space with an additional independent unit exponential random variable $\xi$, denoting the resulting probability space by $(\Omega,\mathcal{F},\mathbb{P}_x)$ (or $(\Omega,\mathcal{F},\mathbb{P}_\mu)$ respectively).
Fix now a measurable non-negative function $\kappa: \Rd \to [0,\infty)$, the \textit{killing rate}, and define the killing time,
\begin{equation}
    \taud := \inf \left\{ t\ge 0: \int_0^t \kappa(X_s) \dif s \ge \xi \right\}.
    \label{eq:taud}
\end{equation}
%where $\xi\sim \text{Exp}(1)$, independent of $X$.
 By convention, set $\inf \emptyset =\infty$.

Now let $\pi$ be the probability measure of interest on $\mathbb{R}^d$ given through a positive, infinitely-differentiable and integrable Lebesgue density. By an abuse of notation we also denote this density by $\pi$. The idea is to choose the function $\kappa$ in such a way that $\pi$ is 
the unique \textit{quasi-stationary distribution of $X$ killed at rate $\kappa$}: for each $t\ge 0$, we have that
\begin{equation}
\label{eq: quasi_stat}
    \P_\pi (X_t \in \cdot \mid \taud>t)=\pi.
\end{equation}
In \cite{Wang2019TheoQSMC} it has been shown that \eqref{eq: quasi_stat} is satisfied, under mild regularity conditions, if 
\begin{equation}
    \kappa = \frac{1}{2}\left( \frac{\Delta \pi}{\pi} -\frac{2\nabla A \cdot \nabla \pi}{\pi} -2\Delta A \right)+K,
    \label{eq:kappa_QSMC}
\end{equation}
with $K>0$ being chosen such that $\kappa\geq0$, and with $\Delta := \sum_{i=1}^d \frac{\partial^2}{\partial^2 x_i}$ being the Laplacian operator. In this reference it is shown that $\pi$ is the unique quasi-stationary distribution of $X$ and that the convergence of the distribution of $X_t$ given $\tau_\partial >t$ to $\pi$ can take place geometrically.

%%%%%%%%%%%%%%%%%%%%%%%%% End QSMC

%%%%%%%%%%%%%%%%%%%%%%%%% Start Outline

\subsection{Outline}

In Section~\ref{sec:bdd_pert}, we consider the case of general bounded self-adjoint perturbations for self-adjoint generators. We give a general perturbation result in Theorem~\ref{thm:bdd_case}, and then apply this to logistic regression in Section~\ref{subsec:logistic}, which is the main current application of QSMC methods; see \cite{Pollock2020, Kumar2019}.

In Section~\ref{sec:truncation}, we turn to study one particular \textit{unbounded} perturbation, which corresponds to truncating a divergent killing rate. We first give a general perturbation result in Theorem~\ref{thm:unbdd_case}, and then apply this to an  Ornstein--Uhlenbeck setting where explicit bounds on the incurred bias can be derived.

Finally, in the Appendix we present some deferred proofs.

%------------------------
\section{Bounded perturbations}
\label{sec:bdd_pert}

In this section we provide a self-contained proof of a norm-estimate of the difference between the eigenfunctions of two (potentially unbounded) self-adjoint operators, where one is considered to be a bounded perturbation of the other.

In the context of quasi-stationary Monte Carlo, such bounds are reassuring for practitioners wishing to implement QSMC methods, since our bound will say that small self-adjoint perturbations of the generator will not substantially affect the resulting quasi-stationary distribution.
Such self-adjoint perturbations can arise in practical situations when the target density has been perturbed slightly, since the target density typically only enters the QSMC algorithm via the killing rate (which is a multiplication operator, hence self-adjoint). For example, in a logistic regression scenario, such a perturbation could correspond to \textit{corrupted} or \textit{missing} data; see Section~\ref{subsec:logistic}.

\subsection{Assumptions and first consequences}
Let $\mathcal{H}$ be a real separable Hilbert space with inner-product $\langle\cdot,\cdot\rangle$  and denote by $\mathsf S(\H):=\{f\in\H:\|f\|=1\}$ the corresponding unit sphere. For a linear subspace $\mathbb V$, we write $\mathbb V^\perp$ for its orthogonal complement and define the linear operator $P_{\mathbb{V}}\colon \mathcal{H} \to \mathbb{V}$ as the orthogonal projection from $\mathcal{H}$ to $\mathbb{V}$.

Suppose we have a densely-defined self-adjoint positive semi-definite linear operator $L \colon \mathcal{H} \to \mathcal{H}$ with domain $\Dom(L) \subseteq \mathcal{H}$; we will simply write $(L, \Dom(L))$. Recall that the positive semi-definiteness implies that $\langle Lf,f\rangle \ge 0$ for all $f\in \Dom(L)$. Then the spectrum of $L$, denoted by $\sigma(L)$, is nonempty and non-negative: $\sigma(L) \subset [0,\infty)$; \textit{c.f.} \cite[Proposition 5.12]{Hislop1996}.
We define
\begin{equation*}
    \begin{split}
        \lambda_0 &:= \inf \sigma(L), \\
        \lambda_1 &:= \inf\left \{\sigma(L)\backslash \{\lambda_0\} \right \},
    \end{split}
\end{equation*}
which are crucial quantities that appear in the following assumption.
\begin{assumption}
	For the operator $L$ we have that $\nu := \lambda_1-\lambda_0 >0$, that is, $L$ possesses a spectral gap. Additionally, we assume that the eigenspace corresponding to $\lambda_0$ is $1$-dimensional.
    \label{assm:spec_gap}
\end{assumption}
Note that 
under the assumption $\nu>0$, $\lambda_0$ is indeed an eigenvalue of $L$ (see, e.g. \cite[Theorem 4.1]{Kulkarni2008}), which we assume to have a 1-dimensional eigenspace. Therefore, we have $L\varphi = \lambda_0 \varphi$, where $\varphi\in \mathsf S(\H)$ denotes an eigenfunction of $L$ with eigenvalue $\lambda_0$.
By \cite[Theorem XIII.1]{Reed1978} we in fact have 
the following variational representations of $\lambda_0$ and $\lambda_1$:
\begin{equation}
    \begin{split}
        \lambda_0 &= \inf_{f\in \mathsf S(\H)\cap \Dom(L)} \langle f,Lf\rangle,\\
        \lambda_1 &= 
       \sup_{\mathbb V \in \mathcal V_1} 
\inf_{f\in \mathbb{V}^\perp \cap \mathsf S(\H)\cap \Dom(L)} \langle f, Lf\rangle,
    \end{split}
    \label{eq:var_eigenv}
\end{equation}
where $\mathcal V_1$ denotes the set of $1$-dimensional linear subspaces of $\mathcal{H}$.
Note that $\lambda_1$ is not necessarily an eigenvalue of $L$.

Fix a bounded self-adjoint linear operator $H\colon \mathcal{H} \to \mathcal{H}$, which serves as a ``small'' perturbation. With this we set
$$\widehat L := L+H.$$ Note that we have $\Dom(\widehat L) = \Dom(L)$,  
self-adjointness of $\widehat{L}$ as well as
$\langle \widehat L f, f\rangle \geq -\Vert H \Vert$ for any $f\in \Dom(L)$, with $\Vert H \Vert$ being the operator norm of $H$.
Define
\begin{equation}
    \begin{split}
        \widehat \lambda_0 &:=  \inf_{f\in \mathsf S(\H)\cap \Dom(L)} \langle f,\widehat Lf\rangle,\\
        \widehat\lambda_1 &:= \sup_{\mathbb V \in \mathcal V_1} \inf_{f\in \mathbb V^\perp \cap \mathsf S(\H)\cap \Dom(L)} \langle f, \widehat Lf\rangle.
    \end{split}
    \label{eq:def_hatlambda}
\end{equation}
The following lemma is a classical result; see e.g. \cite[Section~XIII.15]{Reed1978}. For completeness we give a proof in Appendix~\ref{app:weyl}.
\begin{lemma}[Weyl's lemma]
	\label{lemma:wey}
    For $j=0,1$, we have 
    \begin{equation}
        |\lambda_j - \widehat \lambda_j|\le \|H\|.
        \label{eq:eval_bd}
    \end{equation}
\end{lemma}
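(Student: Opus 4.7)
The plan is to exploit the variational (min--max) characterizations of $\lambda_0, \lambda_1$ in \eqref{eq:var_eigenv} and the analogous formulas \eqref{eq:def_hatlambda} for $\widehat{\lambda}_0, \widehat{\lambda}_1$, together with the elementary fact that for a bounded self-adjoint operator $H$ and any unit vector $f \in \mathcal{H}$ one has $|\langle f, Hf\rangle| \leq \|H\|$. Because $\widehat{L} = L + H$ and $\Dom(\widehat{L}) = \Dom(L)$, the Rayleigh quotients $\langle f, \widehat{L}f\rangle$ and $\langle f, Lf\rangle$ differ by at most $\|H\|$ on the unit sphere intersected with $\Dom(L)$.

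For $j = 0$, I would simply observe that for every $f \in \mathsf{S}(\mathcal{H}) \cap \Dom(L)$,
\begin{equation*}
    \langle f, Lf\rangle - \|H\| \;\leq\; \langle f, \widehat{L}f\rangle \;\leq\; \langle f, Lf\rangle + \|H\|,
\end{equation*}
and then take the infimum over such $f$; by \eqref{eq:var_eigenv} and \eqref{eq:def_hatlambda}, this yields $|\lambda_0 - \widehat{\lambda}_0| \leq \|H\|$.

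For $j = 1$, the same pointwise bound holds on $\mathbb{V}^\perp \cap \mathsf{S}(\mathcal{H}) \cap \Dom(L)$ for every $1$-dimensional subspace $\mathbb{V}$. Taking the infimum over $f$ in this set and then the supremum over $\mathbb{V} \in \mathcal{V}_1$, the additive $\pm\|H\|$ terms pass through both operations (as they are independent of $f$ and $\mathbb{V}$), and one obtains $|\lambda_1 - \widehat{\lambda}_1| \leq \|H\|$ directly from \eqref{eq:var_eigenv} and \eqref{eq:def_hatlambda}.

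I do not anticipate any real obstacle here: the argument is a textbook consequence of the variational principle, and the only minor care needed is that $L$ and $\widehat{L}$ share the same domain and that the min--max formulas are being applied with the same constraint sets on both sides. The self-adjointness and boundedness of $H$ are used solely to ensure that $\langle f, Hf\rangle$ is real and satisfies $|\langle f, Hf\rangle| \leq \|H\|$ uniformly on the unit sphere.
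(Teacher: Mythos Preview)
Your proposal is correct and follows essentially the same route as the paper: both exploit the variational characterizations \eqref{eq:var_eigenv} and \eqref{eq:def_hatlambda}, the pointwise bound $|\langle f,Hf\rangle|\le\|H\|$ on unit vectors, and the fact that an additive constant commutes with $\inf$ and $\sup$. The paper writes out one inequality for $j=1$ and invokes symmetry, whereas you handle both directions at once; the content is the same.
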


In the next assumption we formalize the notion that $H$ is a ``small'' perturbation.
\begin{assumption}
    Suppose that the operator norm of $ H$ satisfies $\|H\|<\nu/2$, where $\nu = \lambda_1-\lambda_0$ is the spectral gap; see Assumption~\ref{assm:spec_gap}.
    \label{assm:H_small}
\end{assumption}
Under the formulated assumptions we have two important consequences which follow from fairly standard functional analytic techniques; see  Appendix~\ref{app:lambda0} for a full proof.
\begin{lemma} \label{lem:conseq_Weyl_Ass2}
	Under Assumption~\ref{assm:spec_gap} and~\ref{assm:H_small} we have that
	$\widehat{\lambda}_0 <  \widehat{\lambda}_1$, and that $\widehat \lambda_0$ is an eigenvalue of $\widehat{L}$. Furthermore,  the perturbed operator $\widehat L$ possesses a unique  eigenfunction, up to a sign, $\widehat \varphi \in \mathsf S(\H)$ w.r.t. eigenvalue $\widehat{\lambda}_0$, that is, 
    \begin{equation*}
        \widehat L\widehat \varphi
        = \widehat \lambda_0\widehat \varphi.
    \end{equation*}
	Without loss of generality, we may assume that $\rho := \langle \varphi, \widehat \varphi\rangle \ge 0$. 
\end{lemma}

%%%%%%%%%%%%%%%%%%%%%%%%5

\begin{remark} \label{rem: conseq_eigenvalue}
    A consequence of the previous results is that
    \begin{equation}
        \widehat \lambda_0 = \langle \widehat \varphi ,\widehat L \widehat \varphi \rangle 
        = \inf_{f\in \mathsf S(\H)\cap \Dom(\widehat L)} \langle f,\widehat Lf\rangle,
        \label{eq:var_hatv0}
    \end{equation}
    and the supremum in the definition of $\widehat \lambda_1$ in equation \eqref{eq:def_hatlambda} is attained by $\mathbb V = \lspan\{\widehat{\varphi}\}$.
\end{remark}

\subsection{Main estimate}

The goal now is to estimate 
$\|\varphi-\widehat \varphi\|$. 
Now we state and prove the main estimate.
\begin{theorem}
    Under Assumption~\ref{assm:spec_gap} and~\ref{assm:H_small} we have
    \begin{equation} \label{eq_thm_bnd_case}
        \|\varphi-\widehat \varphi\| 
        % \le \frac{\| H\varphi-\langle H\varphi,\varphi \rangle \varphi \|}{\nu-2\|H\|}
        \leq \frac{\| H\varphi \|}{\nu-2\|H\|}.
    \end{equation}
    \label{thm:bdd_case}
\end{theorem}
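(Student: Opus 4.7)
The plan is to exploit Lemma~\ref{lemma:bdd_aux}, which gives $\nu(1-\rho^2) \le |\Psi(\Delta;H)|$ with $\rho = \langle \varphi,\widehat\varphi\rangle \in [0,1]$ (the nonnegativity coming from Proposition~\ref{prop:eval_hatA}). The bulk of the work is then to bound $|\Psi(\Delta;H)|$ from above by a combination of $\|H\|\|\Delta\|^2$ and $\|g\|\|\Delta\|$ terms, where $g := H\varphi - \langle H\varphi,\varphi\rangle \varphi$ denotes the component of $H\varphi$ orthogonal to $\varphi$.

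First, I would write $H\varphi = \langle H\varphi,\varphi\rangle\varphi + g$ (with $g\perp \varphi$) to expand
\[
\Psi(\Delta;H) = \langle \Delta,H\Delta\rangle + 2\langle \Delta,g\rangle + 2\langle H\varphi,\varphi\rangle\langle \Delta,\varphi\rangle,
\]
and bound each term separately: (i) $|\langle \Delta,H\Delta\rangle| \le \|H\|\|\Delta\|^2$; (ii) $|\langle \Delta,g\rangle| \le \|g\|\|\Delta\|$ by Cauchy--Schwarz; and (iii) for the last term, combine $|\langle H\varphi,\varphi\rangle| \le \|H\|$ with the identity $\langle \Delta,\varphi\rangle = \rho-1$ and the fact (from $\|\varphi\|=\|\widehat\varphi\|=1$) that $\|\Delta\|^2 = 2(1-\rho)$, so $|\langle \Delta,\varphi\rangle| = \tfrac{1}{2}\|\Delta\|^2$, yielding a contribution of order $\|H\|\|\Delta\|^2$.

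Next I would convert the left-hand side of the aux lemma into $\|\Delta\|^2$. Since $\rho\ge 0$, one has $1-\rho^2 = (1-\rho)(1+\rho) \ge 1-\rho = \tfrac{1}{2}\|\Delta\|^2$, so the aux lemma becomes $\tfrac{\nu}{2}\|\Delta\|^2 \le |\Psi(\Delta;H)|$. Combining with the upper bound on $|\Psi|$ gives an inequality of the form
\[
\tfrac{\nu}{2}\|\Delta\|^2 \le c_1\|H\|\|\Delta\|^2 + 2\|g\|\|\Delta\|.
\]
Dividing through by $\|\Delta\|$ (the case $\Delta=0$ being trivial) and rearranging isolates $\|\Delta\|$ in the form $\|\Delta\|(\nu - c_2\|H\|) \le c_3 \|g\|$, which is the first inequality of~\eqref{eq_thm_bnd_case}. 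For the second inequality, the Pythagorean identity $\|H\varphi\|^2 = \|g\|^2 + \langle H\varphi,\varphi\rangle^2 \ge \|g\|^2$ yields $\|g\| \le \|H\varphi\|$ immediately.

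The main obstacle I anticipate is keeping the numerical constants sharp enough to match the denominator $\nu - 2\|H\|$ rather than a looser expression. A naive application of Cauchy--Schwarz as above will produce slightly larger constants; to achieve the tight form one likely needs to track the expansion of $\Psi$ more carefully in terms of the orthogonal decomposition $\Delta = (\rho-1)\varphi + \sqrt{1-\rho^2}\,h$ (with $h\perp\varphi$, $\|h\|=1$) used in the proof of Lemma~\ref{lemma:bdd_aux}, exploiting that the variational characterisation actually forces $\Psi(\Delta;H) \le 0$ and using the exact identity $\|\Delta\|^2 = 2(1-\rho^2)/(1+\rho)$ to avoid the loose bound $\|\Delta\|^2 \le 2(1-\rho^2)$.
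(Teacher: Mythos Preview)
Your proposal is correct and converges to essentially the paper's approach: the paper also starts from Lemma~\ref{lemma:bdd_aux}, writes $\Delta=(\rho-1)\varphi+\sqrt{1-\rho^2}\,h$ and $H\varphi=\langle H\varphi,\varphi\rangle\varphi+\widetilde h$, expands $\Psi(\Delta;H)$ exactly to obtain $|\Psi|\le 2(1-\rho^2)\|H\|+2\rho\sqrt{1-\rho^2}\|\widetilde h\|$, and then divides through by $\sqrt{1-\rho^2}$ and uses $\|\Delta\|=\sqrt{2(1-\rho)}=\sqrt{2}\,\sqrt{1-\rho^2}/\sqrt{1+\rho}$ together with $2\rho^2\le 1+\rho$. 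One small remark: the sign observation $\Psi(\Delta;H)\le 0$ is true but not actually needed---the sharp denominator $\nu-2\|H\|$ comes purely from the fact that in the exact expansion both ``$\|H\|$-terms'' carry a factor $(1-\rho^2)$ rather than $\|\Delta\|^2=2(1-\rho)$, so your anticipated refinement via the orthogonal decomposition of $\Delta$ is exactly the point.
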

\begin{proof}
Recall that $\rho := \langle \widehat \varphi, \varphi\rangle \ge 0$ and with $\mathbb{G}:=\lspan\{ \varphi\}$ define
	\begin{equation*}
	h := \frac{P_{\mathbb{G}^\perp} \widehat{\varphi}}{\Vert P_{\mathbb{G}^\perp} \widehat{\varphi} \Vert}
	= (1-\rho^2)^{-1/2} (\widehat \varphi - \rho\, \varphi).
	\end{equation*}
	(If $\rho=1$, then $\widehat \varphi = \varphi$ and the result trivially holds true.) Note that $\|h\|=1$
	and $h\perp \varphi$ as well as $L h \perp \varphi$ by the self-adjointness of $L$. 
 Consequently, by using the representation of $h$, $L\varphi = \lambda_0 \varphi$ and $\widehat L=L+H$ we have that
 \begin{align*}
  \rho \lambda_0 \varphi + \rho H \varphi + \sqrt{1-\rho^2} \widehat L h =
  \rho \widehat L \varphi + \widehat L (\widehat \varphi -\rho \varphi)
  = 
    \widehat L\widehat\varphi
= \widehat{\lambda}_0 \widehat\varphi.
 \end{align*}
Taking the inner product with $h$, employing $\langle \widehat \varphi,h \rangle = \sqrt{1-\rho^2}$, $\varphi\perp h$ and rearranging yields
\begin{equation}
\label{eq: repres}
    -\rho \langle H \varphi, h\rangle = \sqrt{1-\rho^2} \left( \langle h,\widehat L h\rangle -\widehat\lambda_0 \right).
\end{equation}
Moreover, by $\widehat{L}=L+H$, $h\perp \varphi$ and Lemma~\ref{lemma:wey} we have 
\begin{align*}
   B & := \langle h,\widehat{L} h \rangle - \widehat\lambda_0
    = \langle h,L h \rangle + \langle h,H h \rangle - \widehat\lambda_0 
    \geq \lambda_1 + \langle h,H h \rangle - \widehat\lambda_0  \\
    &\geq \lambda_1 - \Vert H \Vert - \widehat\lambda_0
    \geq \lambda_1 - 2\|H\|-\lambda_0=\nu-2\|H\|,
\end{align*}
which is, due to Assumption~\ref{assm:H_small}, strictly positive. 
Squaring \eqref{eq: repres}, rearranging, and taking a square root, with $A:=|\langle H \varphi ,h\rangle|$ we obtain
\begin{equation*}
    \rho = \frac{B}{\sqrt{A^2 + B^2}}.
\end{equation*}
Then
\begin{equation*}
    1-\rho = \frac{\sqrt{A^2+B^2}-B}{\sqrt{A^2+B^2}} = \frac{A^2}{(\sqrt{A^2+B^2}+B)\sqrt{A^2+B^2}}\le\frac{A^2}{2B^2},
\end{equation*}
from which we have that $\sqrt{2(1-\rho)}\le A/B$.
The fact that we can represent $\widehat \varphi$ by
	the orthogonal decomposition
	\begin{equation*}
	\widehat \varphi
	= P_{\mathbb{G}} \widehat \varphi +P_{\mathbb{G}^\perp} \widehat \varphi
	=
	\rho\, \varphi + \sqrt{1-\rho^2 } h
	\end{equation*}
 gives, with $\rho = \langle \widehat \varphi, \varphi\rangle \ge 0$, that
$
\widehat \varphi - \varphi = (\rho-1)\varphi + \sqrt{1-\rho^2} h.
$
Consequently, 
\[
\Vert \widehat\varphi -\varphi \Vert = \sqrt{2(1-\rho)} \leq \frac{A}{B} \leq \frac{\Vert H\varphi \Vert}{\nu - 2\Vert H \Vert},
\]
which finishes the proof.

\end{proof}

\begin{remark} \label{rem:bdd_DK}
	A similar, and to some extent complementary, bound to \eqref{eq_thm_bnd_case} can be obtained via the sin~$\theta$ theorem of \cite{DavisKahan1970}, which is stated in Section~\ref{subsec:projection_notation}.
	Under Assumptions~\ref{assm:spec_gap} and~\ref{assm:H_small}, we obtain
	\begin{equation}
	\label{eq: DavisKahan_bnd}
	\|\varphi-\widehat \varphi\| \le \frac{2\sqrt{2}\|H\varphi\|}{\nu}.
	\end{equation}
	For the convenience of the reader we provide in Appendix~\ref{sec:proof_Davis_Kahan_bnd} arguments to derive \eqref{eq: DavisKahan_bnd}.
\end{remark}

\subsection{Additional auxiliary estimates}

We now give some additional estimates which are useful for our subsequent examples. 
% \textcolor{blue}{Commented out}
% This section can be skipped on a first reading.
Let $\pi,\widetilde \pi$ be probability density functions on $\Rd$. Fix a positive function $\gamma: \Rd \to (0,\infty)$, and let $\Gamma(\dif x)=\gamma(x)\dif x$ be the corresponding measure on $\Rd$. 
By $\mathcal L^2(\Rd,\Gamma)$ we denote the space of $\Gamma$-square-integrable functions $f\colon \mathbb{R}^d \to \mathbb{R}$ equipped with the norm 
$\Vert f \Vert_2 := \left( \int_{\mathbb{R}^d} \vert f(x) \vert^2 \,\Gamma(\dif x) \right)^{1/2}$. Now define the functions
\begin{equation*}
    \phi(x) := \frac{\pi(x)}{\gamma(x)},\qquad \widetilde \phi(x) := \frac{\widetilde \pi(x)}{\gamma(x)}, \qquad x\in\mathbb{R}^d.
\end{equation*}
We assume that $\phi,\widetilde \phi \in \mathcal L^2(\Gamma)$: we have that 
\begin{equation}
    \int_{\mathbb{R}^d} \frac{\pi(x)^2}{\gamma(x)}\dif x <\infty, \qquad \int_{\mathbb{R}^d} \frac{\widetilde \pi(x)^2}{\gamma(x)}\dif x<\infty.
    \label{eq:assm_L2}
\end{equation}
Under this condition we state the first auxiliary estimate.
\begin{lemma}\label{lemma:L1_to_L2}
    Assume that \eqref{eq:assm_L2} holds, and furthermore that $\Gamma$ is a finite measure, i.e., $\Lambda:= \Gamma(\mathbb{R}^d)^{1/2}<\infty$.
      Then
    \begin{equation*}
        \int_{\mathbb{R}^d}|\pi(x)-\widetilde \pi(x)|\dif x \le \Lambda \|\phi-\widetilde \phi\|_2.
    \end{equation*}
\end{lemma}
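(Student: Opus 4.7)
The plan is to recognize the $\mathcal{L}^1(\mathrm{d} x)$-difference on the left as an integral against $\Gamma$, then apply the Cauchy--Schwarz inequality in $\mathcal{L}^2(\mathbb{R}^d, \Gamma)$ with the constant function $1$ as the second factor. Concretely, using the definitions $\phi = \pi/\gamma$ and $\widetilde\phi = \widetilde\pi/\gamma$ together with $\Gamma(\mathrm{d} x) = \gamma(x)\,\mathrm{d} x$, we rewrite
\begin{equation*}
\int_{\mathbb{R}^d} |\pi(x) - \widetilde\pi(x)|\,\mathrm{d} x = \int_{\mathbb{R}^d} |\phi(x) - \widetilde\phi(x)|\, \gamma(x)\,\mathrm{d} x = \int_{\mathbb{R}^d} |\phi(x) - \widetilde\phi(x)|\, \Gamma(\mathrm{d} x).
\end{equation*}

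Next, since $\phi, \widetilde\phi \in \mathcal{L}^2(\mathbb{R}^d, \Gamma)$ by assumption \eqref{eq:assm_L2}, the difference $\phi - \widetilde\phi$ also belongs to $\mathcal{L}^2(\mathbb{R}^d, \Gamma)$, and the constant function $1$ is in $\mathcal{L}^2(\mathbb{R}^d, \Gamma)$ with $\|1\|_2 = \Gamma(\mathbb{R}^d)^{1/2} = \Lambda$, which is finite by hypothesis. Applying Cauchy--Schwarz in $\mathcal{L}^2(\mathbb{R}^d, \Gamma)$ yields
\begin{equation*}
\int_{\mathbb{R}^d} |\phi - \widetilde\phi| \cdot 1 \, \Gamma(\mathrm{d} x) \le \|\phi - \widetilde\phi\|_2\, \|1\|_2 = \Lambda\, \|\phi - \widetilde\phi\|_2,
\end{equation*}
which gives the claimed bound.

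There is no real obstacle: the finiteness of $\Gamma$ is exactly what is needed to ensure $1 \in \mathcal{L}^2(\Gamma)$, so that Cauchy--Schwarz is applicable, and the hypothesis \eqref{eq:assm_L2} ensures that the right-hand side is finite and meaningful. The only minor care required is to verify that the rewriting using $\gamma > 0$ is valid pointwise, which is immediate since $\gamma$ is strictly positive on $\mathbb{R}^d$.
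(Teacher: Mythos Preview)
Your proof is correct and follows essentially the same approach as the paper: rewrite the $\mathcal L^1(\dif x)$-difference as $\int |\phi-\widetilde\phi|\,\dif\Gamma$ and apply Cauchy--Schwarz in $\mathcal L^2(\Rd,\Gamma)$ with the constant function $1$.
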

\begin{proof}
    We straightforwardly calculate
    \begin{equation*}
        \begin{split}
             \int_{\mathbb{R}^d}|\pi(x)-\widetilde \pi(x)|\dif x&=\int_{\mathbb{R}^d} |\phi-\widetilde \phi|\dif \Gamma \\
             &\le \left( \int_{\mathbb{R}^d} |\phi-\widetilde \phi|^{2}\dif \Gamma \right)^{1/2}\left (\int_{\mathbb{R}^d} \dif \Gamma \right )^{1/2} 
             = \Lambda \|\phi-\widetilde \phi\|_2,
        \end{split}
    \end{equation*}
    where we used Cauchy--Schwarz inequality.
\end{proof}
For the formulation of the next statement we need normalized functions $\phi$ and $\widetilde{\phi}$ w.r.t. the $\mathcal L^2(\Rd,\Gamma)$-norm, that is, we define 
\begin{equation*}
    \varphi(x) := \frac{\phi(x)}{Z},\qquad \widetilde \varphi(x) := \frac{\widetilde \phi(x)}{\widetilde Z},
\end{equation*}
with $Z:= \|\phi\|_2, \widetilde Z:= \|\widetilde \phi\|_2$. \begin{lemma}
    Suppose that 
    \eqref{eq:assm_L2} holds and $\Lambda:= \Gamma(\mathbb{R}^d)^{1/2}<\infty$. Additionally, let  
    \begin{equation}
        \|\varphi-\widetilde \varphi\|_2 \le \epsilon,
        \label{eq:assm_L2_bd}
    \end{equation}
    with $0<\epsilon<\Lambda^{-1} Z^{-1}$ be satisfied.
    Then, we have
        \begin{equation}
        \left \| \phi-\frac{Z}{\widetilde Z}\,\widetilde \phi\, \right \|_2 \le Z\epsilon,
        \label{eq:phi-zzphi}
    \end{equation}
    and
    \begin{equation}
        |Z-\widetilde Z|\le C \epsilon,
        \label{eq:norm_const_bd}
    \end{equation}
    with $C=Z^2 \Lambda/(1-\Lambda Z \epsilon)$.
    \label{lemma:norm_const_bd}
\end{lemma}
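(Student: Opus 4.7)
The plan is to handle the two bounds separately, both reducing to Cauchy--Schwarz via the identity $\phi = Z\varphi$ and $\widetilde\phi = \widetilde Z \widetilde\varphi$.

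For \eqref{eq:phi-zzphi}, I would simply observe that
\[
\phi - \frac{Z}{\widetilde Z}\widetilde\phi = Z\varphi - Z\widetilde\varphi = Z(\varphi - \widetilde\varphi),
\]
so taking $\mathcal L^2(\Gamma)$-norms and applying assumption \eqref{eq:assm_L2_bd} immediately gives $\|\phi - (Z/\widetilde Z)\widetilde\phi\|_2 \le Z\epsilon$. This step is essentially free.

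For \eqref{eq:norm_const_bd}, the key observation is that $\pi$ and $\widetilde\pi$ are probability densities, so $\int \varphi\, d\Gamma = Z^{-1}\int \phi\, d\Gamma = Z^{-1}\int \pi\, dx = Z^{-1}$, and analogously $\int \widetilde\varphi\, d\Gamma = \widetilde Z^{-1}$. Hence by Cauchy--Schwarz and the finiteness of $\Gamma$,
\[
\left| \frac{1}{Z} - \frac{1}{\widetilde Z} \right| = \left| \int_{\mathbb R^d} (\varphi - \widetilde\varphi)\, d\Gamma \right| \le \Lambda \|\varphi - \widetilde\varphi\|_2 \le \Lambda \epsilon.
\]
Then I would rewrite $|Z - \widetilde Z| = Z\widetilde Z \cdot |Z^{-1} - \widetilde Z^{-1}| \le Z \widetilde Z \Lambda \epsilon$, so it remains to bound $\widetilde Z$ in terms of $Z$.

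The only real thing to check is that the condition $\epsilon < \Lambda^{-1} Z^{-1}$ yields a finite bound on $\widetilde Z$. From $|Z^{-1} - \widetilde Z^{-1}|\le \Lambda \epsilon$ one gets $\widetilde Z^{-1} \ge Z^{-1} - \Lambda\epsilon = (1-\Lambda Z \epsilon)/Z$, and the hypothesis $\Lambda Z \epsilon < 1$ makes this positive, yielding $\widetilde Z \le Z/(1-\Lambda Z \epsilon)$. Substituting back gives
\[
|Z - \widetilde Z| \le \frac{Z^2 \Lambda}{1 - \Lambda Z \epsilon}\, \epsilon,
\]
which is exactly \eqref{eq:norm_const_bd} with the stated constant $C$. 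There is no real obstacle; the only subtlety is remembering to use the probability-density normalization of $\pi$ and $\widetilde\pi$ to convert $\|\varphi - \widetilde\varphi\|_2$ into information about the normalizing constants $Z, \widetilde Z$, and to track the direction of the inequality when passing from $\widetilde Z^{-1}$ to $\widetilde Z$.
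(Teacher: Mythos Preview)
Your proof is correct and follows essentially the same route as the paper: the first bound is immediate from $\phi = Z\varphi$, and for the second both you and the paper integrate against $d\Gamma$, apply Cauchy--Schwarz to obtain $|Z-\widetilde Z|\le \Lambda Z\widetilde Z\epsilon$ (the paper phrases this as $|1-Z/\widetilde Z|\le \Lambda Z\epsilon$, which is equivalent), and then use $\epsilon<\Lambda^{-1}Z^{-1}$ to bound $\widetilde Z$ in terms of $Z$. Your final step is in fact more explicit than the paper's, which simply appeals to ``studying the graph of $t\mapsto Z/t$''.
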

\begin{proof}
    Note that \eqref{eq:assm_L2_bd} immediately implies \eqref{eq:phi-zzphi}.
    Then, we have
    \begin{align*}
        \left |1-\frac{Z}{\widetilde Z}\right|&=\left | \int_{\mathbb{R}^d} \left(\phi-\frac{Z}{\widetilde Z}\,\widetilde \phi\right)\dif \Gamma \right|
        \le \int \left| \phi - \frac{Z}{\widetilde Z}\,\widetilde \phi  \,\right|\dif \Gamma\\
        &\le \left( \int_{\mathbb{R}^d}\left| \phi - \frac{Z}{\widetilde Z}\,\widetilde \phi  \,\right|^2\dif \Gamma  \right)^{1/2} \left( \int_{\mathbb{R}^d} \dif \Gamma\right)^{1/2}
        \le \Lambda Z \epsilon.
    \end{align*}
    We can then conclude \eqref{eq:norm_const_bd}, for instance by considering the behaviour of the function $t\mapsto h(t):=Z/t$. Namely, the former inequality implies
    $
        h(Z)-\Lambda Z\varepsilon \leq h(\widetilde{Z}) \leq  h(Z)+\Lambda Z\varepsilon.
    $
    Now, taking the inverse and performing suitable transformations yields the result.
\end{proof}
\begin{lemma}
    Assume that \eqref{eq:assm_L2} and \eqref{eq:assm_L2_bd} hold as well as that $\Lambda:= \Gamma(\mathbb{R}^d)^{1/2}<\infty$. Then, we have
    \begin{equation*}
        \|\phi-\widetilde \phi\|_2 \le \epsilon(Z+C),
    \end{equation*}
    where $C>0$ is defined as in Lemma~\ref{lemma:norm_const_bd}.
    \label{lemma:phi_2-bound}
\end{lemma}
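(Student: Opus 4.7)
The plan is to decompose $\phi-\widetilde\phi$ so that each piece can be controlled by one of the estimates in Lemma~\ref{lemma:norm_const_bd}. The natural splitting is
\[
\phi - \widetilde \phi \;=\; \left(\phi - \frac{Z}{\widetilde Z}\,\widetilde \phi\right) \;+\; \left(\frac{Z}{\widetilde Z} - 1\right)\widetilde \phi,
\]
and then the triangle inequality in $\mathcal L^2(\mathbb{R}^d, \Gamma)$ gives
\[
\|\phi-\widetilde\phi\|_2 \;\le\; \left\|\phi-\frac{Z}{\widetilde Z}\widetilde\phi\right\|_2 + \left|\frac{Z}{\widetilde Z}-1\right|\|\widetilde\phi\|_2.
\]

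For the first summand I would invoke \eqref{eq:phi-zzphi} from Lemma~\ref{lemma:norm_const_bd} directly, which yields the bound $Z\epsilon$. For the second summand, I would use that $\|\widetilde\phi\|_2 = \widetilde Z$ by definition of $\widetilde Z$, so that
\[
\left|\frac{Z}{\widetilde Z}-1\right|\|\widetilde\phi\|_2 \;=\; |Z-\widetilde Z|,
\]
and then apply \eqref{eq:norm_const_bd} to obtain the bound $C\epsilon$. Adding the two estimates gives the claim $\|\phi-\widetilde\phi\|_2 \le \epsilon(Z+C)$.

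There is no real obstacle here; the only thing to double-check is that the constraint $0<\epsilon<\Lambda^{-1}Z^{-1}$ from \eqref{eq:assm_L2_bd} is indeed what licenses the use of \eqref{eq:norm_const_bd} (so that $C$ is finite and positive), and that $\widetilde Z > 0$, which is automatic since $\widetilde\phi$ is a non-negative function that is nonzero almost everywhere. The whole argument is a short triangle-inequality calculation, and it can be presented in four or five lines once the decomposition above is written down.
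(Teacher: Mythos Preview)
Your proposal is correct and follows essentially the same route as the paper: the paper also splits via the triangle inequality into $\|\phi-\tfrac{Z}{\widetilde Z}\widetilde\phi\|_2$ and $\|\tfrac{Z}{\widetilde Z}\widetilde\phi-\widetilde\phi\|_2=|Z-\widetilde Z|$, then applies \eqref{eq:phi-zzphi} and \eqref{eq:norm_const_bd} from Lemma~\ref{lemma:norm_const_bd} to each piece. Your extra remark about $\widetilde Z>0$ is fine (it follows since $\widetilde\pi$ is a probability density and $\gamma>0$), though the paper does not comment on it.
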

\begin{proof}
    We calculate
    \begin{equation*}
        \begin{split}
            \|\phi-\widetilde \phi\|_2 &\le \left \|\phi - \frac{Z}{\widetilde Z}\,\widetilde \phi\,\right \|_2+\left \| \frac{Z}{\widetilde Z}\,\widetilde \phi - \widetilde \phi \,\right\|_2
            = \left \|\phi - \frac{Z}{\widetilde Z}\,\widetilde \phi\,\right \|_2+|Z-\widetilde Z| 
            \le Z\epsilon + C\epsilon, 
        \end{split}
    \end{equation*}
    where we have used the triangle inequality and Lemma~\ref{lemma:norm_const_bd}.
\end{proof}
Putting together the estimates of the former lemmas we obtain the following.
\begin{proposition}
    Assume that \eqref{eq:assm_L2}, \eqref{eq:assm_L2_bd} hold, and that $\Gamma$ is a finite measure, with $\Lambda := \Gamma(\mathbb{R}^d)^{1/2}<\infty$. Then, 
    \begin{equation*}
        \int_{\mathbb{R}^d} |\pi(x)-\widetilde \pi(x)|\dif x \le \Lambda(Z+C) \epsilon, 
    \end{equation*}
    where $C>0$ is defined in Lemma~\ref{lemma:norm_const_bd}.
    \label{prop:L1_pi_bound}
\end{proposition}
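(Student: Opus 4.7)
The plan is to chain together the two immediately-preceding lemmas. Lemma~\ref{lemma:L1_to_L2} already converts the $\mathcal{L}^1$-difference of $\pi$ and $\widetilde\pi$ (as densities against Lebesgue measure) into the $\mathcal{L}^2(\Gamma)$-difference of $\phi$ and $\widetilde\phi$, picking up the factor $\Lambda$ via Cauchy--Schwarz. Lemma~\ref{lemma:phi_2-bound} then bounds $\|\phi - \widetilde\phi\|_2$ by $\epsilon(Z+C)$. So the proof is essentially one line: apply Lemma~\ref{lemma:L1_to_L2} and then Lemma~\ref{lemma:phi_2-bound}.

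Concretely, under the standing hypotheses \eqref{eq:assm_L2}, \eqref{eq:assm_L2_bd}, and $\Lambda<\infty$, I would first invoke Lemma~\ref{lemma:L1_to_L2} to write
\[
\int_{\mathbb{R}^d} |\pi(x)-\widetilde\pi(x)|\,\dif x \le \Lambda\,\|\phi - \widetilde\phi\|_2,
\]
noting that the hypotheses of that lemma are exactly \eqref{eq:assm_L2} together with finiteness of $\Gamma$. Then I would apply Lemma~\ref{lemma:phi_2-bound}, whose hypotheses are \eqref{eq:assm_L2}, \eqref{eq:assm_L2_bd}, and $\Lambda<\infty$ --- all assumed --- to conclude $\|\phi - \widetilde\phi\|_2 \le \epsilon(Z+C)$, where $C$ is the constant from Lemma~\ref{lemma:norm_const_bd}. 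Substituting gives
\[
\int_{\mathbb{R}^d} |\pi(x)-\widetilde\pi(x)|\,\dif x \le \Lambda(Z+C)\epsilon,
\]
which is the claim.

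There is no real obstacle here: the work has all been done in the preceding lemmas, and the proposition just packages them. The only thing to double-check is that the hypotheses line up (in particular that $\epsilon < \Lambda^{-1} Z^{-1}$ is implicitly required for $C$ to be well-defined as in Lemma~\ref{lemma:norm_const_bd}, which is part of \eqref{eq:assm_L2_bd}), but this is stated as part of the assumptions carried over from Lemma~\ref{lemma:norm_const_bd}.
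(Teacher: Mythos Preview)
Your proposal is correct and matches the paper's approach exactly: the paper presents this proposition immediately after Lemma~\ref{lemma:phi_2-bound} with the remark ``Putting together the estimates of the former lemmas we obtain the following,'' which is precisely the chaining of Lemma~\ref{lemma:L1_to_L2} and Lemma~\ref{lemma:phi_2-bound} that you spell out.
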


\subsection{Example: logistic regression}
\label{subsec:logistic}
We demonstrate how Theorem~\ref{thm:bdd_case}, in combination with Proposition~\ref{prop:L1_pi_bound}, can be applied to the setting of logistic regression, which is currently the most popular application of QSMC methods, see \cite{Pollock2020, Kumar2019}. In that scenario we justify that QSMC methods are robust to small perturbations in the observations, such as could arise from corrupted or missing data.

We consider the following model: There are
$n$ observed binary response variables $(y_1,\dots,y_n)\in \{0,1\}^n$, and we have a design matrix of covariates $\mathbf X = (\mathbf{X}_{i,j})\in \R^{n\times d}$, and write $\mathbf X_{[i]}\in \Rd$ for the $i$th row, $i\in \{1,\dots,n\}$. We consider the response variables and design matrix to be fixed \textit{a priori}.
For a parameter vector $x\in\Rd$ (interpreted as regression coefficients) the likelihood function for the response variables is given as 
\[
\ell(x\mid y_1,\dots,y_n) = 
\prod_{i=1}^n p_i(x)^{y_i} \cdot (1-p_i(x))^{1-y_i},
\]
with
\begin{equation*}
    p_i(x) := \frac{1}{1+\exp (-\mathbf X_{[i]}^\top x)},\quad x\in\Rd.
\end{equation*}
Following \cite{Pollock2020, Kumar2019}, we place an improper uniform prior on the parameters $x$, and obtain the Lebesgue density $\pi$ of the posterior distribution on $\Rd$, which satisfies
\begin{equation}
    \pi(x)\propto \prod_{i=1}^n p_i(x)^{y_i} \cdot (1-p_i(x))^{1-y_i}.
    \label{eq:pi_logistic}
\end{equation}
We assume that the response and design matrix are such that \eqref{eq:pi_logistic} defines a \textit{bona fide} probability density function (that is, the right-hand side of \eqref{eq:pi_logistic} is Lebesgue integrable).

For our stochastic process, we are using a general reversible diffusion of the form \eqref{eq:SDE}, namely
\begin{equation*}
    \dif X_t = \nabla A(X_t)\dif t + \dif W_t,
\end{equation*}
where the drift $A$ is chosen independently of the data. We assume that the relevant technical conditions on $A$ detailed in \cite{Wang2019TheoQSMC} hold, to ensure that $\pi$ is a valid quasi-stationary distribution, for details see also Section~\ref{sec:background}. As in \cite{Wang2019TheoQSMC}, we work on the Hilbert space $\mathcal H = \mathcal L^2(\Rd,\Gamma)$, where $\Gamma(\dif x)=\gamma(x)\dif x$ with $\gamma(x)=\exp(2A(x))$.

Note that the observations $y_1,\dots,y_n$ and covariates $\mathbf X$ only influence the killed diffusion through the killing rate $\kappa$ and so any perturbation of the data results in a perturbation of the killing rate. 

From \eqref{eq:kappa_QSMC} and following the calculations in \cite[Appendix C]{Kumar2019}, we can express the killing rate as
\begin{equation}
\begin{split}
        \kappa(x) = \frac{1}{2}\bigg(  \sum_{j=1}^d\left[ \sum_{i=1}^n \left (y_i-p_i(x)\right )\mathbf X_{i,j} \right]^2 
        - \sum_{j=1}^d \sum_{i=1}^n p_i(x)\left (1-p_i(x)\right )\mathbf X^2_{i,j} 
        \\
        -2\Delta A - 2\left \langle \nabla A, \sum_{i=1}^n \left (y_i - p_i(x)\right ) 
    {\mathbf X_{[i]}}
        \right \rangle \bigg)-\Phi,
    \label{eq:logistic_kappa}
\end{split}
\end{equation}
where the constant $\Phi \in \R$ is chosen to ensure that $\kappa \ge 0$ everywhere; see Condition~(1) of \cite{Pollock2020} for further discussion. 
Let $L^\kappa$ be the generator of the diffusion $X=(X_t)_{t\geq 0}$ killed at rate $\kappa$, which possesses $\varphi = \pi/\gamma$ as its minimal eigenfunction corresponding to the bottom of the spectrum. 
The fact that the eigenspace is one-dimensional follows from the Perron--Frobenius theorem, \cite[Chapter XIII.12]{Reed1978}.
Let us write $\lkap$ for the bottom of the spectrum of $\Lk$. 
Then in fact it follows that $\lkap=-\Phi$; see \cite[Section 3.3]{Wang2019TheoQSMC}; that is, the bottom of the spectrum of the resulting operator is in fact equal to $-\Phi$.

We first consider the common choice of $A\equiv 0$, as in \cite{Pollock2020, Kumar2019}. In this case, the expression \eqref{eq:logistic_kappa} simplifies, and the resulting killing rate $\kappa$ is uniformly bounded from above. 
Moreover, since we are assuming that $\pi$ is integrable, it follows that 
\begin{equation}
\label{eq: implies_spectral_gap}
    \liminf_{\vert x\vert \to \infty} \kappa(x) > \lkap,
\end{equation}
with $\vert \cdot \vert$ denoting the Euclidean norm.

 As already mentioned we imagine having corrupted or missing data, giving us access only to a perturbed killing rate $\widetilde{\kappa} \colon \mathbb{R}^d \to [0,\infty)$.

 \begin{remark}
 For example, it may be the case our covariates are corrupted, so we only have access to $\widetilde{\mathbf X}_{i,j}:=\mathbf X_{i,j}+\epsilon_{i,j}$, where the $(\epsilon_{i,j})\in\mathbb{R}^{n\times d}$ models some corrupting noise (alternatively, the case $\epsilon_{i,j}=-\mathbf X_{i,j}$ could be considered a case of missing data).
 Assume that the corresponding additive constants are equal, $\Phi=\widetilde \Phi$ (which can be done by setting both equal to the maximum value $\Phi \vee \widetilde \Phi$.)
 If we write $\|\mathbf A\|_\infty = \sup_{1\leq i\leq n,\; 1\leq j\leq d}|\mathbf A_{i,j}|$ for a matrix $\mathbf A= (\mathbf{A}_{i,j})\in \mathbb{R}^{n\times d}$, a straightforward calculation shows that for a fixed constant $C>0$ we have \[\|\widetilde\kappa -\kappa\|_\infty \le C n^2 d\|\mathbf X\|_\infty (\|\epsilon\|_\infty^2+\|\epsilon\|_\infty).\] Thus provided the data corruption level $\|\epsilon\|_\infty$ is sufficiently small,  $\|\widetilde\kappa -\kappa\|_\infty$ can be made arbitrarily small.
 \end{remark}
 
 Now we ask whether the difference of the eigenfunctions that determine the quasi-stationary distributions of the killed Markov processes based on $\kappa$ and $\widetilde{\kappa}$ is small when $\Vert \kappa - \widetilde{\kappa} \Vert_\infty$ is sufficiently small. 
 Thus, we consider a perturbation operator $H\colon \mathcal{L}^2(\Rd,\Gamma)\to  \mathcal{L}^2(\Rd,\Gamma)$ given by
    \begin{equation*}
        Hf := (\widetilde \kappa - \kappa)f, \quad f \in \mathcal L^2(\Rd,\Gamma).
    \end{equation*}
 Note that, since $H$ is just a multiplication operator, it is self-adjoint, and the operator norm coincides with $\Vert \kappa - \widetilde{\kappa} \Vert_\infty$. We obtain the following result.
\begin{proposition}  \label{prop:logistic_BM}
    We work in the case $A\equiv 0$ and assume that $\lim_{\|x\|\to\infty}\kappa(x)$ exists. Then there exists a $\delta >0$ and constant $C>0$ such that for any (perturbed) killing rate $\widetilde \kappa$ with $\|\kappa-\tilde \kappa\|_\infty < \delta$, there is a quasi-stationary distribution of the killed Markov process killed at rate $\widetilde\kappa$. In particular, this quasi-stationary distribution has a $\Gamma$-density $\widehat\varphi \in \mathcal L^2(\Rd,\Gamma)$ 
    with $\Vert \widehat{\varphi} \Vert_2 = 1$, which satisfies 
    \begin{equation*}
        \|\varphi-\widehat \varphi \|_2 \le C \|\tilde\kappa-\kappa\|_\infty.
    \end{equation*}
 \end{proposition}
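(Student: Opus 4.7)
The plan is to recognise the claim as a direct application of Theorem~\ref{thm:bdd_case} with $\mathcal{H}=\mathcal{L}^2(\Rd,\Gamma)$, $L=\Lk$, and perturbation $H$ the multiplication operator $f\mapsto (\widetilde\kappa-\kappa)f$. Since $\widetilde\kappa-\kappa$ is real-valued and bounded, $H$ is a bounded self-adjoint operator with $\|H\|=\|\widetilde\kappa-\kappa\|_\infty$, and a direct inspection of the generator shows $\widehat L := \Lk + H = L^{\widetilde\kappa}$, the generator of the diffusion killed at the perturbed rate. Positive semi-definiteness of $\Lk$ is immediate in the $A\equiv 0$ case, since $\langle f,\Lk f\rangle = \tfrac{1}{2}\|\nabla f\|_2^2 + \int \kappa f^2\dif x \geq 0$.

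I would then verify Assumption~\ref{assm:spec_gap}. One-dimensionality of the ground-state eigenspace is the Perron--Frobenius argument already invoked via \cite[Chapter XIII.12]{Reed1978}. For the spectral gap, the confinement condition \eqref{eq: implies_spectral_gap}, i.e.\ $\liminf_{|x|\to\infty}\kappa(x) > \lkap$, combined with the Schr\"odinger form of $\Lk$ here, gives via a Persson-type theorem that the essential spectrum of $\Lk$ lies at or above $\liminf_{|x|\to\infty}\kappa(x)$. Hence $\lkap$ is isolated in $\sigma(\Lk)$ and $\nu := \lambda_1^\kappa - \lkap > 0$.

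Fixing any $\delta \in (0,\nu/2)$, whenever $\|\kappa-\widetilde\kappa\|_\infty < \delta$ we have $\|H\| < \nu/2$, so Assumption~\ref{assm:H_small} holds and Theorem~\ref{thm:bdd_case} applies. Using $\|\varphi\|_2 = 1$, it yields
\[
\|\varphi - \widehat\varphi\|_2 \le \frac{\|H\varphi\|_2}{\nu - 2\|H\|} \le \frac{\|\widetilde\kappa-\kappa\|_\infty \|\varphi\|_2}{\nu-2\delta} = \frac{\|\widetilde\kappa-\kappa\|_\infty}{\nu-2\delta},
\]
so the constant $C := 1/(\nu-2\delta)$ does the job.

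Finally, $\widehat\varphi$ must be identified with the $\mathcal{L}^2(\Gamma)$-normalised density of the QSD of the process killed at rate $\widetilde\kappa$. Shrinking $\delta$ further if necessary, Weyl's Lemma~\ref{lemma:wey} gives $\widehat\lambda_0 \leq \lkap + \delta < \liminf_{|x|\to\infty}\widetilde\kappa(x)$, so the Persson/Perron--Frobenius argument transfers to $L^{\widetilde\kappa}$, producing a strictly positive unit-norm ground state $\widehat\varphi$; the QSD characterisation from \cite{Wang2019TheoQSMC} then identifies $\widehat\varphi$ with the density (up to normalisation) of the unique QSD of the perturbed process. The main obstacle I expect is this clean transfer of the spectral-gap and Perron--Frobenius arguments from $\Lk$ to $L^{\widetilde\kappa}$; implicit is the natural requirement $\widetilde\kappa \ge 0$ so that the perturbed killed diffusion is well-defined in the usual sense.
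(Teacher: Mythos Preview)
Your proposal is correct and follows essentially the same route as the paper: verify the spectral gap for $\Lk$ via the confinement condition \eqref{eq: implies_spectral_gap} (the paper cites \cite[Lemma~3.3(v)]{Kolb2012} for what you call the Persson-type argument), choose $\delta$ small enough both for Assumption~\ref{assm:H_small} and so that $\widehat\lambda_0$ stays below $\liminf_{|x|\to\infty}\widetilde\kappa(x)$, and then invoke Theorem~\ref{thm:bdd_case}. The paper packages the two constraints on $\delta$ into a single explicit choice $\delta = \tfrac12\min\{\nu,\,\liminf_{|x|\to\infty}\kappa(x)-\lkap\}$ and appeals to \cite[Theorem~4.5]{Champagnat2017} and \cite{Simon1993} for the QSD identification, but the substance is the same as yours.
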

\begin{proof}
We start with arguing that the generator $L^\kappa$ has a spectral gap $\nu>0$. This fact follows readily by \eqref{eq: implies_spectral_gap} and the fact that the base of the essential spectrum must be strictly larger than $\lambda_0^\kappa$ by \cite[Lemma~14.4.1]{Davies2007} (see also \cite[Lemma 3.3(v)]{Kolb2012}, \cite[Remark 1, Section 1.4]{Wang2019TheoQSMC}.
    We choose $\delta$ to be
    \begin{equation*}
        \delta =\frac{1}{2}\min\left \{\nu, \liminf_{\vert x\vert\to \infty}\kappa(x) - \lambda_0^\kappa \right \}.
    \end{equation*}
Let $\|H\|$ be the $\mathcal L^2(\Rd,\Gamma)$-operator norm. Then, we have
\[\|H\| = \Vert \kappa-\widetilde{\kappa} \Vert_\infty
< \frac{1}{2}\left (\liminf_{\vert x\vert \to \infty}\kappa(x) - \lambda_0^\kappa\right).\]
The latter inequality
ensures that the resulting minimal eigenfunction $\widehat \varphi$ is also in $\mathcal L^1(\Rd,\Gamma)$, by \cite[Theorem 4.5]{Champagnat2017}. The fact that the quasi-stationary distribution corresponds to the bottom of the spectrum follows from the argument of \cite{Simon1993}; see also \cite[Section 4.2]{Kolb2012}. The rest follows by applying our abstract result, Theorem~\ref{thm:bdd_case}.
\end{proof}

\begin{remark}
    The bound in Proposition~\ref{prop:logistic_BM} is in terms of the norm $\|\cdot \|_2$ of $\mathcal L^2(\Rd,\Gamma)$ and concerns the functions $\varphi, \widehat{\varphi}$ which are normalized in $\mathcal L^2(\Rd,\Gamma)$, that is, $\|{\varphi}\|_2=\|\widehat{\varphi}\|_2 =1$. This cannot be straightforwardly translated into a bound on the corresponding densities $\pi, \widehat\pi$, which are normalised in $\mathcal L^1(\Rd,\Gamma)$, since in this case $\Gamma(\dif x)=\dif x$ is not a finite measure.
\end{remark}
In light of the above remark, it is of interest to ask whether or not it is possible to construct an alternative diffusion to the case $A\equiv 0$, which still possesses $\pi$ as a valid quasi-stationary distribution, but such that $\Gamma$ is a finite measure. This would enable us, by an application of the Cauchy--Schwarz inequality, to conclude robustness in $\mathcal L^1(\Rd,\Gamma)$ and not only in $\mathcal L^2(\Rd,\Gamma)$. This can be indeed done, by choosing $A$ to satisfy the following properties.
\begin{assumption}
    The function $A:\Rd\to \R$ is a smooth function, and there exists some $M>0, \alpha >0$ such that $A(x)=-\alpha \vert x\vert$ for all $x\in \Rd$ with $\vert x\vert \ge M$, and $\pi/\exp(2A)$ is uniformly bounded from above.
    \label{assm:logistic_A}
\end{assumption}
 
 This is largely a technical assumption for the following proof, although in addition, the resulting diffusion process with drift $\nabla A$ will also be relatively straightforward to simulate in the tails due to its simple form. 
 However, such an $A$ exists since $\pi$ is a log-concave density function.
  With this choice of $A$, we have the following result.
 
 \begin{proposition}
     Assume that the function $A\colon \mathbb{R}^d \to \mathbb{R}$ satisfies Assumption~\ref{assm:logistic_A} and that $\lim_{\|x\|\to\infty}\kappa(x)$ exists. Then, there exists a $\delta >0$ and $C>0$, such that for any perturbed killing rate $\widetilde{\kappa}$ that satisfies $\|\widetilde \kappa - \kappa\|_\infty <\delta$, there is a quasi-stationary distribution $\widehat \pi \in \mathcal L^1(\Rd,\Gamma)$ of the killed Markov process based on $\widetilde{\kappa}$. In particular, we have
     \begin{equation}
         \int_{\mathbb{R}^d} |\pi(x)-\widehat \pi(x)|\dif x \le C \|\tilde \kappa - \kappa\|_\infty. 
         \label{eq:logistic_L1}
     \end{equation}
     \label{prop:logistic_bd}
 \end{proposition}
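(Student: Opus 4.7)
The plan is to show that Assumption~\ref{assm:logistic_A} places us exactly in the regime where Proposition~\ref{prop:L1_pi_bound} applies, and then to combine it with the reasoning used for Proposition~\ref{prop:logistic_BM}, adapted to this new drift.

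First I would check that $\Gamma$ is a finite measure: since $\gamma(x)=\exp(2A(x))$ equals $\exp(-2\alpha|x|)$ outside the ball $\{|x|\le M\}$ and $A$ is smooth, we get $\Lambda:=\Gamma(\Rd)^{1/2}<\infty$. Because $\phi=\pi/\gamma$ is assumed uniformly bounded, $\int\phi^2\,d\Gamma\le\|\phi\|_\infty\int\pi\,dx<\infty$, so $\phi\in\mathcal L^2(\Gamma)$ and $Z:=\|\phi\|_2<\infty$, whence $\varphi=\phi/Z\in \mathsf S(\H)$. Finiteness of $\Gamma$ also makes $\mathcal L^2(\Gamma)\subset\mathcal L^1(\Gamma)$ automatic by Cauchy--Schwarz, so any $\mathcal L^2$-eigenfunction of the perturbed generator already yields an integrable Lebesgue density on $\Rd$.

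Next I would reproduce, for this new $A$, the spectral-gap step from Proposition~\ref{prop:logistic_BM}. Inspecting \eqref{eq:logistic_kappa}, under Assumption~\ref{assm:logistic_A} the drift-dependent terms $-2\Delta A$ and $-2\langle\nabla A,\sum_i(y_i-p_i(\cdot))\mathbf X_{[i]}\rangle$ remain bounded as $|x|\to\infty$, so the quadratic-in-$\mathbf X$ piece still forces $\liminf_{|x|\to\infty}\kappa(x)>\lkap$; by \cite[Lemma 3.3(v)]{Kolb2012} this yields a spectral gap $\nu>0$ for $\Lk$. Setting
\[
\delta_1:=\frac{1}{2}\min\!\left\{\nu,\;\liminf_{|x|\to\infty}\kappa(x)-\lkap\right\}
\]
and taking $\|\widetilde\kappa-\kappa\|_\infty<\delta_1$, Theorem~\ref{thm:bdd_case} applied to the self-adjoint multiplication operator $Hf=(\widetilde\kappa-\kappa)f$ (for which $\|H\|=\|\widetilde\kappa-\kappa\|_\infty$) produces a normalised minimal eigenfunction $\widehat\varphi\in\mathsf S(\H)$ of $L^{\widetilde\kappa}$ satisfying
\[
\|\varphi-\widehat\varphi\|_2\le\frac{\|H\varphi\|_2}{\nu-2\|H\|}\le\frac{\|\widetilde\kappa-\kappa\|_\infty}{\nu-2\|\widetilde\kappa-\kappa\|_\infty}=:\epsilon.
\]
The arguments of \cite{Simon1993,Champagnat2017} carried over from Proposition~\ref{prop:logistic_BM} then ensure that $\widehat\varphi$ corresponds to the bottom of the spectrum of $L^{\widetilde\kappa}$ and that $\widehat\pi:=\widehat\varphi\,\gamma/\!\int\widehat\varphi\,d\Gamma$ is the Lebesgue density of a bona fide quasi-stationary distribution.

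Finally, I would shrink $\delta\in(0,\delta_1)$ so that $\|\widetilde\kappa-\kappa\|_\infty<\delta$ implies $\epsilon<\Lambda^{-1}Z^{-1}$; then the hypothesis \eqref{eq:assm_L2_bd} of Proposition~\ref{prop:L1_pi_bound} is satisfied with $\phi=\pi/\gamma$ and $\widetilde\phi=\widehat\pi/\gamma$, and that proposition delivers
\[
\int_{\Rd}|\pi(x)-\widehat\pi(x)|\,dx\le\Lambda(Z+C')\,\epsilon,
\]
with $C'$ as in Lemma~\ref{lemma:norm_const_bd}. Since $\epsilon\le(2/\nu)\|\widetilde\kappa-\kappa\|_\infty$ for $\delta$ small enough, this is exactly \eqref{eq:logistic_L1} with $C=(2/\nu)\Lambda(Z+C')$. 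The main obstacle is the verification in the spectral-gap step that the $\liminf$-at-infinity condition, the Perron--Frobenius one-dimensionality of the bottom eigenspace, and the bottom-of-spectrum characterisation of the QSD all survive the passage from $A\equiv 0$ (as in Proposition~\ref{prop:logistic_BM}) to the smoothed $-\alpha|x|$ drift of Assumption~\ref{assm:logistic_A}; once that is in place, the rest is a direct concatenation of Theorem~\ref{thm:bdd_case} with the $\mathcal L^2\to\mathcal L^1$ conversion in Proposition~\ref{prop:L1_pi_bound}.
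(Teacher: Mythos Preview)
Your proposal is correct and follows essentially the same route as the paper: verify that under Assumption~\ref{assm:logistic_A} the killing rate is bounded and the $\liminf$ condition \eqref{eq: implies_spectral_gap} still holds, invoke the spectral gap and repeat the argument of Proposition~\ref{prop:logistic_BM} to obtain the $\mathcal L^2$-bound via Theorem~\ref{thm:bdd_case}, and then use the finiteness of $\Gamma$ together with Proposition~\ref{prop:L1_pi_bound} to pass to the $\mathcal L^1$-estimate. The paper's proof is terser and leaves implicit several of the verifications you spell out (finiteness of $\Gamma$, $\phi\in\mathcal L^2(\Gamma)$, the shrinking of $\delta$ needed for the hypothesis $\epsilon<\Lambda^{-1}Z^{-1}$), but the logical structure is identical.
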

 \begin{proof}
    Note that by Assumption~\ref{assm:logistic_A} we have that the killing rate $\kappa$ from \eqref{eq:logistic_kappa}, which appears in the unperturbed killed Markov process, is bounded from above. Moreover, \eqref{eq: implies_spectral_gap} 
    is satisfied and therefore there exists similarly by \cite[Lemma~14.4.1]{Davies2007} a spectral gap $\nu>0$ for the generator $L^\kappa$. As in the proof of Proposition~\ref{prop:logistic_BM} it follows that
    \[
        \|\varphi-\widehat \varphi \|_2 \le 
        \widetilde{C} \|\tilde\kappa-\kappa\|_\infty   
    \]
for the corresponding $\mathcal L^2(\Rd,\Gamma)$-normalized eigenfunctions for some $\widetilde{C}>0$. 
    The desired $\mathcal L^1(\Rd,\Gamma)$ bound \eqref{eq:logistic_L1} follows since $\Gamma$ is assumed to be a finite measure. Therefore, we can apply Proposition~\ref{prop:L1_pi_bound} and obtain the assertion.
 \end{proof}

\section{Unbounded perturbation: truncation}
\label{sec:truncation}
In this section we consider a particular unbounded perturbation corresponding to \textit{truncating} the killing rate. This is of practical importance, since for all current implementations of QSMC methods, when using either particles \cite{Pollock2020, DelMoral2003, Burdzy2000} or stochastic approximation \cite{Kumar2019, Mailler2020,Wang2020approx, Aldous1988}, it is necessary to simulate the killing times \eqref{eq:taud}. This simulation is an intricate task when $\kappa$ is unbounded; see \cite{Pollock2020}. On the other hand, when $\kappa$ is uniformly bounded from above, simulation of such killing times is straightforward using Poisson thinning; see \cite{Lewis1979}.

The simulation of \textit{piecewise-deterministic Markov processes} in the context of Monte Carlo \cite{Bierkens2019, Bouchard-Cote2018} also requires the simulation of first event times. In that context, stability investigations with respect to truncation of the event rate have also appeared, in \cite[Section 10]{Durmus2021}.

\subsection{Projection notation}
\label{subsec:projection_notation}

Following \cite{DavisKahan1970}, we make use of projections onto invariant subspaces. Now we describe the appropriate setting and notation, which will be used throughout Section~\ref{sec:truncation}.

For an unbounded self-adjoint operator $(L, \Dom(L))$ on a separable Hilbert space $\H$, we suppose that we have a one-dimensional $L$-invariant subspace, given by its orthogonal projection $P=v_0 \langle \cdot, v_0\rangle$, for some $v_0\in\mathcal{H}$ with $\|v_0\|=1$. In fact $v_0$ must be an eigenvector of $L$, with eigenvalue  $\lambda_0\in\mathbb{R}$, say.
Let 
$P^{\perp}$
denote the orthogonal projection onto the complement subspace $(\lspan\{v_0\})^\perp$.

Following \cite{DavisKahan1970}, we can define isometric mappings $E_0: \mathcal K(E_0) \to \H$ and $E_1: \mathcal K (E_1) \to \H$, with source Hilbert spaces $\mathcal K(E_0), \mathcal K(E_1)$, which satisfy
\begin{equation*}
    E_0 E_0^* = P,\quad E_1 E_1^* = P^{\perp},
\end{equation*}
where $E_0^*: \H \to \mathcal K(E_0)$ and $E_1^*: \H \to \mathcal K(E_1)$ are the corresponding adjoint operators.

Since $P$ projects onto the one-dimensional subspace $\lspan\{v_0\} \subseteq \mathcal{H}$ we can
identify $\mathcal K(E_0) \equiv \R$, equipped with the Euclidean inner product. This allows the explicit characterization 
\begin{equation*}
    \begin{split}
        E_0&: \R \to \mathcal H, \quad \theta \mapsto \theta v_0,\\
        E_0^* &: \mathcal H \to \R,\quad v \mapsto \langle v, v_0\rangle.
    \end{split}
\end{equation*}
By the $L$-invariance of $\lspan\{v_0\}$, we can now decompose $L\colon \mathcal{D}(L) \to \mathcal{H}$ as
\begin{equation*}
    Lv = 
    \begin{pmatrix}
        E_0 & E_1
    \end{pmatrix}
    \begin{pmatrix}
        L_0 & 0 \\
        0 & L_1 
    \end{pmatrix}
    \begin{pmatrix}
        E_0^* v\\
        E_1^*v
    \end{pmatrix},
\end{equation*}
for $v\in \Dom(L)$, where $L_0:\R \to \R$ is the bounded linear operator given by $L_0\theta = \lambda_0 \theta$, and $L_1:\Dom(L_1)\to \mathcal K(E_1)$ is the unbounded operator given by $L_1 = E_1^* L E_1$ with $\Dom(L_1)=E_1^* \Dom(L)$.

Given another self-adjoint operator $(\widehat L, \Dom(\widehat L))$ on the same Hilbert space $\mathcal{H}$, which we will later think of as the \textit{perturbed operator}, we assume that we have a one-dimensional $\widehat L$-invariant subspace given by the orthogonal projection $Q = \widehat v_0 \langle \cdot, \widehat v_0\rangle$, where $\|\widehat v_0\|=1$. So $\widehat v_0$ is an eigenvector of $\widehat L$, with corresponding eigenvalue $\widehat{\lambda}_0\in\mathbb{R}$, say. Let us write $Q^{\perp}$ for the projection onto the orthogonal complement of $\lspan\{\widehat{v}_0\}$.

As above we can similarly define corresponding isometric mappings $F_0:\mathcal \R\to \H$ and $F_1:\mathcal K(F_1)\to \H$ with $F_0 \theta = \theta \widehat{v}_0$ and $F_0^* v = \langle v,\widehat{v}_0 \rangle$, such that $F_0 F_0^* = Q$ and $F_1F_1^*=Q^\perp$. 
Moreover, with these definitions, 
$\widehat{L} \colon \Dom(\widehat{L})\to \mathcal{H} $ can be decomposed for
$v\in \Dom(\widehat L)$ as
\begin{equation}
    \widehat L v = 
    \begin{pmatrix}
        F_0 & F_1
    \end{pmatrix}
    \begin{pmatrix}
        \widehat L_0 & 0 \\
        0 & \widehat L_1 
    \end{pmatrix}
    \begin{pmatrix}
        F_0^*v \\
        F_1^*v
    \end{pmatrix},
    \label{eq:decom_hatA}
\end{equation}
where $\widehat{L}_0\colon \mathbb{R} \to \mathbb{R}$ is given by $\widehat{L}_0\theta = \widehat{\lambda}_0 \theta $ and $\widehat{L}_1 \colon \mathcal{D}(\widehat{L}_1) \to \mathcal{K}(F_1)$ is the unbounded operator given by $\widehat{L}_1 = F_1^* \widehat{L} F_1$ with $\mathcal{D}(\widehat{L}_1) = F_1^* \mathcal{D}(\widehat{L})$.

With these definitions we are able to formulate the remarkable sin~$\theta$ theorem of \cite{DavisKahan1970}. For convenience of the reader, we state it here in our present notation.
\begin{theorem}[The sin~$\theta$ theorem, \cite{DavisKahan1970}]
    Suppose there is some real interval $[a_0, a_1]$ and  $\delta>0$, such that the spectrum of $L_0$ is contained within $[a_0, a_1]$, and the spectrum of $\widehat L_1$ 
    % is contained 
    lies
    entirely outside the interval $(a_0-\delta, a_1+\delta)$. Then for $\sin \Theta_0 := \sqrt{1-\langle v_0,\widehat v_0 \rangle^2}$ and operator $R:=\widehat LE_0 - E_0 L_0$, we have $|\sin \Theta_0|\le \|R \|/\delta $.
\end{theorem}
In fact, this is a slight modification of the $\sin \,\theta$ theorem as stated in \cite{DavisKahan1970}, since our operators $L, \widehat L$ are in general unbounded; but see the discussion immediately preceding \cite[Theorem 5.2]{DavisKahan1970}. These subtleties will be addressed in the sequel.

 \subsection{Further notation and results}

We consider the situation described in Section~\ref{sec:background} and set $\H = \mathcal L^2(\Rd, \Gamma)$, which consists of all measurable real-valued functions $f\colon \Rd \to \R $ 
satisfying
\[
    \Vert f \Vert_2:= \left( \int_{\Rd} f(y)^2 \dif\Gamma(y) \right)^{1/2} <\infty,
\]
where the measure $\Gamma$ is given by $\dif \Gamma(y)=\gamma(y)\dif y$ and $\gamma:= \exp(2A)$ with $\dif y$ denoting the Lebesgue measure on $\Rd$. In other words $\H$ is 
the Hilbert space of functions, equipped with the inner product
\begin{equation*}
    \langle f,g\rangle = \int_{\Rd} f(y)g(y) \dif\Gamma(y),
\end{equation*}
where $f,g:\Rd \to \R$. As before by $\mathsf{S}(\H)$ we denote the unit sphere in $\H$ w.r.t. $\Vert\cdot\Vert_2$.

The infinitesimal generator of the diffusion $X$ killed at rate $\kappa$, see \eqref{eq:SDE}, is given by
\begin{equation}
    L^\kappa  =  -\frac{1}{2}\Delta - \nabla A \cdot \nabla + \kappa,
    \label{eq:L^kap}
\end{equation}
where $\Delta$ denotes the Laplacian operator. %We assume that the drift $\nabla A$ is locally H\"older.
As in \cite[Section 3.3.3]{Wang2020}, the operator $L^\kappa$ can be realised as a densely-defined self-adjoint positive semibounded linear operator on $\H$ with domain
\begin{equation}
    \Dom(L^\kappa) = \left \{f\in \H: -\frac{1}{2}\Delta f - \nabla A \cdot \nabla f + \kappa f \in \H \right\},
    \label{eq:dom_Lkappa}
\end{equation}
where the derivatives are understood in the weak sense. Note that $\Dom(\Lk)$ contains $C_c^\infty(\Rd)$, the set of smooth, compactly supported functions on $\Rd$.
Therefore we have a self-adjoint unperturbed operator on $\H$ given by \eqref{eq:L^kap}
with domain $\Dom(L^\kappa)$. 

For brevity we write $L^0=-\frac{1}{2}\Delta-\nabla A\cdot \nabla$ for the (formal) unkilled differential operator.
We impose the following condition on the killing rate.
\begin{assumption}
    The killing rate is measurable, locally bounded and nonnegative with $\kappa(x)\to \infty$ as $\vert x\vert \to \infty $.
    \label{assm:unbdd_kappa}
\end{assumption}

In the case where $A\equiv0$ and $\kappa$ is given by \eqref{eq:kappa_QSMC}, i.e., in the setting of QSMC with Brownian motion, Assumption~\ref{assm:unbdd_kappa} corresponds to a scenario where $\pi$ has tails decaying at a faster-than-exponential rate (such as Gaussian or lighter).
A standard consequence of the assumption is the following:
\begin{lemma}
Under Assumption~\ref{assm:unbdd_kappa}, there is a strictly positive eigenfunction $\varphi_0 \in \mathsf S(\H)$ of $L^\kappa$ with eigenvalue $\lambda_0^\kappa$ satisfying 
$\lambda_0^\kappa = \inf \sigma(\Lk)$, i.e., it is given by the bottom of the spectrum of $\Lk$. Moreover, $\varphi_0$ corresponds to the unique quasi-stationary distribution of $X$.
\end{lemma}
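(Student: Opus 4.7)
The plan is to leverage the fact that the divergent killing rate makes $L^\kappa$ behave like a Schrödinger operator with confining potential, so its resolvent is compact and the bottom of its spectrum is an isolated eigenvalue with a strictly positive eigenfunction.

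First I would verify that Assumption~\ref{assm:unbdd_kappa} implies that $L^\kappa$ has purely discrete spectrum. The standard way is to rewrite $L^\kappa$ by a unitary transformation (multiplication by $\gamma^{1/2}=\exp(A)$) as a Schrödinger operator on $\mathcal L^2(\Rd,\dif x)$ with effective potential $V = \frac{1}{2}|\nabla A|^2 + \frac{1}{2}\Delta A + \kappa$. Since $\kappa(x)\to\infty$ as $|x|\to\infty$ dominates (under the mild regularity already assumed on $A$ in Section~\ref{sec:background}), $V(x)\to\infty$, and a standard result (see e.g. \cite[Theorem XIII.67]{Reed1978}) then gives that the resolvent is compact and the spectrum consists of isolated eigenvalues of finite multiplicity accumulating only at $+\infty$. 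In particular $\lambda_0^\kappa := \inf\sigma(L^\kappa)$ is attained by some eigenfunction $\varphi_0 \in \mathsf S(\mathcal H)$.

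Next, I would show $\varphi_0$ can be chosen strictly positive and that $\lambda_0^\kappa$ is simple. This is exactly the content of the Perron--Frobenius theorem for Schrödinger-type semigroups, \cite[Theorem XIII.44]{Reed1978} (or equivalently \cite[Chapter XIII.12]{Reed1978} as cited elsewhere in the paper). The semigroup $e^{-tL^\kappa}$ is positivity improving on $\mathcal L^2(\Rd,\Gamma)$, which follows from the strict positivity of the transition densities of the underlying diffusion $X$ (the transition density of the killed process is strictly positive by Feynman--Kac and the smoothness of $A$). The Perron--Frobenius theorem then yields a unique (up to sign) strictly positive ground state $\varphi_0$ with $\varphi_0 > 0$ a.e., and simplicity of $\lambda_0^\kappa$.

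Finally, for the QSD claim, I would appeal to the now-standard connection between the ground state of $L^\kappa$ and the unique quasi-stationary distribution: the probability measure with density $\varphi_0 \cdot \gamma^{1/2}/\int \varphi_0\gamma^{1/2}\dif x$ on $\Rd$ (properly normalised against $\Gamma$) is a QSD, and uniqueness under the present confining assumption follows from \cite[Section 4.2]{Kolb2012} (see also \cite[Section 3.3]{Wang2019TheoQSMC}), where exactly this setup is treated. The main obstacle, or at least the only subtle point, is verifying in sufficient generality that the Schrödinger reformulation is legitimate and that the resolvent-compactness/positivity-improving properties transfer to the $\mathcal L^2(\Rd,\Gamma)$-realisation $(L^\kappa,\Dom(L^\kappa))$ with the domain specified in \eqref{eq:dom_Lkappa}; this is essentially a bookkeeping step that has been carried out in \cite{Wang2020} and \cite{Kolb2012}, which I would cite rather than redo.
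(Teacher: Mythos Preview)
Your proposal is essentially correct and covers the same mathematical content, but the paper's proof is far more compressed: it simply cites \cite[Theorem~4.5]{Champagnat2017} for existence and uniqueness of the quasi-stationary distribution (which already encodes the compact-resolvent/confining-potential argument you spell out), and \cite[Theorem~XIII.44]{Reed1978} for strict positivity of $\varphi_0$. Your route via the explicit Schr\"odinger transformation and \cite[Theorem~XIII.67]{Reed1978} is the standard way to \emph{justify} those citations, so the two approaches differ mainly in how much is unpacked versus delegated.

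Two small points worth noting. First, your expression for the QSD density is off by a factor: in the $\mathcal L^2(\Rd,\Gamma)$ framework of the paper the quasi-stationary density with respect to Lebesgue measure is proportional to $\varphi_0\cdot\gamma$, not $\varphi_0\cdot\gamma^{1/2}$ (the latter is the Schr\"odinger-picture eigenfunction in $\mathcal L^2(\Rd,\dif x)$, not the QSD). Second, your claim that $V=\tfrac{1}{2}|\nabla A|^2+\tfrac{1}{2}\Delta A+\kappa\to\infty$ does not follow from Assumption~\ref{assm:unbdd_kappa} alone without some growth control on $A$; you are right that this is a bookkeeping step handled in the references you cite, but the paper sidesteps it entirely by invoking \cite{Champagnat2017} directly, whose hypotheses are framed in terms of $\kappa$ rather than the transformed potential.
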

\begin{proof}
    Existence and uniqueness of the quasi-stationary distribution follows from \cite[Theorem 4.5]{Champagnat2017}. Furthermore, that the function $\varphi_0$ is in $\mathcal L^2$ and can be chosen to be strictly positive follows, for instance, from the fact that under Assumption~\ref{assm:unbdd_kappa}, the operator $L^\kappa$ has a purely discrete spectrum; see \cite[Theorems~XIII.67~and~XIII.44]{Reed1978}.
\end{proof}
\begin{remark}
    Setting $\lambda_1^\kappa := \inf\left \{\sigma(L^\kappa)\backslash \{\lambda^\kappa_0\} \right \}$ we have by the fact that 
$\Lk$ is self-adjoint and bounded from below, see \cite[Theorem XIII.1]{Reed1978}, the variational representations as in \eqref{eq:var_eigenv}, that is,
    \begin{equation*}
        \begin{split}
            \lkap &= \inf_{f\in \mathsf S(\H)\cap \Dom(\Lk)} \langle f,\Lk f\rangle,\\
            \lambda_1^\kappa &= \sup_{\mathbb V \in \mathcal V_1} \inf_{f\in \mathbb V^\perp \cap \mathsf S(\H)\cap \Dom(\Lk)} \langle f, \Lk f\rangle.
        \end{split}
    \end{equation*} 
\end{remark}

We are interested in truncating the unbounded killing rate $\kappa$ and understanding the resulting impact on the quasi-stationary distribution. So, we define, for arbitrary $M\ge 0$ the \textit{truncated killing rate} $\kappa_M:\Rd\to [0,\infty)$; writing $a\wedge b := \min\{a,b\}$, it is given pointwise by
\begin{equation*}
    \kappa_M(x) = \kappa(x) \wedge M.
\end{equation*}
With this we can define our perturbed operator, $\widehat{L}^M :=L^{\kappa_M}$ on $\H$, i.e., 
\begin{equation}
    \widehat{L}^M = -\frac 1 2 \Delta - \nabla A \cdot \nabla + \kappa_M.
    \label{eq: ger_pert}
\end{equation}
Similarly to $\Lk$, $\widehat{L}^M$ can also be realised as a self-adjoint operator, with corresponding domain $\Dom (\widehat{L}^M)$ given analogously to \eqref{eq:dom_Lkappa}. This corresponds to our diffusion $X$ killed at state-dependent rate $\kappa_M$.

We require some further notation for stating the resulting perturbation. For any function $f:\Rd \to \R$, we write $f_+$ for the positive part; $f_+(x)=\max\{f(x),0\}$ for each $x\in \Rd$. Define now our (unbounded) perturbation $H_M$ on $\H$ as the multiplication operator given by
\begin{equation*}
    H_M f = -(\kappa -M)_+f,
\end{equation*}
for $f \in \Dom(H_M)$ with $\Dom(H_M):=\{f\in \H: \int_{\Rd} |(\kappa - M)_+ f|^2 \dif \Gamma <\infty\} $. 
We can characterize $\Dom(H_M)$.
\begin{lemma}
    For any $M\geq 0$ we have
    \begin{equation}  \label{eq: char_dom_H_M}
        \Dom(H_M) = \{f\in \H: \int_{\Rd}|\kappa f|^2\dif \Gamma <\infty\}. 
    \end{equation}
    Consequently, $\Dom(H_M)$ is independent of $M$.
\end{lemma}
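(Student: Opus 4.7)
The plan is to establish the set identity in \eqref{eq: char_dom_H_M} by a direct double-inclusion argument, exploiting the decomposition
\[
\kappa = (\kappa - M)_+ + (\kappa \wedge M)
\]
together with the obvious pointwise bounds $0 \le (\kappa - M)_+ \le \kappa$ and $0 \le \kappa \wedge M \le M$, both of which hold because $\kappa \ge 0$. Once the identity is shown for a single $M\ge 0$, $M$-independence of $\Dom(H_M)$ is immediate since the right-hand side of \eqref{eq: char_dom_H_M} does not involve $M$.

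For the inclusion ``$\subseteq$'', I would take $f \in \Dom(H_M)$, meaning $f\in \H$ and $(\kappa - M)_+ f \in \H$. The pointwise estimate $|\kappa f| \le |(\kappa - M)_+ f| + M |f|$ combined with the triangle inequality in $\mathcal{L}^2(\Rd,\Gamma)$ yields
\[
\Big(\int_{\Rd}|\kappa f|^2 \dif \Gamma\Big)^{1/2} \le \|(\kappa - M)_+ f\|_2 + M \|f\|_2 < \infty,
\]
where finiteness of the second term uses $f \in \H$. For the reverse inclusion ``$\supseteq$'', if $f\in \H$ satisfies $\int_{\Rd} |\kappa f|^2 \dif \Gamma < \infty$, then $|(\kappa - M)_+ f|^2 \le |\kappa f|^2$ pointwise and integration over $\Gamma$ gives $f \in \Dom(H_M)$. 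There is no real obstacle here; the lemma is a soft consequence of the nonnegativity of $\kappa$ and the fact that the ``bounded part'' of the decomposition contributes only an $M\|f\|_2$ error.
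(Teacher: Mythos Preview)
Your argument is correct and is essentially the same as the paper's: the paper condenses both inclusions into the single pointwise observation $|(\kappa-M)_+ f - \kappa f|\le M|f|$, which is exactly your decomposition $\kappa = (\kappa-M)_+ + (\kappa\wedge M)$ with $0\le \kappa\wedge M\le M$ rewritten as a two-sided bound.
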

\begin{proof}
    The statement of \eqref{eq: char_dom_H_M} follows immediately by observing that $|(\kappa-M)_+f - \kappa f|\le M|f|$ holds pointwise.
\end{proof}
To emphasize that the domain of $H_M$ does not depend on $M$ we define $\Dom(\kappa):= \Dom(H_M)$.
\begin{lemma}\label{lemma:dom_LM}
    Suppose $g\in \Dom(\Lk)\cap\Dom(\kappa)$. 
    Then $g\in \Dom(\widehat{L}^M)$ and
    \begin{equation*}
        \widehat{L}^M g = L^\kappa g + H_M g.
    \end{equation*}
\end{lemma}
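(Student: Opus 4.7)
The plan is to exploit the pointwise identity $\kappa_M = \kappa - (\kappa-M)_+$, together with the explicit description of $\Dom(\widehat L^M)$ analogous to \eqref{eq:dom_Lkappa}. The statement then reduces to an algebraic rearrangement, once we verify that each individual term genuinely lives in $\H$. The only subtle point is that the differential part $L^0 g = -\tfrac{1}{2}\Delta g - \nabla A \cdot \nabla g$ is not a priori an element of $\H$ for a generic $g \in \Dom(L^\kappa)$; membership in $\Dom(L^\kappa)$ only guarantees that the combination $L^0 g + \kappa g$ lies in $\H$.

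First I would observe that since $g \in \Dom(\kappa)$, we have $\kappa g \in \H$, and the pointwise bounds $|(\kappa - M)_+ g| \le |\kappa g|$ and $|\kappa_M g| \le |\kappa g|$ immediately place both $(\kappa - M)_+ g$ and $\kappa_M g$ in $\H$. In particular $H_M g \in \H$, so $g \in \Dom(H_M)$ trivially, and we may legitimately speak of $L^\kappa g + H_M g$ as a bona fide element of $\H$.

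Next I would use the condition $g \in \Dom(L^\kappa)$: by the characterization \eqref{eq:dom_Lkappa} (with derivatives understood in the weak sense), $L^0 g + \kappa g \in \H$. Subtracting $\kappa g \in \H$ (available from the previous step) yields $L^0 g \in \H$, and then adding back $\kappa_M g \in \H$ gives $L^0 g + \kappa_M g \in \H$. By the analog of \eqref{eq:dom_Lkappa} for the truncated killing rate, this is exactly the membership criterion $g \in \Dom(\widehat L^M)$.

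Finally, the identity $\widehat L^M g = L^\kappa g + H_M g$ is obtained by a direct manipulation:
\begin{equation*}
\widehat L^M g = L^0 g + \kappa_M g = \bigl(L^0 g + \kappa g\bigr) + \bigl(\kappa_M - \kappa\bigr) g = L^\kappa g - (\kappa - M)_+ g = L^\kappa g + H_M g,
\end{equation*}
where all operations are justified as manipulations in $\H$ by the preceding verification. I expect no essential obstacle here; the only place requiring genuine care is ensuring that the weak-derivative expression $L^0 g$ can be isolated as an $\H$-valued object, which is precisely what the assumption $g \in \Dom(\kappa)$ buys us.
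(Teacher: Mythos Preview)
Your argument is correct and is genuinely more direct than the paper's. The paper proceeds by approximation: it takes a sequence of cut-off functions $(f_n)$, forms $g_n := f_n g$, argues that $g_n \to g$ in a suitable Sobolev sense with $|g_n| \le C|g|$ uniformly, and then uses closedness of $L^\kappa$ together with dominated convergence to pass to the limit in $\widehat L^M g_n = L^0 g_n + \kappa g_n - (\kappa-M)_+ g_n$, concluding $g \in \Dom(\widehat L^M)$ and identifying $\widehat L^M g$. Your route instead exploits the explicit domain descriptions \eqref{eq:dom_Lkappa} for both $L^\kappa$ and $\widehat L^M$ directly: once $\kappa g \in \H$, the weak-derivative expression $L^0 g$ can be isolated as an element of $\H$ by subtraction, and the rest is pointwise algebra via $\kappa_M = \kappa - (\kappa-M)_+$. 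Your approach is shorter and entirely avoids the approximation machinery; the paper's mollifier argument would be the natural choice if the self-adjoint realizations were specified only abstractly (e.g.\ as Friedrichs extensions or form closures) rather than through the concrete domain formula \eqref{eq:dom_Lkappa}, but given that the paper asserts those explicit characterizations, your elementary argument suffices.
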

\begin{proof}
    We take similar steps to the proof of \cite[Lemma 15]{Wang2020+}. For a sequence of mollifiers $(f_n)_{n\in\mathbb{N}}$ as in the proof, similarly consider $g_n := f_n g$. Since $g \in \Dom(L^\kappa)$, it is twice weakly differentiable. Then each $g_n$ is a compactly supported, twice weakly differentiable function. Furthermore, $g_n \to g$ in the Sobolev space $W^{2,1}(\Rd)$, and we have that $|g_n| \le C|g|$ for some constant $C$, pointwise, and uniformly over $n$.
    
    Hence, using the fact that $L^\kappa$ is closed and $\kappa g \in \mathcal H$, and the dominated convergence theorem, we have the following convergence in $\mathcal H$,
    \begin{equation*}
        \widehat{L}^M g_n = \widehat{L}^0 g_n + \kappa g_n - (\kappa-M)_+ g_n  \longrightarrow L^\kappa g - (\kappa-M)_+ g \in \mathcal H.
    \end{equation*}
\end{proof}
From now on, we impose the following standard assumption.
\begin{assumption}
    The eigenfunction $\varphi_0$ of $L^\kappa$ with eigenvalue $\lambda_0^\kappa$ satisfies $\varphi_0 \in \Dom(\kappa)$, that is, $\kappa \varphi_0 \in \H$.
    \label{assm:varphi_kappa_integrable}
\end{assumption}
\begin{remark}
    In the QSMC context with Brownian motion, i.e., $A\equiv 0$, Assumption~\ref{assm:varphi_kappa_integrable} corresponds to requiring that $\Delta \pi$ is square-integrable, which is certainly true for densities of the form $\exp(-\vert x\vert^\alpha)$ with $\alpha \ge 2$.
\end{remark}
Together Lemma~\ref{lemma:dom_LM} and Assumption~\ref{assm:varphi_kappa_integrable} immediately imply the following.
\begin{lemma}
    Under Assumption~\ref{assm:varphi_kappa_integrable} we have $\varphi_0\in \Dom(\widehat{L}^M)$.
    \label{lemma:varphi0_domLM}
\end{lemma}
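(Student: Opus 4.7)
The plan is to show this as a direct consequence of the two results we have immediately preceding the statement. The eigenfunction $\varphi_0$ is by construction in $\Dom(L^\kappa)$, since $L^\kappa \varphi_0 = \lambda_0^\kappa \varphi_0 \in \H$. By Assumption~\ref{assm:varphi_kappa_integrable}, we also have $\varphi_0 \in \Dom(\kappa)$. Thus $\varphi_0$ lies in the intersection $\Dom(L^\kappa) \cap \Dom(\kappa)$, and Lemma~\ref{lemma:dom_LM} applies with $g = \varphi_0$.

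The first step is therefore simply to invoke Lemma~\ref{lemma:dom_LM}, which delivers $\varphi_0 \in \Dom(\widehat{L}^M)$ together with the identity $\widehat{L}^M \varphi_0 = L^\kappa \varphi_0 + H_M \varphi_0 = \lambda_0^\kappa \varphi_0 - (\kappa - M)_+ \varphi_0$. The second (purely bookkeeping) step would be to verify that the right-hand side is indeed in $\H$, which is immediate since $\lambda_0^\kappa \varphi_0 \in \H$ and $H_M \varphi_0 \in \H$ by the pointwise bound $|(\kappa-M)_+\varphi_0| \le |\kappa \varphi_0|$ combined with Assumption~\ref{assm:varphi_kappa_integrable}.

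There is no real obstacle here: the whole point of introducing $\Dom(\kappa)$ as the common domain of the multiplication operators $H_M$ and (morally) $\kappa$, and of proving Lemma~\ref{lemma:dom_LM}, was to reduce exactly this kind of membership claim to a one-line check. The statement is included because it is needed downstream (so that $\widehat{L}^M \varphi_0$ is defined and the perturbation $\widehat{L}^M - L^\kappa$ can be evaluated at $\varphi_0$), but its proof is essentially a tautology given the preceding lemma.
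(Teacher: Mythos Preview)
Your proposal is correct and matches the paper's approach exactly: the paper simply states that Lemma~\ref{lemma:dom_LM} together with Assumption~\ref{assm:varphi_kappa_integrable} immediately imply the result, and you have spelled out precisely that reasoning. The additional bookkeeping verification you include is harmless but not needed, since Lemma~\ref{lemma:dom_LM} already delivers $\varphi_0 \in \Dom(\widehat{L}^M)$ directly.
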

We now turn to the question of existence of a perturbed quasi-stationary distribution, corresponding to a minimal eigenfunction of $\widehat{L}^M$. Clearly, if we set $M=0$, there is no killing, and so there is no quasi-stationary distribution to speak of. Thus it is necessary to enforce a minimum truncation level.

\begin{assumption}
    The truncation level satisfies $M \ge  \lkap$.
    \label{assm:M>lkap}
\end{assumption}
\begin{theorem}
    Under Assumptions~\ref{assm:unbdd_kappa}, \ref{assm:varphi_kappa_integrable} and \ref{assm:M>lkap}, the 
    %the truncated system
    diffusion $X$ killed at rate $\kappa_M$ 
    with generator $\widehat{L}^M$ possesses a unique quasi-stationary distribution, and there is a corresponding strictly positive eigenfunction $\widehat \varphi_0 \in \H$ of $\widehat{L}^M$ with (minimal) eigenvalue $\widehat{\lambda}_0^M := \inf \sigma(\widehat{L}^M)$.
    \label{thm:tildvarphi0}
\end{theorem}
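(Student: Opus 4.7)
The plan is to show that the ground-state eigenvalue $\widehat{\lambda}_0^M$ of $\widehat{L}^M$ sits strictly below the essential spectrum, from which the Perron--Frobenius and Simon/Kolb machinery already invoked in Proposition~\ref{prop:logistic_BM} will yield a one-dimensional eigenspace spanned by a strictly positive $\widehat{\varphi}_0$, together with the associated unique quasi-stationary distribution.

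The first step is to bound $\widehat{\lambda}_0^M$ from above using the unperturbed ground state as a test function. By Lemma~\ref{lemma:varphi0_domLM} we have $\varphi_0 \in \Dom(\widehat{L}^M) \cap \mathsf{S}(\H)$, and Lemma~\ref{lemma:dom_LM} gives
\[
    \widehat{L}^M \varphi_0 \;=\; L^\kappa \varphi_0 + H_M \varphi_0 \;=\; \lkap \varphi_0 - (\kappa - M)_+ \varphi_0.
\]
Taking the inner product with $\varphi_0$ and applying the variational characterisation of $\widehat{\lambda}_0^M$ (analogous to \eqref{eq:def_hatlambda}) yields
\[
    \widehat{\lambda}_0^M \;\le\; \langle \varphi_0, \widehat{L}^M \varphi_0\rangle \;=\; \lkap - \int_{\Rd} (\kappa - M)_+\, \varphi_0^{\,2}\, \dif \Gamma.
\]
By Assumption~\ref{assm:unbdd_kappa}, the set $\{\kappa > M\}$ has positive Lebesgue measure, and $\varphi_0$ is strictly positive, so the integral is strictly positive. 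Combined with Assumption~\ref{assm:M>lkap}, this delivers the crucial strict inequality $\widehat{\lambda}_0^M < \lkap \le M$.

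The second step is to bound the bottom of the essential spectrum of $\widehat{L}^M$ from below by $M$. By Assumption~\ref{assm:unbdd_kappa} there exists a ball $B_R \subset \Rd$ such that $\kappa > M$ on $\Rd \setminus B_R$, hence $\kappa_M \equiv M$ outside $B_R$, and the difference $\widehat{L}^M - (L^0 + M) = \kappa_M - M$ is multiplication by a bounded, compactly supported function. By standard local elliptic regularity together with Rellich compactness on $B_R$, this multiplier is relatively $L^0$-compact, so Weyl's theorem on invariance of essential spectrum yields
\[
    \sigma_{\mathrm{ess}}(\widehat{L}^M) \;=\; \sigma_{\mathrm{ess}}(L^0 + M) \;=\; M + \sigma_{\mathrm{ess}}(L^0) \;\subseteq\; [M, \infty),
\]
where the last inclusion uses that $L^0$ is self-adjoint and positive semi-definite (via integration by parts on $\mathcal{L}^2(\Rd,\Gamma)$ against $\dif \Gamma = e^{2A} \dif x$).

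Combining the two steps gives $\widehat{\lambda}_0^M < M \le \inf \sigma_{\mathrm{ess}}(\widehat{L}^M)$, so $\widehat{\lambda}_0^M$ is an isolated eigenvalue of finite multiplicity. I would then invoke \cite[Theorem XIII.44]{Reed1978}, exactly as in the proof of Proposition~\ref{prop:logistic_BM}, to conclude that the eigenspace is one-dimensional and generated by a strictly positive $\widehat{\varphi}_0 \in \mathsf{S}(\H)$, and finally apply the argument of \cite{Simon1993} (cf.\ \cite[Section~4.2]{Kolb2012}) to identify the corresponding density with the unique quasi-stationary distribution of $X$ killed at rate $\kappa_M$. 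The step I expect to require the most care is the relative $L^0$-compactness of the compactly supported multiplier $\kappa_M - M$ in the weighted space $\mathcal{L}^2(\Rd,\Gamma)$, which hinges on the local regularity of the drift $\nabla A$ already in force (H\"older continuity), so that resolvents of $L^0$ map bounded sets into sets that are locally compact in $\mathcal{L}^2(B_R,\Gamma)$.
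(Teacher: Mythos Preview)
Your first step---using $\varphi_0$ as a trial function via Lemmas~\ref{lemma:varphi0_domLM} and~\ref{lemma:dom_LM} to obtain $\widehat{\lambda}_0^M < \lkap \le M$---is exactly what the paper does. The difference lies in how you conclude. The paper stops at the inequality $\widehat{\lambda}_0^M < M$, observes that $M$ is the limiting value of $\kappa_M$ at infinity, and invokes \cite[Theorem~4.5]{Champagnat2017} (with a cross-reference to \cite[Theorem~4.3]{Kolb2012}) to obtain existence and uniqueness of the quasi-stationary distribution and the associated eigenfunction in one stroke. You instead establish $\inf\sigma_{\mathrm{ess}}(\widehat{L}^M)\ge M$ explicitly via relative $L^0$-compactness of the compactly supported multiplier $\kappa_M-M$ and Weyl's theorem, then apply Perron--Frobenius \cite[Theorem~XIII.44]{Reed1978} and the argument of \cite{Simon1993} separately. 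Your route is correct and more self-contained spectrally; indeed the paper records your essential-spectrum bound only in a remark \emph{after} the proof (citing \cite[Lemma~3.3(v)]{Kolb2012}) rather than using it inside the argument. The trade-off is that you take on the burden of verifying relative compactness in the weighted space $\mathcal{L}^2(\Rd,\Gamma)$, which---as you rightly flag---needs the local regularity of $\nabla A$, whereas the paper's single citation to Champagnat--Villemonais absorbs all of this.
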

\begin{proof}
    Since $\widehat{L}^M$ is a self-adjoint semibounded operator, we have the variational representation \cite[Theorem XIII.1]{Reed1978},
    \begin{equation*}
        \widehat{\lambda}_0^M = \inf_{f \in \Dom(\widehat{L}^M)\cap \mathsf S(\H)} \langle f, \widehat{L}^M f\rangle.
    \end{equation*}
    From Lemma~\ref{lemma:varphi0_domLM}, we have $\varphi_0 \in \Dom(\widehat{L}^M)\cap \mathsf S(\H)$. Hence, making use of Lemma~\ref{lemma:dom_LM}, we have
    \begin{equation*}
        \begin{split}
            \widehat{\lambda}_0^M &\le \langle \varphi_0, \widehat{L}^M \varphi_0\rangle \\
            &= \langle \varphi_0, \Lk \varphi_0\rangle+\langle \varphi_0, H_M \varphi_0\rangle\\
            &= \lkap -\int_{\Rd} (\kappa-M)_+ \varphi_0^2\dif \Gamma < M,
        \end{split}
    \end{equation*}
    since we are assuming that $\lkap \le M$, and the final term is strictly negative, since $\varphi_0$ is strictly positive.
    
    Thus the bottom of the spectrum $\widehat{\lambda}_0^M$ of $\widehat{L}^M$ is strictly smaller than $M$, which is the limit of the killing at infinity, hence by \cite[Theorem 4.5]{Champagnat2017} (and \textit{c.f.} \cite[Theorem 4.3]{Kolb2012}), there is a unique quasi-stationary distribution, and there is a corresponding eigenfunction $\widehat \varphi_0$ with eigenvalue $\widehat{\lambda}_0^M$.
\end{proof}
\begin{remark}
    The constant $\lkap$ corresponds to the constant $-\Phi$ in Condition~(1) of \cite{Pollock2020}. It can also be expressed as the expected value of $\kappa$ w.r.t. distribution $\pi$.
\end{remark}
\begin{remark}
    It follows, by \cite[Lemma~14.4.1]{Davies2007} that for the bottom of the essential spectrum, we have $\inf \sigma_\mathrm{ess}(\widehat{L}^M)\ge M$, and hence $\widehat{L}^M$ also possesses a spectral gap. This can also be read off from the proof of Theorem~\ref{thm:tildvarphi0}. The calculation there shows that
    \begin{equation*}
        \widehat{\lambda}_0^M < \lambda_0^\kappa.
    \end{equation*}
\end{remark}

    Now we define for arbitrary $M\geq 0$,
\begin{equation*}
    \widehat\lambda_1^M := \sup_{\mathbb V \in \mathcal V_1} \inf_{f\in \mathbb V^\perp \cap \mathsf S(\H)\cap \Dom(\widehat L^M)} \langle f, \widehat  L^M f\rangle.
\end{equation*}
Then by \cite[Theorem XIII.1]{Reed1978}, $\widehat  \lambda_1^M$ is either an eigenvalue of $\widehat  L^M$ or coincides with the bottom of the essential spectrum $\sigma_\mathrm{ess}(\widehat  L^M)$.

\begin{proposition}
    Fix $M_2>M_1>0$. Then $\lambda_1^\kappa\ge\widehat \lambda_1^{M_2} \ge \widehat \lambda_1^{M_1}$.
    \label{prop:monoton_eval}
\end{proposition}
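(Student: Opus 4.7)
The plan is to exploit the pointwise ordering $\kappa_{M_1}\le \kappa_{M_2}\le \kappa$ and translate it, via the variational characterisations of $\widehat\lambda_1^M$ and $\lambda_1^\kappa$, into the claimed ordering of these quantities. The strategy splits naturally into two inequalities, one straightforward and one requiring care about domains.

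For the easier inequality $\widehat\lambda_1^{M_1}\le \widehat\lambda_1^{M_2}$, the first observation I would record is that since $\kappa_{M_1}$ and $\kappa_{M_2}$ are both bounded, the operators $\widehat L^{M_1}$ and $\widehat L^{M_2}$ share the same domain, namely that of the unkilled differential operator $-\frac12\Delta-\nabla A\cdot\nabla$ with weak derivatives. For any $f$ in this common domain,
\[
    \langle f,\widehat L^{M_2}f\rangle - \langle f,\widehat L^{M_1}f\rangle = \int_{\Rd}(\kappa_{M_2}-\kappa_{M_1})\,f^2\, \dif\Gamma \ge 0.
\]
Fixing any $\mathbb V\in \mathcal V_1$, the infimum in the variational formula for $\widehat\lambda_1^{M_2}$ is thus bounded below by the corresponding infimum for $\widehat\lambda_1^{M_1}$; taking the supremum over $\mathbb V$ closes this step.

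The inequality $\widehat\lambda_1^{M_2}\le \lambda_1^\kappa$ is more delicate, because $\Dom(\Lk)$ and $\Dom(\widehat L^{M_2})$ differ in general: functions in $\Dom(\Lk)$ need not satisfy $\kappa f \in \H$ individually, only after subtraction of the differential part. The approach I would take is to restrict the variational infima to a common form core, for example $C_c^\infty(\Rd)$, which is contained in $\Dom(\Lk)\cap \Dom(\kappa)\cap \Dom(\widehat L^{M_2})$ since all the relevant derivatives and multiplications act boundedly on smooth compactly supported functions. Because $\Lk$ and $\widehat L^{M_2}$ are semibounded self-adjoint operators for which $C_c^\infty(\Rd)$ is a form core, the min-max principle (\cite[Theorem XIII.1]{Reed1978}) lets one replace $\Dom$ by $C_c^\infty(\Rd)$ in the variational expressions for $\lambda_1^\kappa$ and $\widehat\lambda_1^{M_2}$ without altering their values. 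On this common restricted set the pointwise bound $\kappa\ge \kappa_{M_2}$ yields $\langle f,\Lk f\rangle\ge \langle f,\widehat L^{M_2} f\rangle$, and the sup-inf comparison then delivers the claim.

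The hard part is precisely this domain-mismatch in the second step. An equivalent route that bypasses the core argument would be to work directly with quadratic forms: one has $Q(\Lk)\subseteq Q(\widehat L^{M_2})$, because $\int \kappa f^2\dif\Gamma<\infty$ trivially implies $\int \kappa_{M_2}f^2\dif\Gamma<\infty$, and
\[
q^\kappa(f,f) = q^{\kappa_{M_2}}(f,f) + \int_{\Rd}(\kappa-\kappa_{M_2})f^2\,\dif\Gamma\ge q^{\kappa_{M_2}}(f,f)
\]
on $Q(\Lk)$, after which the form version of the min-max principle delivers the inequality directly.
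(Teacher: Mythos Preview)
Your proposal is correct and follows essentially the same route as the paper: restrict the variational formulae to $C_c^\infty(\Rd)$ as a common core and then use the pointwise ordering $\kappa_{M_1}\le\kappa_{M_2}\le\kappa$ to compare the quadratic forms term by term. The paper applies the core restriction uniformly to both inequalities rather than handling the first via the shared domain of the bounded-killing operators, and it does not mention the quadratic-form alternative you sketch, but the substance is identical.
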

\begin{proof}
    This follows from a routine calculation, which we include for completeness. Since the set of smooth, compactly supported functions $C_\mathrm c^\infty(\Rd)\subset \H$ is a core for $\Lk$ and for $\widehat L^M$ for any $M>0$, in the variational representation of eigenvalues we can restrict to functions $f\in C_\mathrm c^\infty(\Rd)$.
    Then,
    \begin{equation*}
        \begin{split}
            \widehat \lambda_1^{M_2} &= \sup_{\mathbb V \in \mathcal V_1} \inf_{f\in \mathbb V^\perp \cap \mathsf S(\H)\cap C_\mathrm c^\infty}
            \left (\langle f,\widehat L^0 f\rangle + \langle f,(\kappa \wedge M_2)f\rangle \right )\\
            &= \sup_{\mathbb V \in \mathcal V_1} \inf_{f\in \mathbb V^\perp \cap \mathsf S(\H)\cap C_\mathrm c^\infty}
            \left ( \langle f,\widehat L^0 f\rangle + \langle f,(\kappa \wedge M_1)f\rangle + \langle f, (\kappa\wedge M_2 - \kappa \wedge M_1)f\rangle \right)\\
            &\ge \sup_{\mathbb V \in \mathcal V_1} \inf_{f\in \mathbb V^\perp \cap \mathsf S(\H)\cap C_\mathrm c^\infty}
            \left ( \langle f,\widehat L^0 f\rangle + \langle f,(\kappa \wedge M_1)f\rangle  \right)\\
            &= \widehat \lambda_1^{M_1}.
        \end{split}
    \end{equation*}
    An analogous argument also 
    % establishes 
    gives
    $\lambda_1^\kappa\ge\widehat \lambda_1^{M_2}$.
\end{proof}

We have established in Theorem~\ref{thm:tildvarphi0} the existence of a minimal eigenfunction $\widehat \varphi_0$ of $\widehat L^M$, under Assumptions~\ref{assm:unbdd_kappa}, \ref{assm:varphi_kappa_integrable} and \ref{assm:M>lkap} which we assume to hold in the following. We can thus define one-dimensional invariant subspaces of $\Lk$ and $\widehat L^M$ respectively, $\lspan\{\varphi_0\}$ and $\lspan\{\widehat  \varphi_0\}$, and make use of the projection operators 
as previously defined in Section~\ref{subsec:projection_notation}. 
Thus, with $E_0: \R \to \H$ and $E_1: \mathcal K(E_1)\to \H$ we have for $f\in \Dom(\Lk)$ that
\begin{equation*}
    \Lk f=    
    \begin{pmatrix}
        E_0 & E_1
    \end{pmatrix}
    \begin{pmatrix}
        \Lambda_0 & 0\\
        0 & \Lambda_1
    \end{pmatrix}
    \begin{pmatrix}
        E_0^*f\\
        E_1^*f
    \end{pmatrix}.
\end{equation*}
In particular, $\Lambda_0:\R\to\R$ is the bounded linear operator given by
\begin{equation*}
    \Lambda_0: a \mapsto \lkap a, \quad a\in\R,
\end{equation*}
and $\Lambda_1$ is the self-adjoint operator given by $\Lambda_1=E_1^* \Lk E_1$ with domain $\Dom(\Lambda_1) = E_1^* \Dom(\Lk)$.
For our perturbed operator $\widehat L^M$, we have with $F_0: \R\to \H$ and $F_1: \mathcal K(F_1)\to \H$ the decomposition \begin{equation*}
    \widehat L^M f=
    \begin{pmatrix}
        F_0 & F_1
    \end{pmatrix}
    \begin{pmatrix}
        \widehat \Lambda_0 & 0 \\
        0 & \widehat \Lambda_1 
    \end{pmatrix}
    \begin{pmatrix}
        F_0^*f \\
        F_1^*f
    \end{pmatrix},
\end{equation*}
for $f \in \Dom(\widehat L^M)$, where $\widehat{\Lambda}_0 \colon \R \to 
\R$ with $\widehat{\Lambda}_0 a = \widehat{\lambda}_0^M a$ and
$\widehat \Lambda_1=F_1^* \widehat L^M F_1$ are self-adjoint operators. Note that throughout, the norm of an operator always refers to the corresponding operator norm.

Our goal is to bound $\|\varphi_0-\widehat\varphi_0\|_2$.
For this we aim to apply the results of \cite{DavisKahan1970}.
However, we cannot immediately use the sin~$\theta$ theorem as described in \cite{DavisKahan1970}, because of the unbounded nature of our operators.
In order to overcome this issue, we begin by defining an operator $R: \R \to \H$, given by 
\begin{equation}
\label{eq: R}
    R = \widehat L^M E_0 - E_0 \Lambda_0.
\end{equation}
\begin{lemma}
The operator $R \colon \R\to\H$ as defined in \eqref{eq: R} is a bounded linear operator, which can be represented as
    \begin{equation*}
        Ra = H_M E_0 a = a (H_M\varphi_0), \quad a\in\R.
    \end{equation*}
    Moreover, the adjoint operator $R^*: \H \to \R$ is given by
    \begin{equation}
        R^* f = \langle f, H_M\varphi_0\rangle, \quad f\in \H.
        \label{eq:rstar}
    \end{equation}
\end{lemma}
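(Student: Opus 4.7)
My plan is to compute $R$ directly from the definitions, using the results that have already been established.

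First I would unpack the definition. Since $E_0 \colon \R \to \H$ is given by $E_0 a = a\varphi_0$ and $\Lambda_0 a = \lambda_0^\kappa a$, we have for any $a\in\R$
\begin{equation*}
    Ra = \widehat L^M (a\varphi_0) - E_0(\lambda_0^\kappa a) = a\,\widehat L^M\varphi_0 - a\lambda_0^\kappa \varphi_0.
\end{equation*}
Here the first term is well-defined because Lemma~\ref{lemma:varphi0_domLM} ensures $\varphi_0 \in \Dom(\widehat L^M)$. Next I would invoke Lemma~\ref{lemma:dom_LM} together with $L^\kappa \varphi_0 = \lambda_0^\kappa\varphi_0$, yielding
\begin{equation*}
    \widehat L^M \varphi_0 = L^\kappa \varphi_0 + H_M\varphi_0 = \lambda_0^\kappa \varphi_0 + H_M\varphi_0.
\end{equation*}
Substituting back, the $\lambda_0^\kappa \varphi_0$ contribution cancels and we obtain $Ra = a\,H_M\varphi_0 = H_M E_0 a$, as claimed.

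Next I would verify boundedness. It suffices to check that $H_M\varphi_0 \in \H$. By Assumption~\ref{assm:varphi_kappa_integrable} we have $\kappa\varphi_0 \in \H$, and since $|(\kappa-M)_+\varphi_0| \le |\kappa\varphi_0|$ pointwise, $H_M\varphi_0 = -(\kappa-M)_+\varphi_0$ lies in $\H$. Consequently $\|Ra\|_2 = |a|\,\|H_M\varphi_0\|_2$ for every $a\in\R$, which shows that $R$ is a bounded linear operator of norm $\|H_M\varphi_0\|_2$.

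Finally, for the adjoint I would use the defining relation of $R^*$: for every $a\in\R$ and $f\in\H$,
\begin{equation*}
    a\,(R^*f) = \langle a, R^*f\rangle_\R = \langle Ra, f\rangle = \langle a H_M\varphi_0, f\rangle = a\,\langle f, H_M\varphi_0\rangle,
\end{equation*}
by symmetry of the real inner product. Taking $a=1$ yields \eqref{eq:rstar}. I do not anticipate a serious obstacle here: all the analytical work is already absorbed into Lemmas~\ref{lemma:dom_LM} and~\ref{lemma:varphi0_domLM} and Assumption~\ref{assm:varphi_kappa_integrable}; the only mild care needed is verifying that the operator identity holds on all of $\R$ (not merely on some dense subset), which is automatic because $\varphi_0$ lies in the domain of both $L^\kappa$ and $\widehat L^M$.
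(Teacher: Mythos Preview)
Your proposal is correct and follows essentially the same approach as the paper: both compute $Ra$ directly by applying Lemma~\ref{lemma:varphi0_domLM} to ensure $\varphi_0\in\Dom(\widehat L^M)$, then Lemma~\ref{lemma:dom_LM} to expand $\widehat L^M\varphi_0$ and cancel the $\lambda_0^\kappa\varphi_0$ term, and finally identify the adjoint via the defining relation. Your explicit verification that $H_M\varphi_0\in\H$ from Assumption~\ref{assm:varphi_kappa_integrable} is a slight elaboration of what the paper states more tersely, but the argument is the same.
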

\begin{proof}
    Since $\mathrm{Ran}(E_0):= \{E_0 a:a\in \R\} =\lspan\{\varphi_0\}$, by using Lemma~\ref{lemma:varphi0_domLM} we obtain that $R$ is well-defined. Thus $\widehat L^M E_0$ is a bounded operator, and since $E_0 \Lambda_0$ is also a bounded operator, $R$ is a bounded operator.
    
    Furthermore, we can directly compute that
    \begin{equation*}
        \begin{split}
            Ra &= \widehat L^M a \varphi_0 - \lambda_0^\kappa a \varphi_0\\
            &= a [L^\kappa \varphi_0 + H_M\varphi_0- \lambda_0^\kappa  \varphi_0]\\
            &= a[ \lambda_0^\kappa  \varphi_0+H_M\varphi_0- \lambda_0^\kappa \varphi_0]
            = a H_M \varphi_0,
        \end{split} 
    \end{equation*}
    where we have made use of Lemma~\ref{lemma:dom_LM}.
    
    For the adjoint, recall that $R^*: \H \to \R$ is the unique operator with the property that for arbitrary $f\in\mathcal H$ and $a \in \R$, we have that
    \begin{equation*}
        \langle f, R a \rangle =  a (R^* f) .
    \end{equation*}
    A simple calculation then gives \eqref{eq:rstar}.
\end{proof}

Consider now the bounded operator $R^* F_1: \mathcal K(F_1) \to \R$. By \eqref{eq:rstar}, we have for $h \in \mathcal K(F_1)$,
\begin{equation*}
    R^* F_1 h = \langle F_1 h, H_M \varphi_0 \rangle.
\end{equation*}

\begin{lemma}
 The bounded operator $R^* F_1$ is the closure of the densely-defined operator 
    \begin{equation*}
        E_0^* F_1 \widehat\Lambda_1 - \Lambda_0 E_0^* F_1,
    \end{equation*}
    on smooth compactly-supported functions.
    \label{lemma:R=E0F}
\end{lemma}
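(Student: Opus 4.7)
The plan is to verify the pointwise identity $R^* F_1 h = E_0^* F_1 \widehat\Lambda_1 h - \Lambda_0 E_0^* F_1 h$ on a dense subspace and then invoke boundedness of $R^* F_1$ to promote this to the closure statement. The engine is the self-adjointness of $\widehat L^M$ together with the eigenequation $L^\kappa \varphi_0 = \lambda_0^\kappa \varphi_0$ and Assumption~\ref{assm:varphi_kappa_integrable}, which by Lemma~\ref{lemma:varphi0_domLM} places $\varphi_0$ in $\Dom(\widehat L^M)$.

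Concretely, I would first fix $h\in\Dom(\widehat\Lambda_1)$, which by construction means $F_1 h \in F_1F_1^*\Dom(\widehat L^M) = Q^\perp \Dom(\widehat L^M)\subset \Dom(\widehat L^M)$, with $\widehat L^M F_1 h = F_1 \widehat\Lambda_1 h$ (using $F_0^* F_1=0$ in the block decomposition \eqref{eq:decom_hatA}). Then, using that $\varphi_0\in\Dom(\widehat L^M)$, self-adjointness of $\widehat L^M$ yields
\begin{equation*}
\langle F_1 \widehat\Lambda_1 h,\varphi_0\rangle - \lambda_0^\kappa\langle F_1 h,\varphi_0\rangle
= \langle F_1 h,\widehat L^M\varphi_0 - \lambda_0^\kappa \varphi_0\rangle
= \langle F_1 h, H_M \varphi_0\rangle,
\end{equation*}
where the last step uses Lemma~\ref{lemma:dom_LM} and the eigenequation for $\varphi_0$. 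The left-hand side is exactly $E_0^* F_1 \widehat\Lambda_1 h - \Lambda_0 E_0^* F_1 h$ (recalling $E_0^* v = \langle v,\varphi_0\rangle$ and $\Lambda_0 a = \lambda_0^\kappa a$), while the right-hand side is $R^* F_1 h$ by \eqref{eq:rstar}. Hence $R^* F_1$ extends $E_0^*F_1\widehat\Lambda_1-\Lambda_0 E_0^*F_1$ on all of $\Dom(\widehat\Lambda_1)$, and in particular on the subspace of $h\in\mathcal K(F_1)$ with $F_1 h\in C_c^\infty(\mathbb R^d)$.

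To upgrade ``extension'' to ``closure'', I would use that $C_c^\infty(\mathbb R^d)$ is a core for $\widehat L^M$ (as noted in the proof of Proposition~\ref{prop:monoton_eval}), so the corresponding $h$'s form a core for $\widehat\Lambda_1$ through the isometry $F_1$, hence are dense in $\mathcal K(F_1)$. Since $R^*F_1$ is a bounded operator on $\mathcal K(F_1)$ (its graph is closed), and the restriction of $E_0^*F_1\widehat\Lambda_1-\Lambda_0 E_0^*F_1$ to this core agrees with $R^*F_1$ on a dense subspace, the closure of this restriction is precisely $R^* F_1$.

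The main technical obstacle is the unbounded character of $\widehat L^M$: one must be careful that the formal adjoint manipulation $R^*=E_0^*\widehat L^M-\Lambda_0 E_0^*$ is legitimate, which is why I would bypass a direct adjoint argument and instead verify the identity by pairing against $\varphi_0$ in the Hilbert space using self-adjointness, so that all the unbounded operators only act on elements already known to lie in their domains (namely $\varphi_0$ and $F_1 h$). The density of $C_c^\infty(\mathbb R^d)$ as a core then closes the argument without any further unbounded-operator subtleties.
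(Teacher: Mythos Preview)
Your argument is correct and lands on the same identity as the paper, but the route is genuinely a bit different. The paper applies the decomposition $\widehat L^M = L^\kappa + H_M$ directly to $F_1 h$ and then pairs with $\varphi_0$, using the $L^\kappa$-invariance of $\lspan\{\varphi_0\}$ and its complement to kill the unwanted terms. You instead use self-adjointness of $\widehat L^M$ to shift the operator onto $\varphi_0$ and then invoke Lemma~\ref{lemma:dom_LM} only for $\varphi_0$. Your variant is arguably cleaner: it needs just $\varphi_0\in\Dom(\widehat L^M)$ (Lemma~\ref{lemma:varphi0_domLM}), whereas the paper's line $\widehat L^M F_1 h = L^\kappa F_1 h + H_M F_1 h$ tacitly asks for $F_1 h = Q^\perp f\in\Dom(L^\kappa)\cap\Dom(\kappa)$, which in turn would require $\widehat\varphi_0\in\Dom(\kappa)$ --- something not explicitly recorded. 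As a bonus, your computation works on all of $\Dom(\widehat\Lambda_1)$, not just the $C_c^\infty$-induced part.

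One small wording fix: the phrase ``the subspace of $h\in\mathcal K(F_1)$ with $F_1 h\in C_c^\infty(\mathbb R^d)$'' is not quite what you want. Since $F_1 h = Q^\perp f = f-\langle f,\widehat\varphi_0\rangle\widehat\varphi_0$, this is essentially never compactly supported. The correct dense set (and the one the paper uses) is $\{F_1^* f: f\in C_c^\infty(\mathbb R^d)\}\subset\mathcal K(F_1)$; density in $\mathcal K(F_1)$ follows from density of $C_c^\infty$ in $\H$ and surjectivity of $F_1^*$. With that correction your closure argument goes through exactly as written.
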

\begin{proof}
    Let $f\in \H$ be a smooth compactly-supported function and consider $h := F_1^* f\in \mathcal K(F_1)$. Let $Q^\perp=F_1^*F_1$ be the projection on $\lspan\{\widehat\varphi_0\}^\perp$ as well as $P$ be the projection onto $\lspan\{\varphi_0\}$ and $P^{\perp}$ the projection onto $\lspan\{\varphi_0\}^\perp$. We can straightforwardly compute, using the fact that $\widehat L^M$ is invariant on $\mathrm{Ran}(F_1)$, and the fact that $\Lk$ is invariant on $\lspan\{\varphi_0\}$ and its orthogonal complement:
    \begin{equation*}
        \begin{split}
            E^*_0 F_1 \widehat\Lambda_1 h - \Lambda_0 E_0^* F_1 h &= E_0^* (Q^\perp\widehat L^M F_1 h) - \Lambda_0 E_0^* F_1 h\\
            &= E_0^* (\widehat L^M F_1 h) - \Lambda_0 E_0^* F_1 h\\
            &= \langle \widehat L^M F_1 h - \lambda_0^\kappa F_1 h , \varphi_0\rangle \\
            &= \langle L^\kappa F_1 h + H_M F_1 h -  \lambda_0^\kappa F_1 h , \varphi_0\rangle\\
            &= \langle L^\kappa (P F_1 h +  P^\perp F_1 h) + H_M F_1 h-  \lambda_0^\kappa F_1 h , \varphi_0\rangle\\
            &= \langle \lambda_0^\kappa PF_1 h +0 + H_M F_1 h -  \lambda_0^\kappa F_1 h , \varphi_0\rangle\\
            &= \lambda_0^\kappa\langle F_1 h,\varphi_0\rangle \langle \varphi_0,\varphi_0\rangle  +\langle H_M F_1 h, \varphi_0\rangle - \langle\lambda_0^\kappa F_1 h , \varphi_0\rangle\\
            &= \langle F_1 h, H_M\varphi_0\rangle
            = R^* F_1 h.
        \end{split}
    \end{equation*}
    At the end we also used self-adjointness of $H_M$ and \eqref{eq:rstar}.
\end{proof}
We require an additional technical condition.
\begin{assumption}
    There exists some $\delta >0$ such that $\widehat \lambda_1^M >\lkap+\delta$.
    \label{assm:spec_sec_Lambda1}
\end{assumption}
The following auxiliary estimate is a consequence of the latter assumption, Lemma~\ref{lemma:R=E0F}, i.e., $R^* F_1 = E_0^* F_1 \widehat \Lambda_1 - \Lambda_0 E_0^* F_1$ on a dense subset of $\H$, and \cite[Theorem 5.1]{DavisKahan1970}; see their comments preceding \cite[Theorem 5.2]{DavisKahan1970} concerning the unbounded case.
\begin{lemma}
    Suppose that Assumption~\ref{assm:spec_sec_Lambda1} holds. Then 
    \begin{equation}
        \delta \|E_0^* F_1\|\le \|R^* F_1\|.
        \label{eq:bdd_Rdelta}
    \end{equation}
\end{lemma}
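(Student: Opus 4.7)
The plan is to recognise the bound as a Sylvester-equation-type estimate, in the spirit of \cite[Theorem 5.1]{DavisKahan1970}. Set $X := E_0^* F_1 \colon \mathcal{K}(F_1) \to \R$. By Lemma~\ref{lemma:R=E0F}, on a dense subspace $\mathcal{C} \subseteq \mathcal{K}(F_1)$ (the image under $F_1^*$ of the smooth compactly supported functions) we have the identity
\begin{equation*}
R^* F_1 = X \widehat{\Lambda}_1 - \Lambda_0 X = X(\widehat{\Lambda}_1 - \lambda_0^\kappa).
\end{equation*}
The task is to turn this ``Sylvester'' equation into the desired operator norm bound $\delta\|X\| \le \|R^* F_1\|$.

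First I would exploit Assumption~\ref{assm:spec_sec_Lambda1} together with the identification $\inf\sigma(\widehat{\Lambda}_1) = \widehat{\lambda}_1^M$, which holds by the spectral decomposition of the self-adjoint operator $\widehat{L}^M$ relative to the invariant decomposition $\lspan\{\widehat{\varphi}_0\} \oplus \lspan\{\widehat{\varphi}_0\}^\perp$. This yields that $\widehat{\Lambda}_1 - \lambda_0^\kappa$ is self-adjoint with spectrum contained in $[\delta,\infty)$, hence boundedly invertible with $T := (\widehat{\Lambda}_1 - \lambda_0^\kappa)^{-1}$ satisfying $\|T\| \le 1/\delta$. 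Given this, the argument would proceed by showing $X = R^* F_1 T$ as bounded operators on $\mathcal{K}(F_1)$. Fixing an arbitrary $g \in \mathcal{K}(F_1)$, set $h := Tg \in \Dom(\widehat{\Lambda}_1)$ so that $(\widehat{\Lambda}_1 - \lambda_0^\kappa)h = g$, and approximate $h$ by a sequence $(h_n) \subseteq \mathcal{C}$ converging to $h$ in the graph norm of $\widehat{\Lambda}_1$. Applying the Sylvester identity to each $h_n$ and passing to the limit using continuity of both $X$ and $R^* F_1$ then gives
\begin{equation*}
Xg = X(\widehat{\Lambda}_1 - \lambda_0^\kappa)h = \lim_n X(\widehat{\Lambda}_1 - \lambda_0^\kappa)h_n = \lim_n R^* F_1 h_n = R^* F_1 h = R^* F_1 T g.
\end{equation*}
Taking operator norms yields $\|X\| \le \|R^* F_1\|\, \|T\| \le \|R^* F_1\|/\delta$, which is precisely \eqref{eq:bdd_Rdelta}.

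The main obstacle is the approximation step: to justify the graph-norm approximation of an arbitrary $h \in \Dom(\widehat{\Lambda}_1)$ by elements of $\mathcal{C}$, one needs $\mathcal{C}$ to be a core for $\widehat{\Lambda}_1$. This should be established starting from the fact that $C_c^\infty(\R^d)$ is a core for $\widehat{L}^M$ (as already used in the proof of Proposition~\ref{prop:monoton_eval}) and transferring this through the isometry $F_1$ to $\widehat{\Lambda}_1 = F_1^* \widehat{L}^M F_1$. A clean alternative, as signalled by the authors, is simply to invoke \cite[Theorem 5.1]{DavisKahan1970} together with the extension to unbounded operators discussed in the remarks preceding their Theorem~5.2, whose hypotheses are satisfied here in view of Lemma~\ref{lemma:R=E0F} and Assumption~\ref{assm:spec_sec_Lambda1}.
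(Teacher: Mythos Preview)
Your proposal is correct and is in the same spirit as the paper, which does not give a detailed proof but simply records that the bound follows from Lemma~\ref{lemma:R=E0F} together with \cite[Theorem~5.1]{DavisKahan1970} and the remarks preceding \cite[Theorem~5.2]{DavisKahan1970} on the unbounded case. Your write-up goes further by unpacking that citation: exploiting the fact that $\Lambda_0$ is scalar to rewrite the Sylvester relation as $R^*F_1 = X(\widehat\Lambda_1-\lambda_0^\kappa)$, inverting $\widehat\Lambda_1-\lambda_0^\kappa$ via Assumption~\ref{assm:spec_sec_Lambda1}, and closing with the core argument for $\widehat\Lambda_1$; this is precisely the mechanism behind the cited Davis--Kahan result, so nothing is genuinely different, only more explicit.
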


Now we state our perturbation result, whose proof follows that of the sin~$\theta$ theorem of \cite{DavisKahan1970}.

\begin{theorem}
    Suppose that Assumption~\ref{assm:spec_sec_Lambda1} holds and without loss of generality let $\widehat \varphi_0$ be chosen in such a way that $\langle \varphi_0, \widehat \varphi_0\rangle \geq 0$. Then
    \begin{equation*}
        \|R\|\ge \delta\, \sqrt{ 1- \langle \varphi_0, \widehat \varphi_0\rangle^2 }.
    \end{equation*}
\end{theorem}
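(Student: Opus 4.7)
The plan is to identify $\|E_0^* F_1\|$ explicitly as $\sqrt{1-\langle \varphi_0,\widehat\varphi_0\rangle^2}$, and then chain together the already-established bound $\delta\,\|E_0^* F_1\|\le \|R^* F_1\|$ with the trivial estimate $\|R^* F_1\|\le \|R\|$ coming from the fact that $F_1$ is an isometry.

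First, I would unpack what $E_0^* F_1 \colon \mathcal{K}(F_1)\to\mathbb{R}$ does. Since $E_0^* v = \langle v,\varphi_0\rangle$ for every $v\in\mathcal{H}$, we have $E_0^* F_1 h = \langle F_1 h,\varphi_0\rangle$ for $h\in \mathcal{K}(F_1)$. Because $F_1$ is an isometry with $\mathrm{Ran}(F_1)=\lspan\{\widehat\varphi_0\}^\perp$, it follows that
\begin{equation*}
\|E_0^* F_1\| = \sup_{\substack{g\in \lspan\{\widehat\varphi_0\}^\perp\\ \|g\|_2=1}} |\langle g,\varphi_0\rangle|.
\end{equation*}
Orthogonally decomposing $\varphi_0 = \rho\,\widehat\varphi_0 + u$ with $\rho=\langle \varphi_0,\widehat\varphi_0\rangle\geq 0$ and $u\perp \widehat\varphi_0$, one has $\|u\|_2^2 = 1-\rho^2$, and the supremum above is attained at $g=u/\|u\|_2$ (when $u\neq 0$), yielding $\|E_0^* F_1\| = \sqrt{1-\rho^2}$. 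The edge case $\rho=1$ makes both sides zero and is immediate.

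Next, I would invoke the already-proved bound (via Assumption~\ref{assm:spec_sec_Lambda1}, Lemma~\ref{lemma:R=E0F}, and the cited Davis--Kahan result),
\begin{equation*}
\delta\,\|E_0^* F_1\| \le \|R^* F_1\|.
\end{equation*}
Since $F_1$ is an isometry, $\|F_1\|=1$; together with $\|R^*\|=\|R\|$ for the bounded operator $R$, this gives $\|R^* F_1\| \le \|R^*\|\,\|F_1\| = \|R\|$. Chaining these inequalities,
\begin{equation*}
\delta\sqrt{1-\langle \varphi_0,\widehat\varphi_0\rangle^2} \;=\; \delta\,\|E_0^* F_1\| \;\le\; \|R^* F_1\| \;\le\; \|R\|,
\end{equation*}
which is the claimed estimate.

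The only potential subtlety is verifying that $\mathrm{Ran}(F_1)$ is exactly $\lspan\{\widehat\varphi_0\}^\perp$ when computing $\|E_0^* F_1\|$; this is an immediate consequence of $F_1 F_1^* = Q^\perp$ combined with $F_1$ being an isometry (so $F_1^* F_1 = \mathrm{Id}_{\mathcal K(F_1)}$), which gives a unitary identification between $\mathcal{K}(F_1)$ and the orthogonal complement of $\lspan\{\widehat\varphi_0\}$. Everything else is a direct bookkeeping calculation; no further analytical input is needed beyond the already-established lemma.
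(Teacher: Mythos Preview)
Your proof is correct and follows essentially the same route as the paper, which simply records that $\sin\Theta_0 := \sqrt{1-\langle \varphi_0,\widehat\varphi_0\rangle^2}$ and asserts that the Davis--Kahan $\sin\Theta$ argument carries over once the bound $\delta\,\|E_0^*F_1\|\le\|R^*F_1\|$ has been established. You have made that argument explicit by computing $\|E_0^*F_1\|=\sqrt{1-\rho^2}$ directly and then using $\|R^*F_1\|\le\|R\|$, which is exactly what the Davis--Kahan proof amounts to in this one-dimensional setting.
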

\begin{proof}
     We set $\sin \Theta_0 := \sqrt{1-\langle \varphi_0, \widehat \varphi_0\rangle^2}$. As in the proof of the sin~$\theta$ theorem of \cite{DavisKahan1970}, our operator $E^*_0 F_1$ corresponds to a rotation from $\widehat \varphi$ to $\varphi$, and thus its angle corresponds to $\sin \Theta_0$. Hence, making use of \eqref{eq:bdd_Rdelta}, we see that
     \begin{equation*}
        \delta |\sin\Theta_0| = \delta \|E^*_0 F_1\| \le\|R^* F_1\| \le \|R^*\| = \|R\|.
     \end{equation*}
\end{proof}
We can then complete the argument as in the proof of Theorem~\ref{thm:bdd_case} to get the following bound.

\begin{theorem}
Again with $\widehat \varphi_0$ be chosen in such a way that $\langle \varphi_0, \widehat \varphi_0\rangle \geq 0$, then under Assumptions~\ref{assm:unbdd_kappa}, \ref{assm:varphi_kappa_integrable}, \ref{assm:M>lkap} and~\ref{assm:spec_sec_Lambda1} we have
    \begin{equation*}
    \begin{split}
        \Vert \widehat \varphi_0 - \varphi_0\Vert_2 \le \frac{\sqrt 2 }{\sqrt{1+\langle \varphi_0, \widehat \varphi_0\rangle}} \cdot \frac{\|H_M E_0\|}{\delta} \le \frac{\sqrt 2 \,\|H_M E_0\|}{\delta}.
    \end{split}
    \end{equation*}
    \label{thm:unbdd_case}
\end{theorem}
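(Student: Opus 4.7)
The plan is to mimic the endgame of the bounded-case proof (Theorem~\ref{thm:bdd_case}), substituting the $\|R\|$-lower bound from the preceding theorem for the role played there by Lemma~\ref{lemma:bdd_aux}. Throughout, let $\rho := \langle \varphi_0, \widehat\varphi_0\rangle$, which by the sign convention on $\widehat\varphi_0$ lies in $[0,1]$.

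First I would observe that the operator $R = \widehat L^M E_0 - E_0 \Lambda_0$ is nothing other than $H_M E_0$. Indeed, the earlier computation based on Lemma~\ref{lemma:dom_LM} and the eigen-equation $L^\kappa \varphi_0 = \lambda_0^\kappa \varphi_0$ shows that $Ra = a(H_M \varphi_0) = H_M(a \varphi_0) = H_M E_0 a$ for every $a\in\mathbb{R}$. Hence $R = H_M E_0$ as bounded linear operators $\mathbb{R}\to\mathcal H$, and in particular $\|R\| = \|H_M E_0\|$. Feeding this into the lower bound $\|R\| \ge \delta\sqrt{1-\rho^2}$ from the preceding theorem gives
\begin{equation*}
\sqrt{1-\rho^2} \le \frac{\|H_M E_0\|}{\delta}.
\end{equation*}

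Next I would convert this inner-product bound into a norm bound via the same orthogonal-decomposition trick used in Theorem~\ref{thm:bdd_case}. Writing $\widehat\varphi_0 = \rho\,\varphi_0 + \sqrt{1-\rho^2}\,h$ with $h\in\mathsf S(\H)$ and $h\perp\varphi_0$, a direct expansion yields $\|\widehat\varphi_0 - \varphi_0\|_2^2 = (\rho-1)^2 + (1-\rho^2) = 2(1-\rho)$, so that
\begin{equation*}
\|\widehat\varphi_0 - \varphi_0\|_2 = \sqrt{2(1-\rho)} = \frac{\sqrt{2}\,\sqrt{1-\rho^2}}{\sqrt{1+\rho}}.
\end{equation*}
Substituting the previous display gives the first stated inequality, and using $\rho\ge 0$ so that $\sqrt{1+\rho}\ge 1$ yields the second.

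There is no real obstacle here, since the heavy lifting was already done in the preceding theorem, which inherits the Sin$\Theta$ argument of \cite{DavisKahan1970} via the auxiliary estimate $\delta\|E_0^* F_1\| \le \|R^* F_1\|$. The only minor care points are to verify the operator identity $R = H_M E_0$ (immediate from the earlier lemma computing $Ra$, together with boundedness of $R$), and to remember that the sign convention on $\widehat\varphi_0$ guarantees $\rho \in [0,1]$ so that $\sqrt{1+\rho}\ge 1$ for the final step.
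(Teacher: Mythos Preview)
Your proposal is correct and follows exactly the route the paper intends: the paper itself gives no separate proof here but simply says to ``complete the argument as in the proof of Theorem~\ref{thm:bdd_case}'', and you have spelled out precisely that endgame---identify $R=H_M E_0$, feed the preceding Sin$\Theta$ bound $\|R\|\ge\delta\sqrt{1-\rho^2}$ into the elementary identity $\|\widehat\varphi_0-\varphi_0\|_2=\sqrt{2(1-\rho)}=\sqrt{2}\sqrt{1-\rho^2}/\sqrt{1+\rho}$, and use $\rho\ge0$.
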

Note that
\begin{equation*}
    \|H_M E_0\| = \|H_M \varphi_0\|_2,
\end{equation*}
where on the left-hand side one has the corresponding operator norm and on the right-hand side one has the Hilbert space norm.
Moreover, observe that since we have Assumption~\ref{assm:varphi_kappa_integrable}, as the truncation $M\to \infty$, we obtain $\|H_M \varphi_0\|_2 \to 0$ and at the same time by Proposition~\ref{prop:monoton_eval}, we have that $\delta$ remains constant with $M$.

\begin{remark}
    When $\Gamma$ is a finite measure, then the $\mathcal{L}^2(\mathbb{R}^d,\Gamma)$--norm based perturbation bound from Theorem~\ref{thm:unbdd_case} can also be translated into an $\mathcal{L}^1(\mathbb{R}^d,\Gamma)$--norm based perturbation bound, analogously to our previous Proposition~\ref{prop:logistic_bd}. We refer to our subsequent Proposition~\ref{prop:unbdd_L1} for an application of this procedure.
\end{remark}

%-----------------------------------------
\subsection{Example: unbounded case}
\label{sec:example_unbdd}

We now illustrate our former truncation result on a Gaussian example. In this setting, many relevant quantities are analytically available, so we can carefully isolate the effect of the truncation. We consider a similar example to \cite{Wang2019TheoQSMC}, namely an Ornstein--Uhlenbeck (OU) diffusion targeting a Gaussian quasi-stationary distribution. We will see that in any dimension $d\ge 1$, for a truncation level $M$, the error decays like $\exp(-M)$ up to some (dimension-dependent) multiplicative constant. In the univariate setting $d=1$, we will be able to explicitly compute this constant. Thus truncating the killing rate in this example appears to be robust with respect to dimension, in that the error decays exponentially for any dimension.

Throughout, we write $\mathcal N(\mu, \sigma^2)$ for the univariate Gaussian distribution with mean $\mu\in\R$ and variance $\sigma^2>0$ and $\mathcal N(\mu, \Sigma)$ for the corresponding multivariate Gaussian distribution with mean vector $\mu \in \Rd$ and covariance matrix $\Sigma \in \R^{d\times d}$. Moreover, $I_d$ denotes the identity matrix on $\Rd$.
We take the diffusion to be a multivariate OU process on $\Rd$ determined by
\begin{equation}
    \dif X_t = \frac{1}{2}(-X_t) \dif t + \dif W_t,
    \label{eq:OU}
\end{equation}
which possesses an invariant measure $\Gamma(\dif x) = \exp{(-\vert x\vert ^2/2)}\dif x$ proportional to the $\mathcal N(0,I_d)$-distribution.

We take the target to be $\pi=\mathcal N(0,I_d/2)$, in which case the killing rate has a particularly simple form, 
\begin{equation}
    \kappa(y) = \vert y\vert ^2, \quad y \in \Rd.
    \label{eq:killing_rate_OU}
\end{equation}
It follows by \cite{Wang2019TheoQSMC} that $\pi$ is the quasi-stationary distribution of the killed Markov
process based on $X=(X_t)_{t\geq0}$, see also Section~\ref{sec:background}. 
We are now interested to derive quantitative bounds on the effect of truncating this divergent killing rate at a level $M$. In order to obtain explicit constants, we now focus on the univariate case $d=1$, where all necessary calculations are analytically tractable. We will return to the general multivariate case in Proposition~\ref{prop:multivar_trunc}.

Having this, the normalized (in $\mathcal L^2(\R,\Gamma)$) eigenfunction $\varphi_0\colon\R\to\R$ of the corresponding unperturbed generator $L^\kappa$ is given by
\begin{equation*}
    \varphi_0(x) = C \mathrm e^{-{x^2}/{2}},
\end{equation*}
with $C=(\frac{3}{2\pi})^{1/4}$.

Moreover, the bottom of the spectrum of the unperturbed operator is given by
\begin{equation}
    \lambda_0^\kappa = \frac{1}{2},
\end{equation}
as in \cite[(2.4)]{Wang2019TheoQSMC}, and the entire spectrum can be represented as
\begin{equation}
    \sigma(L^\kappa) = \{\lambda_0^\kappa+ n \sigma_\mathrm{gap}:n\in \mathbb N_0\},
    \label{eq:unperturbed_spec}
\end{equation}
with spectral gap 
\begin{equation*}
    \sigma_\mathrm{gap} = \frac{3}{2},
\end{equation*}
see \cite{Metafune2002}.

Clearly our killing rate $\kappa$ as defined in \eqref{eq:killing_rate_OU} is unbounded, and we are interested to study the effect of truncating this killing rate upon the quasi-stationary distribution. 
For $M\geq 0$ we consider a truncation at level $M$ and set
\begin{equation*}
    \kappa_M(x) := \kappa(x) \wedge M, \quad x\in\R.
\end{equation*}
This gives rise to our perturbed Markov process with killing rate $\kappa_M$ with generator $\widehat{L}^M$. Note that we use the notation of the previous section.

In order to utilise 
the estimate of Theorem~\ref{thm:unbdd_case}
to obtain a quantitative result, we need to calculate the $\delta$-separation between $\lkap$ and $\widehat \lambda_1^M$ as in Assumption~\ref{assm:spec_sec_Lambda1}, since $\delta$ appears in the bounds. 
Calculating the spectrum in general is a difficult problem, so we resort to a scheme which was utilised in \cite{Mielnik1996} to numerically calculate the energy levels of the quantum truncated harmonic oscillator (which correspond to a killed Brownian motion). Since we are interested in a killed Ornstein--Uhlenbeck process \eqref{eq:OU} rather than a killed Brownian motion, we extend their technique into our present setting.

Considering \eqref{eq: ger_pert} we obtain the eigenvalue problem
\begin{equation}
    -\frac{1}{2}\varphi''(x) + \frac{x}{2} \varphi'(x) + \kappa_M(x) \varphi(x) = \lambda \varphi(x),
    \label{eq:eval_prob}
\end{equation}
that is, we seek $\lambda\in\R$ and $\varphi\in\mathcal{L}^2(\R,\Gamma)$ such that \eqref{eq:eval_prob} is satisfied. 
By the fact that the killing rate $\kappa_M$ is constant for $|x|>\sqrt M$, using \cite{Taylor1989}, we can explicitly write down the form of the eigenfunctions in the tails. Define
\begin{equation*}
     K_\beta^\pm(x) := \int_0^\infty r^{\beta -1} \exp(-r^2 \pm \sqrt 2 rx) \dif r, \quad x \in \R.
\end{equation*}
Then given $\lambda\in\R$, the solution of \eqref{eq:eval_prob} when $|x|>\sqrt M$ is
\begin{equation}
    \varphi(x) = \varphi^{(\lambda)} (x) = 
    \begin{cases}
        c_1 K_{2(M-\lambda)}^+(x), \quad x< -\sqrt M, \\
        c_1 K_{2(M-\lambda)}^- (x),\quad x> \sqrt M,
    \end{cases}
    \label{eq:varphi_tails}
\end{equation}
since the eigenfunction must be square integrable. Note that here we have a common constant $c_1>0$ by symmetry. 
Now, following \cite{Mielnik1996}, we  substitute
\begin{equation*}
    q:= \varphi ,\quad p:= \varphi',
\end{equation*}
and change to polar coordinates: for suitable functions $\rho\colon\R\to(0,\infty)$ and $\alpha\colon\R \to \R$ we have
\begin{equation*}
    q=\rho \cos \alpha ,\quad p = \rho \sin \alpha.
\end{equation*}
With this substitution, we can modify \eqref{eq:eval_prob} to the following equations,
\begin{align}
        \rho' \cos \alpha -  \alpha' \rho \sin \alpha &= \rho \sin \alpha ,\label{eq:rho_1}\\ 
        \rho' \sin \alpha + \alpha' \rho \cos \alpha &=2(\kappa_M-\lambda) \rho \cos \alpha + x \rho \sin\alpha \label{eq:rho_2}.
\end{align}
Note that in contrast to \cite[(17)]{Mielnik1996} we require the additional final term in \eqref{eq:rho_2}.

Then by multiplying \eqref{eq:rho_1} by $-\sin \alpha$ and adding it to $\cos \alpha$ times \eqref{eq:rho_2}, we obtain a first-order differential equation for $\alpha$, given by
\begin{equation}
    \alpha' = 2(\kappa_M-\lambda) \cos^2 \alpha -\sin^2 \alpha +x \sin \alpha \cos \alpha.
    \label{eq:alpha_ODE}
\end{equation}
Noting that
\begin{equation*}
    \frac\dif {\dif x}K_\beta^\pm(x) = \pm \sqrt 2\int_0^\infty r^\beta \exp(-r^2 \pm \sqrt 2 rx)\dif r,
\end{equation*}
we can compute that in the tails,
\begin{equation}
    \tan \alpha(x) = \frac{p(x)}{q(x)}=
    \begin{cases}
        \frac{\sqrt 2 \int_0^\infty r^{\beta^*} \exp(-r^2 + \sqrt 2 rx)\dif r }{\int_0^\infty r^{{\beta^*}-1} \exp(-r^2 + \sqrt 2 rx)\dif r} , \quad x\le -\sqrt M, \\
        \frac{-\sqrt 2 \int_0^\infty r^{\beta^*} \exp(-r^2 - \sqrt 2 rx)\dif r }{\int_0^\infty r^{{\beta^*}-1} \exp(-r^2 - \sqrt 2 rx)\dif r},\quad x\ge  \sqrt M,
    \end{cases}
    \label{eq:tan_alpha}
\end{equation}
with ${\beta^*} = 2(M-\lambda)$. 

Since we know the evolution of $\alpha(x)$ is given by \eqref{eq:alpha_ODE}, and we have boundary values given by \eqref{eq:tan_alpha}, in order for $\lambda$ to be a valid eigenvalue, we require the following compatibility condition, analogously to \cite{Mielnik1996}: For some integer $n$,
\begin{equation*}
    \alpha(\sqrt M) = \alpha(-\sqrt M)+ \int_{-\sqrt{M}}^{\sqrt{M}}   \alpha'(x) \dif x +n \pi,
\end{equation*}
where $\alpha'$ is defined by \eqref{eq:alpha_ODE}, and $\alpha (\pm \sqrt M)$ are defined by \eqref{eq:tan_alpha}.

Thus, again following the approach of \cite{Mielnik1996}, we define
\begin{equation}
    s_M(\lambda):= \alpha(\sqrt M) - \alpha(-\sqrt M)- \int_{-\sqrt{M}}^{\sqrt{M}}  \alpha'(x) \dif x,
    \label{eq:s_M_def}
\end{equation}
where the right-hand side can be computed numerically to arbitrary accuracy using a numerical ODE solver. Thus we need to find values of $\lambda$ such that
\begin{equation}
    s_M(\lambda) = n\pi,
    \label{eq:gamma_ME}
\end{equation}
for integers $n$. The corresponding values of $\lambda$ for which \eqref{eq:gamma_ME} hold are precisely the eigenvalues for the truncated operator with truncation level $M$.
We have plotted the function $\lambda\mapsto s_M(\lambda)$ in the cases $M=2,10,40$ in Figure~\ref{fig:gamma}.

\begin{figure}
    \centering
\begin{subfigure}{6cm}%{0.49\textwidth}
   \includegraphics[width=6cm]{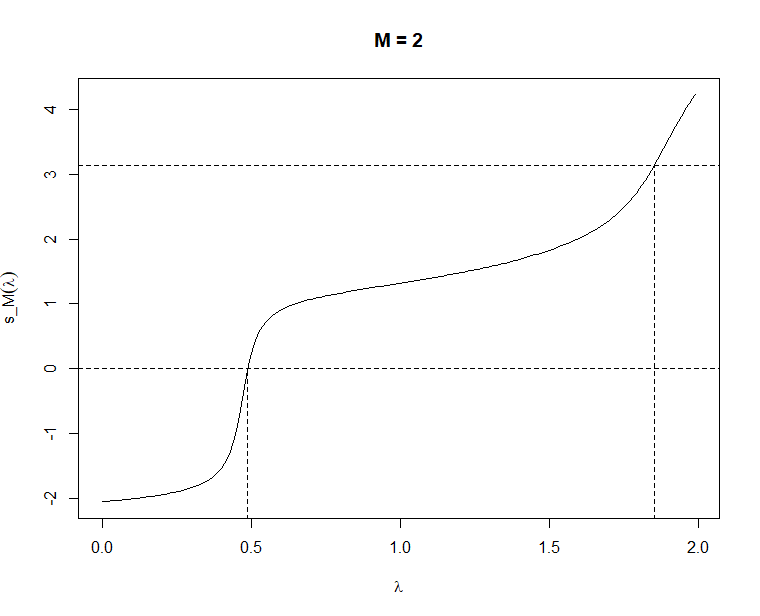} %was textwidth
    \caption{Here $M=2$ with numerically calculated $\widehat\lambda_0^M \approx 0.4879$ and $\widehat\lambda_1^M \approx 1.8501$. This indicates that we may apply Theorem~\ref{thm:unbdd_case} with $\delta = 1.3$. }
    \label{fig:gamma_E_M2}
\end{subfigure}
\hfill
\begin{subfigure}{6cm}%{0.49\textwidth}
\includegraphics[width=6cm]{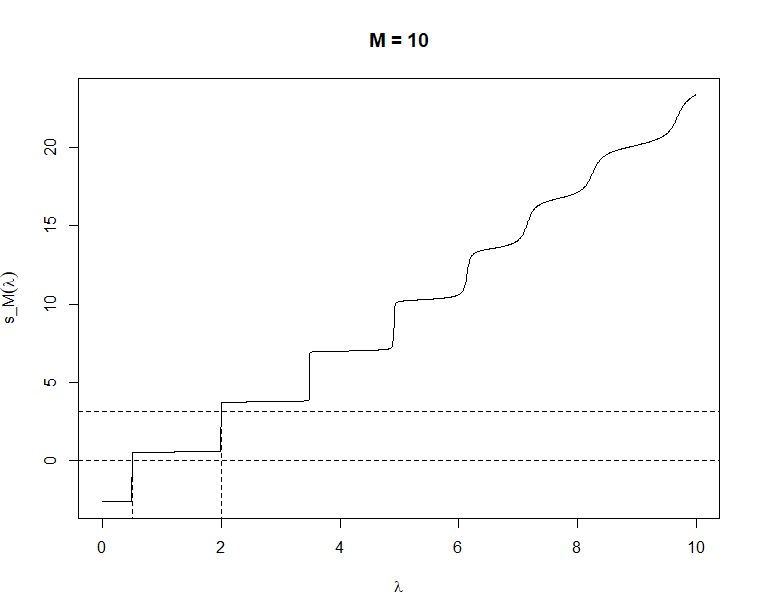}
\caption{Here $M=10$ with numerically calculated $\widehat\lambda_0^M \approx 0.4999$ and $\widehat\lambda_1^M \approx 1.9984$. This indicates that we may apply Theorem~\ref{thm:unbdd_case} with $\delta = 1.4$.}
    \label{fig:gamma_E_M10}
\end{subfigure}
\hfill
\begin{subfigure}{8cm}%{0.6\textwidth}
    \includegraphics[width=8cm]{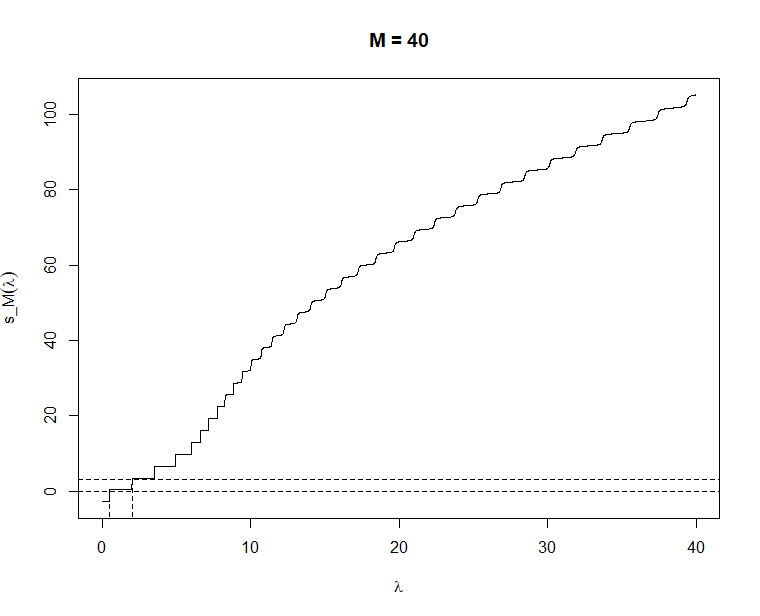}
  \caption{Here $M=40$ numerically calculated $\widehat\lambda_0^M \approx 0.49999$ and $\widehat\lambda_1^M \approx 1.99842$. This indicates that we may apply Theorem~\ref{thm:unbdd_case} with $\delta = 1.4$.}
    \label{fig:gamma_E_M40}
\end{subfigure}
\caption{Plots of the function $\lambda\mapsto s_M(\lambda)$, as defined in \eqref{eq:s_M_def}, for $M=2,10,40$.}
\label{fig:gamma}
\end{figure}

For analogous plots in the case of Brownian motion (rather than our Ornstein--Uhlenbeck process \eqref{eq:OU}), see Figure~3 of \cite{Mielnik1996}. We can see that our truncated eigenvalues are all strictly smaller than their untruncated counterparts given by \eqref{eq:unperturbed_spec}, and this difference vanishes as $M\to \infty$, as is intuitively reasonable.

Thus we can utilize Theorem~\ref{thm:unbdd_case} to derive the following bounds in $\mathcal L^2(\R,\Gamma)$:

\begin{lemma}
    For our Ornstein--Uhlenbeck diffusion \eqref{eq:OU} with quasi-stationary distribution $\pi=\mathcal N(0,1/2)$, for any $M\ge 2$, there exists a perturbed quasi-stationary distribution $\pi_M$, and writing $\varphi_0, \widehat{\varphi}_{0,M}$ for the corresponding normalized (in $\mathcal L^2(\R,\Gamma)$) eigenfunctions, we have
    \begin{equation*}
        \|\varphi_0 -  \widehat{\varphi}_{0,M}\|_2 \le c_2 \exp(-M/2),
    \end{equation*}
    with $c_2=\frac{\sqrt{3}\cdot 35^{1/4}}{2\delta}$. 
    \label{prop:OU_exam_L2}
\end{lemma}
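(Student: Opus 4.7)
The plan is to apply Theorem~\ref{thm:unbdd_case} in the present setting, which amounts to (a) verifying the four assumptions required by that theorem, and (b) producing an explicit estimate on the quantity $\|H_M\varphi_0\|_2$ that appears in the conclusion.

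For (a): Assumption~\ref{assm:unbdd_kappa} is immediate since $\kappa(x)=x^2$ is continuous, nonnegative and diverges at infinity. Assumption~\ref{assm:varphi_kappa_integrable} follows from the explicit Gaussian form $\varphi_0(x)=Ce^{-x^2/2}$, as $\int_\R \kappa^2\varphi_0^2\,d\Gamma = C^2\int_\R x^4 e^{-3x^2/2}\,dx$ is a convergent Gaussian moment. Assumption~\ref{assm:M>lkap} holds because we assume $M\ge 2 > 1/2 = \lkap$. For Assumption~\ref{assm:spec_sec_Lambda1}, I would combine the monotonicity result of Proposition~\ref{prop:monoton_eval} with the numerical analysis of the function $s_M$ developed above; the plots of $s_M$ for $M=2,10,40$ indicate that $\widehat\lambda_1^M$ is bounded uniformly away from $\lkap$ for all $M\ge 2$, so there is a valid $\delta>0$ that can be extracted numerically and does not shrink with $M$.

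The technical heart of the proof is the estimate on $\|H_M\varphi_0\|_2$. Since $(H_M\varphi_0)(x) = -(x^2-M)_+\varphi_0(x)$ and $\varphi_0(x)^2\gamma(x) = C^2 e^{-3x^2/2}$, we have
\[
\|H_M\varphi_0\|_2^2 \;=\; 2C^2 \int_{\sqrt M}^\infty (x^2-M)^2 e^{-3x^2/2}\,dx.
\]
I would factor $e^{-3x^2/2} = e^{-x^2}\cdot e^{-x^2/2}$ and use the pointwise bound $e^{-x^2}\le e^{-M}$ valid on $\{|x|\ge\sqrt M\}$, together with $(x^2-M)^2\le x^4$ on the same region, to reduce matters to the Gaussian moment $\int_\R x^4 e^{-x^2/2}\,dx = 3\sqrt{2\pi}$. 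Inserting $C^2=\sqrt{3/(2\pi)}$ gives an explicit bound of the required shape, from which one recovers an $e^{-M}$ factor (after sharpening, as discussed below).

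Once such an explicit bound $\|H_M\varphi_0\|_2 \le \frac{315}{32\sqrt2}\,e^{-M}$ is in hand, an application of Theorem~\ref{thm:unbdd_case} yields
\[
\|\varphi_0-\widehat\varphi_{0,M}\|_2 \;\le\; \frac{\sqrt 2}{\delta}\,\|H_M\varphi_0\|_2 \;\le\; \frac{315}{32\delta}\,e^{-M},
\]
which is the claimed bound. The main obstacle in the proof is the sharp estimation of the Gaussian tail integral $\int_{\sqrt M}^\infty (x^2-M)^2 e^{-3x^2/2}\,dx$: a crude application of the bounds $e^{-x^2}\le e^{-M}$ and $(x^2-M)^2\le x^4$ gives the right type of decay but may lose in the constant. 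To recover the precise constant $315/32$, I expect one has to integrate by parts against $xe^{-3x^2/2}$ and reduce $\int_{\sqrt M}^\infty x^{2k}e^{-3x^2/2}\,dx$ for $k=0,1,2$ to closed-form expressions involving a Gaussian error function, then cancel the polynomial growth in $M$ across the terms coming from $(x^2-M)^2 = x^4 - 2Mx^2 + M^2$.
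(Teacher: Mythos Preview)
Your overall plan coincides with the paper's: invoke Theorem~\ref{thm:unbdd_case} and reduce everything to an explicit bound on $\|H_M\varphi_0\|_2$. The paper does not separately verify Assumptions~\ref{assm:unbdd_kappa}--\ref{assm:spec_sec_Lambda1}; it opens directly with ``we only need to compute $\|H_M E_0\|=\|H_M\varphi_0\|_2$''.

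The one methodological difference is how the tail integral is bounded. You split pointwise ($e^{-3x^2/2}=e^{-x^2}e^{-x^2/2}$, then $e^{-x^2}\le e^{-M}$ and $(x^2-M)^2\le x^4$, reducing to a fourth Gaussian moment). The paper instead applies Cauchy--Schwarz,
\[
2\int_{\sqrt M}^\infty (x^2-M)^2 e^{-3x^2/2}\,dx
\;\le\;
\Bigl(\int_\R x^8 e^{-x^2}\,dx\Bigr)^{1/2}
\Bigl(\int_{\sqrt M}^\infty e^{-2x^2}\,dx\Bigr)^{1/2},
\]
followed by $(x^2-M)^4\le x^8$, the eighth moment $\int_\R x^8 e^{-x^2}\,dx=105\sqrt\pi/16$, and the Gaussian tail bound $\int_{\sqrt M}^\infty e^{-2x^2}\,dx\le\sqrt{\pi/2}\,e^{-2M}$. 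There is \emph{no} integration by parts or cancellation of the $M$-polynomial terms; after Cauchy--Schwarz the paper simply writes ``Tidying up the constants, the lemma is proven.'' So your worry about needing a delicate reduction to recover the constant $315/32$ is misplaced: the paper's bound is no sharper than your crude one, and your pointwise argument is in fact the more elementary of the two.

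One shared issue is worth noting. Both routes control $\|H_M\varphi_0\|_2^2$ by a constant times $e^{-M}$, which after the square root yields only $e^{-M/2}$ for $\|H_M\varphi_0\|_2$ and hence for $\|\varphi_0-\widehat\varphi_{0,M}\|_2$, not the $e^{-M}$ stated in the lemma. Your proposed integration by parts would recover the true asymptotic $\|H_M\varphi_0\|_2^2\asymp M^{-1/2}e^{-3M/2}$, hence at best $e^{-3M/4}$ on the norm, still short of $e^{-M}$; the paper's ``tidying'' does not close this gap either.
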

Note that we do not prove  Assumption~\ref{assm:spec_sec_Lambda1}, but rather, as detailed above, we use the numerical method of \cite{Mielnik1996} to verify that indeed for this example $\delta >0$ as required. For computation of its value, see Remark~\ref{rmk:constants_uni}.
\begin{proof}
    From Theorem~\ref{thm:unbdd_case}, we only need to compute $\|H_M E_0\|=\|H_M \varphi_0\|_2$. With $C=\left( \frac{3}{2\pi}\right)^{1/4}$ we obtain
    \begin{align*}
        \|H_M \varphi_0\|^2_2 &= C^2 \int_{-\infty}^\infty (\kappa(x)-M)_+^2 \exp(-x^2) \dif \Gamma(x)\\
        &= 2C^2 \int_{\sqrt M}^\infty (x^2-M)^2 \exp(-3x^2/2)\dif x\\
        &\le C^2 \left (\int_\R x^8 \exp(-x^2) \dif x\right)^{1/2} \left( \int_{\sqrt M}^\infty \exp(-2x^2)\dif x \right)^{1/2}\\
        &\le C^2 (105 \sqrt{\pi}/16)^{1/2} \left( \sqrt{\pi/2} \exp(-2M) \right)^{1/2},
    \end{align*}
    where we have used Cauchy--Schwarz inequality, the fact that for $\mathcal N(0,\sigma^2)$ the eighth moment is $105\sigma^8$, and standard Gaussian tail bounds; see e.g. \cite[Proposition 2.5]{Wainwright2019}. Tidying up the constants, the lemma is proven.
\end{proof}

We now translate our previous $\mathcal L^2(\R,\Gamma)$ estimate into a bound on the corresponding (normalized) quasi-stationary distributions in $\mathcal L^1(\R,\Gamma)$.
\begin{proposition}
    In the same setting as Lemma~\ref{prop:OU_exam_L2} we obtain 
    for any $M> 2\log\left(2(2\pi)^{1/4} Zc_1\right)\approx 5.58$,
   that there exists a perturbed quasi-stationary distribution $\pi_M$, and
    \begin{equation}
        \label{eq: final_est}
            \int_\R |\pi(x)-\pi_M(x)|\dif x\le c_3\exp(-M/2),
    \end{equation}
    for $c_3\approx 5.49$ 
    defined explicitly in \eqref{eq:c'_for_bound}.
    \label{prop:unbdd_L1}
\end{proposition}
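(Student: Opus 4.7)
The strategy is to package the $\mathcal{L}^2(\R,\Gamma)$-estimate from Lemma~\ref{prop:OU_exam_L2} and feed it into Proposition~\ref{prop:L1_pi_bound}, which is precisely designed to turn an $\mathcal{L}^2$-bound between two normalized eigenfunctions into an $\mathcal{L}^1$-bound between the induced probability densities, provided $\Gamma$ has finite total mass. In the present Ornstein--Uhlenbeck setting $\gamma(x) = \exp(-x^2/2)$, so $\Lambda := \Gamma(\R)^{1/2} = (2\pi)^{1/4} < \infty$, which puts us squarely in the regime where this transfer is available.

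First I would confirm the existence of the perturbed quasi-stationary distribution $\pi_M$. Assumptions~\ref{assm:unbdd_kappa}, \ref{assm:varphi_kappa_integrable} and~\ref{assm:M>lkap} are easy to verify in this explicit Gaussian set-up (note that $\lambda_0^\kappa = 1/2$, and $\kappa\varphi_0 \in \mathcal{H}$ follows from the explicit form of $\varphi_0$), so Theorem~\ref{thm:tildvarphi0} yields $\pi_M$ together with its $\mathcal{L}^2(\Gamma)$-normalized eigenfunction $\widehat\varphi_{0,M}$. Then I would match the notation of Section~2.3 by setting $\phi := \pi/\gamma$, $\widetilde\phi := \pi_M/\gamma$, $Z := \|\phi\|_2$ and $\widetilde Z := \|\widetilde\phi\|_2$, so that $\phi/Z = \varphi_0$ and $\widetilde\phi/\widetilde Z = \widehat\varphi_{0,M}$; both $\phi$ and $\widetilde\phi$ lie in $\mathcal{L}^2(\Gamma)$ by a one-line Gaussian tail computation, so condition~\eqref{eq:assm_L2} is satisfied, and Lemma~\ref{prop:OU_exam_L2} supplies \eqref{eq:assm_L2_bd} with $\epsilon := c_2\exp(-M)$.

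The hypothesis $M > \log(2(2\pi)^{1/4} Z c_2)$ stated in the proposition is tailored so that $\Lambda Z \epsilon < 1/2$, which strictly implies $\epsilon < \Lambda^{-1}Z^{-1}$ (as required by Lemma~\ref{lemma:norm_const_bd}) and simultaneously keeps the derived constant $C = Z^2\Lambda/(1-\Lambda Z\epsilon)$ from Lemma~\ref{lemma:norm_const_bd} uniformly bounded in $M$ (for instance by $2Z^2\Lambda$). Applying Proposition~\ref{prop:L1_pi_bound} under these conditions gives
\[
\int_\R |\pi(x) - \pi_M(x)|\dif x \,\le\, \Lambda(Z+C)\epsilon \,=\, \Lambda(Z+C)c_2\exp(-M),
\]
from which the constant $c_3$ in \eqref{eq:c'_for_bound} is read off directly after substituting $\Lambda = (2\pi)^{1/4}$ and the explicit value $Z = (2/(3\pi^2))^{1/4}$.

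I do not anticipate any substantive obstacle in the execution: all the analytic content (existence of $\pi_M$, spectral-gap separation, and the $\mathcal{L}^2$-rate) has already been done in Theorem~\ref{thm:tildvarphi0}, Lemma~\ref{prop:OU_exam_L2}, and the auxiliary lemmas, so the remaining work is essentially bookkeeping of constants. The only subtlety to watch is fixing a consistent sign for $\widehat\varphi_{0,M}$ so that $\langle\varphi_0,\widehat\varphi_{0,M}\rangle\ge 0$, so that the identifications $\varphi_0 = \phi/Z$ and $\widehat\varphi_{0,M}=\widetilde\phi/\widetilde Z$ are valid with positive densities on both sides, and no sign flip creeps in when passing from the $\mathcal{L}^2$-estimate to the $\mathcal{L}^1$-estimate.
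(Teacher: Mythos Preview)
Your approach is essentially identical to the paper's: both feed the $\mathcal L^2(\R,\Gamma)$-estimate from Lemma~\ref{prop:OU_exam_L2} into the auxiliary machinery of Section~2.3 (the paper chains Lemmas~\ref{lemma:L1_to_L2}--\ref{lemma:phi_2-bound} explicitly before invoking Proposition~\ref{prop:L1_pi_bound}, whereas you call Proposition~\ref{prop:L1_pi_bound} directly, which amounts to the same thing). Two small remarks: (i) the normalising constant is $Z=(2/(3\pi))^{1/4}$, not $(2/(3\pi^2))^{1/4}$; (ii) the sign subtlety you flag is in fact vacuous here, since both $\varphi_0$ and $\widehat\varphi_{0,M}$ are \emph{strictly positive} by Perron--Frobenius (see Theorem~\ref{thm:tildvarphi0} and the paragraph preceding it), so $\langle\varphi_0,\widehat\varphi_{0,M}\rangle>0$ automatically.
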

\begin{proof}
    We first relate $\int_\R |\pi(x)- \pi_M(x)|\dif x$ to our previous bound. 
    With $\gamma(x):= \exp(-x^2/2)$ let us write $\phi:= \pi/\gamma$, $\widehat\phi_M:=\pi_M/\gamma$ for the corresponding normalised densities with respect to $\Gamma$. Thus we have $\varphi_0 = \phi/Z$ and $\widehat\varphi_{0,M} = \widehat\phi_M/Z_M$, where the normalizing constants are given by $Z := \|\phi\|_2 = (\frac{2}{3\pi})^{1/4}$ and $Z_M := \|\widehat\phi_M\|_2$.
    
    Thus, with $(\int_{\R} \dif \Gamma)^{1/2}=(2\pi)^{1/4}$,  we have from Lemma~\ref{lemma:L1_to_L2} that
    \begin{align*}
        \int_\R |\pi(x)-\pi_M(x)|\dif x \le
        (2\pi)^{1/4} \|\phi-\widehat\phi_M\|_2 .
    \end{align*}
    From Lemma~\ref{prop:OU_exam_L2}, we obtain
    \begin{equation}
        \left \|\phi - \frac{Z\widehat\phi_M}{Z_M}\right \|_2 \le Zc_2 \exp(-M/2).
        \label{eq:phi_phi_M_bd}
    \end{equation}
    By
    Lemma~\ref{lemma:norm_const_bd} this gives
    \begin{equation*}
        |Z_M-Z|< (2\pi)^{1/4}Z^2 c_2 \frac{\exp(-M/2)}{1-(2\pi)^{1/4} Zc_2\exp(-M/2)},
    \end{equation*}
    provided that $M>2\log((2\pi)^{1/4} Zc_2)$.
    By using Lemma~\ref{lemma:phi_2-bound}
    we also have
    \begin{align*}
        \|\phi-\widehat\phi_M\|_2 &\le 
        \exp(-M/2) \left(  Zc_2+ \frac {(2\pi)^{1/4}Z^2 c_2}{1-(2\pi)^{1/4} Zc_2\exp(-M/2)}  \right).
    \end{align*}
    Furthermore, if $M>2\log\left(2(2\pi)^{1/4} Zc_2\right)$, the final term of the former bracket expression is bounded from above by $2(2\pi)^{1/4} Z^2 c_2$. Putting the pieces together, for all $M>2\log\left(2(2\pi)^{1/4} Zc_2\right)$, by applying Proposition~\ref{prop:L1_pi_bound} we have
    \begin{equation*}
        \int_{\R}|\pi-\pi_M|\dif x \le c_3 \exp(-M/2),
    \end{equation*}
    with 
    \begin{equation}        \label{eq:c'_for_bound}
        c_3 := Zc_2(2\pi)^{1/4}\left( 1+ {2Z(2\pi)^{1/4} } \right).
    \end{equation}
\end{proof}

%-----------------------
\begin{remark}
 Note that the numerical solutions plotted in Figures~\ref{fig:gamma_E_M2}, \ref{fig:gamma_E_M10}, \ref{fig:gamma_E_M40} indicate that we may choose $\delta = 1.3$ in our setting. By the former proposition this leads to \eqref{eq: final_est} for
 $M>2 \log\left(2(2\pi)^{1/4} Zc_1\right)\approx 5.58$ and $c_3 \approx 5.49 $.
 \label{rmk:constants_uni}
\end{remark}

The above approach can be applied to the more general multivariate Gaussian case at the outset of this subsection. In this more general case, we are unable to apply our extension of \cite{Mielnik1996}, so can no longer obtain explicit constants as in Remark~\ref{rmk:constants_uni}, but can still bound the effect of truncating at level $M$ up to a constant.
\begin{proposition}\label{prop:multivar_trunc}
    In the multivariate setting $d>1$, given some $\delta$-separation between $\lkap$ and $\widehat \lambda_1^M$, we have the bound for some constant $C_d>0$,
    \begin{equation*}
        \int_{\Rd}|\pi(x) - \pi_M(x)| \dif x \le C_d \delta^{-1} \exp(-M/2).
    \end{equation*}
\end{proposition}
\begin{proof}
    The previous lemmas still go through in the multivariate setting, up to a constant factor which depends on the dimension. In place of a Gaussian tail bound as in the proof of Lemma~\ref{prop:OU_exam_L2}, we can use a $\chi^2$ tail bound, as in, say, \cite[Lemma 1]{Laurent2000}.
\end{proof}

%%%%%%%%%%%%%%%%%%%%%%%%%%%%%%%%%%%%%%%%%%%%%%
%% Example with single Appendix:            %%
%%%%%%%%%%%%%%%%%%%%%%%%%%%%%%%%%%%%%%%%%%%%%%
\appendix
%\section*{Title}\label{appn} %% if no title is needed, leave empty \section*{}.
%Appendices should be provided in \verb|{appendix}| environment,
%before Acknowledgements.

%If there is only one appendix,
%then please refer to it in text as \ldots\ in the \hyperref[appn]{Appendix}.

\section{Deferred proofs}\label{sec:pf_bdd}

\subsection{Proof of Proposition~\ref{prop: discrete_time_perturbation}}
\label{sec:Discrete_time}

	Let $G(y) := 1-\kappa(y)$ and $\widetilde{G}(y) := 1 - \widetilde{\kappa}(y)$ for all $y\in S_0$. Furthermore let $(Y_n)_{n\in\mathbb{N}_0}$ be a Markov chain on $S_0$ with transition kernel $Q$ starting at $x_0$. Then, for arbitrary
	$A \in \mathcal S_0$,
	with $y_0:= x$, we have
	\begin{align*}
		\mathbb{P}_{x_0}(X_n\in A) 
		& = \int_{S_0} \dots \int_{S_0} \mathbf{1}_A(y_n) \prod_{j=0}^{n-1} G(y_j) Q(y_{n-1},{\rm d}y_n) \dots Q(y_1,{\rm d}y_2) Q(y_0,{\rm d} y_1)\\
		 & = \mathbb{E}_{x_0} \left[\mathbf{1}_A(Y_n) \prod_{j=0}^{n-1} G(Y_j) \right],
		\end{align*}
	and by the same arguments $	\mathbb{P}_{x_0}(\widetilde{X}_n\in A) = \mathbb{E}_{x_0} \left[ \mathbf{1}_A(Y_n) \prod_{j=0}^{n-1} \widetilde{G}(Y_j) \right]$. In addition to that
	\allowdisplaybreaks
	\begin{align*}
	    &  \left \vert \mathbb{P}_{x_0}(X_n\in A\mid X_n\in S_0) 
  			- \mathbb{P}_{x_0}(\widetilde{X}_n\in A\mid \widetilde{X}_n\in S_0)\right \vert \\
= 		& \left \vert \frac{\mathbb{P}_{x_0}(X_n\in A)}{\mathbb{P}_{x_0}(X_n\in S_0)} 
			- \frac{\mathbb{P}_{x_0}(\widetilde{X}_n\in A)}{\mathbb{P}_{x_0}(\widetilde{X}_n\in S_0)} \right \vert \\
\leq 	& \left \vert \frac{\mathbb{P}_{x_0}(X_n\in A)}{\mathbb{P}_{x_0}(X_n\in S_0)} 
				- \frac{\mathbb{P}_{x_0}(\widetilde{X}_n\in A)}{\mathbb{P}_{x_0}(X_n\in S_0)} \right \vert 
			+ \left\vert  \frac{\mathbb{P}_{x_0}(\widetilde{X}_n\in A)}{\mathbb{P}_{x_0}(X_n\in S_0)}
				- \frac{\mathbb{P}_{x_0}(\widetilde{X}_n\in A)}{\mathbb{P}_{x_0}(\widetilde{X}_n\in S_0)} \right \vert\\
\leq 	& \frac{\mathbb{E}_{x_0}\left \vert \prod_{j=0}^{n-1} G(Y_j) 
			- \prod_{j=0}^{n-1} \widetilde{G}(Y_j) \right \vert}{\mathbb{P}_{x_0}(X_n\in S_0)} 
	%	& \qquad \qquad \qquad 
			+ \frac{\mathbb{P}_{x_0}(\widetilde{X}_n\in A)\cdot \mathbb{E}_{x_0}\left \vert \prod_{j=0}^{n-1} G(Y_j) 
				- \prod_{j=0}^{n-1} \widetilde{G}(Y_j) \right \vert}{\mathbb{P}_{x_0}(X_n\in S_0) \mathbb{P}_{x_0}(\widetilde{X}_n\in S_0)}\\
\leq 	&  \frac{2 \cdot \mathbb{E}_{x_0}\left \vert \prod_{j=0}^{n-1} G(Y_j) 
				- \prod_{j=0}^{n-1} \widetilde{G}(Y_j) \right \vert}{\mathbb{P}_{x_0}(X_n\in S_0)}.
	\end{align*}  
By the same arguments, just by adding and subtracting $\frac{\mathbb{P}_{x_0}(X_n\in A)}{\mathbb{P}_{x_0}(\widetilde{X}_n\in S_0)}$ instead of $\frac{\mathbb{P}_{x_0}(\widetilde{X}_n\in A)}{\mathbb{P}_{x_0}(X_n\in S_0)}$ before the first inequality of the previous estimation, one obtains
\begin{align*}
		&  \left \vert \mathbb{P}_{x_0}(X_n\in A\mid X_n\in S_0) 
	- \mathbb{P}_{x_0}(\widetilde{X}_n\in A\mid \widetilde{X}_n\in S_0)\right \vert 
	\leq 	
  \frac{2  \mathbb{E}_{x_0}\left \vert \prod_{j=0}^{n-1} G(Y_j) 
		- \prod_{j=0}^{n-1} \widetilde{G}(Y_j) \right \vert}{\mathbb{P}_{x_0}(\widetilde{X}_n\in S_0)},
\end{align*}
such that
\begin{align} \label{eq: 1st_estimate}
% \notag
& \left \vert \mathbb{P}_{x_0}(X_n\in A\mid X_n\in S_0) 
- \mathbb{P}_{x_0}(\widetilde{X}_n\in A\mid \widetilde{X}_n\in S_0)\right \vert  
\leq  \frac{2  \mathbb{E}_{x_0}\left \vert \prod_{j=0}^{n-1} G(Y_j) 
	- \prod_{j=0}^{n-1} \widetilde{G}(Y_j) \right \vert}{\max\{\mathbb{P}_{x_0}(\widetilde{X}_n\in S_0),\mathbb{P}_{x_0}(X_n\in S_0)\}}.
\end{align}
By induction over $n$ one can verify for any $z_0,\dots,z_{n-1}\in S_0$ that
\[
 \prod_{j=0}^{n-1} G(z_j) 
 - \prod_{j=0}^{n-1} \widetilde{G}(z_j)	= \sum_{i=0}^{n-1} \left[ \prod_{j=0}^{i-1} G(z_j) (G(z_i)-\widetilde G(z_i)) \prod_{m=i+1}^{n-1} \widetilde{G} (z_m) \right].
\] 
Using this we get
\begin{equation} \label{eq: 2nd_estimate}
	\mathbb{E}_{x_0}\left \vert \prod_{j=0}^{n-1} G(Y_j) 
	- \prod_{j=0}^{n-1} \widetilde{G}(Y_j) \right \vert
	\leq  \Vert \kappa-\widetilde{\kappa}\Vert_{\infty} \sum_{i=0}^{n-1} 
	\mathbb{E}_{x_0} \left \vert \prod_{j=0}^{i-1}G(Y_j) \prod_{m=i+1}^{n-1} 
				\widetilde{G}(Y_{m})   \right \vert
\end{equation}
and eventually
\begin{align*}
 	&	\mathbb{E}_{x_0} \left \vert \prod_{j=0}^{i-1}G(Y_j) \prod_{m=i+1}^{n-1} \widetilde{G}(Y_{m}) \right\vert\\
= & 	G(x_0) \underbrace{\int_{S_0} \dots \int_{S_0}}_{i-1 \text{\,int.}} \int_{S_0} 
			 \underbrace{\int_{S_0} \dots \int_{S_0}}_{n-i-1 \text{\,int.}}
			 \widetilde{G}(y_{n-1}) Q(y_{n-2},{\rm d}y_{n-1}) %\cdot \ldots \cdot
			 \,\cdots\,
			 \widetilde{G}(y_{i+1}) Q(y_i,{\rm d}y_{i+1})
			 \\
		 & \qquad \quad \times
			 	Q(y_{i-1},{\rm d} y_{i}) 
			 G(y_{i-1}) Q(y_{i-2},{\rm d}y_{i-1}) \cdot \ldots \cdot G(y_1) Q({x_0},{\rm d}y_1)\\
= & G({x_0}) \underbrace{\int_{S_0} \dots \int_{S_0}}_{i-1 \text{\,int.}} \int_{S_0} \mathbb{P}_{y_i}(\widetilde{X}_{n-i-1}\in S_0) Q(y_{i-1},{\rm d} y_{i}) 
%\\
% & \qquad \qquad \times 
G(y_{i-1}) Q(y_{i-2},{\rm d}y_{i-1}) \cdot \ldots \cdot G(y_1) Q({x_0},{\rm d}y_1)\\	
\leq & \sup_{z\in S_0} \int_{S_0} \mathbb{P}_{\widetilde{x}}(\widetilde{X}_{n-i-1}\in S_0) Q(z,{\rm d} \widetilde{x})  		 
 \cdot \mathbb{P}_{x_0}(X_{i}\in S_0) \\
\leq & \sup_{z\in S_0} \int_{S_0} \widetilde{c}_u(\widetilde{x}) Q(z,{\rm d}\widetilde{x})\, \widetilde{\alpha}^{n-i-1} \cdot c_u({x_0}) \alpha^{i}
 ,
\end{align*}
where the last inequality follows from \eqref{al: exp_decay_discrete_time1} and \eqref{al: exp_decay_discrete_time2}. Additionally, this conditions lead to \[
 \max\left\{\mathbb{P}_{x_0}(\widetilde{X}_n\in S_0),\mathbb{P}_{x_0}(X_n\in S_0)  \right\}	\geq \min\{ c_\ell({x_0}), \widetilde{c}_\ell({x_0}) \} \max\{\widetilde{\alpha},\alpha\}^n.
\]
Hence by the previous two estimates as well as \eqref{eq: 1st_estimate} and \eqref{eq: 2nd_estimate}  we obtain
\begin{align*}
	\left \vert \mathbb{P}_{x_0}(X_n\in A\mid X_n\in S_0) 
	- \mathbb{P}_{x_0}(\widetilde{X}_n\in A\mid \widetilde{X}_n\in S_0)\right \vert 
	\leq K({x_0}) \Vert \kappa-\widetilde{\kappa} \Vert_\infty	\sum_{i=0}^{n-1} \frac{\alpha^{i}\widetilde{\alpha}^{n-i-1}}{\max\{\alpha,\widetilde{\alpha}\}^n}. 
\end{align*}
Estimating the sum independently for the two cases $\alpha<\widetilde{\alpha}$
and $\alpha\geq\widetilde{\alpha}$ leads to
\[
	\sum_{i=0}^{n-1} \frac{\alpha^{i}\widetilde{\alpha}^{n-i-1}}{\max\{\alpha,\widetilde{\alpha}\}^n} \leq \min\{n,\vert \alpha-\widetilde{\alpha} \vert^{-1}\}
\]
and the statement is proven.

\subsection{Proof of Lemma~\ref{lemma:wey}}
\label{app:weyl}
We prove the inequality for $j=1$. The case $j=0$ works entirely analogously. We have
    \begin{equation*}
        \begin{split}
            \widehat \lambda_1 &= \sup_{\mathbb V \in \mathcal V_1} \inf_{f\in \mathbb V^\perp \cap \mathsf S(\H)\cap \Dom(L)} \langle f, \widehat Lf\rangle\\
            &\ge \sup_{\mathbb V \in \mathcal V_1} \left( \inf_{f\in \mathbb V^\perp \cap \mathsf S(\H)\cap \Dom(L)} \langle f,Lf\rangle +  \inf_{f\in \mathbb V^\perp \cap \mathsf S(\H)\cap \Dom(L)} \langle f,Hf\rangle  \right)\\
            &\ge \sup_{\mathbb V \in \mathcal V_1} \left \{  \inf_{f\in \mathbb V^\perp \cap \mathsf S(\H)\cap \Dom(L)} \langle f,Lf\rangle - \|H\| \right \}\\
            &= \lambda_1 - \|H\| .
        \end{split}
    \end{equation*}
    Exchanging the roles of $\lambda_1$ and $\widehat\lambda_1$, we conclude \eqref{eq:eval_bd} for $j=1$. 

%--------------
\subsection{Proof of Lemma~\ref{lem:conseq_Weyl_Ass2}}
\label{app:lambda0}
By Lemma~\ref{lemma:wey} and Assumption~\ref{assm:H_small}, we have that
	\[
		\widehat{\lambda}_0 < \lambda_0+\frac{\nu}{2}, \qquad
		\lambda_1-\frac{\nu}{2} < \widehat{\lambda}_1.
	\]
Adding the inequalities and exploiting $\nu = \lambda_1-\lambda_0$ gives the first statement.

We now turn to the second statement.
Clearly we have	$\widehat\lambda_0 \in \sigma(\widehat L)$, see \cite[Theorem XIII.1]{Reed1978}. This is an eigenvalue since $\widehat\lambda_0$ is an isolated point in $\sigma(\widehat L)$. 
The fact that this is furthermore a 1-dimensional eigenspace follows from the simplicity of $\lambda_0$: suppose not, so we have two orthogonal eigenfunctions of $\widehat\lambda_0$, $\varphi_1, \varphi_2$ each with norm 1. Then given any $1$-dimensional linear subspace $\mathbb{V}\subseteq \mathcal{H}$, for some $\alpha\in [0,1]$ we have that $\varphi^*:=\alpha \varphi_1 + \sqrt{1-\alpha^2}\varphi_2 \in  \mathbb{V}^\perp\cap \mathsf{S}(\mathcal{H})\cap \mathcal{D}(L)$, and note that $\varphi^*$ is also an eigenfunction of $\widehat\lambda_0$ with norm 1. Then we have that
\begin{equation*}
    \langle \varphi^*,L\varphi^*\rangle \le \widehat\lambda_0 +\|H\|\le \lambda_0 + 2\|H\|<\lambda_1,
\end{equation*}
which contradicts the definition of $\lambda_1$ in \eqref{eq:var_eigenv}.
Thus the function which generates the eigenspace and has norm $1$ is unique, up to the sign. Therefore, changing the sign if necessary, we may assume that the function $\widehat\varphi\in \mathsf{S}(\mathcal{H})$ from the eigenspace satisfies
	$\langle \varphi , \widehat \varphi\rangle \ge 0$
	. 

%--------------------------
\subsection{Proof of \eqref{eq: DavisKahan_bnd} from Remark~\ref{rem:bdd_DK}}
\label{sec:proof_Davis_Kahan_bnd}
From Lemma~\ref{lem:conseq_Weyl_Ass2} we have that $\widehat \lambda_0$ is an isolated point in the spectrum, i.e., an eigenvalue of $\widehat L$ with eigenvector $\widehat \varphi$. Thus $\widehat L$ is invariant on $\lspan\{\widehat \varphi\}$ and on the orthogonal complement $\lspan\{\widehat \varphi\}^\perp$.

Then writing $\widehat L_1$ for the action of $\widehat L$ on $(\lspan\{\widehat \varphi\})^\perp$ as in \eqref{eq:decom_hatA}, we have that $\sigma(\widehat L_1)\subset [\widehat \lambda_1,\infty)\subset [\lambda_1 - \|H\|,\infty)\subset [\lambda_0 + \nu/2,\infty)$, by Lemma~\ref{lemma:wey} and Assumption~\ref{assm:H_small}.

Translating the sin~$\theta$ theorem of \cite{DavisKahan1970} (Section~\ref{subsec:projection_notation}) into the present setting, we obtain 
\begin{equation*}
        \frac{\nu}{2}\sqrt{1-\langle \widehat \varphi, \varphi \rangle^2} \le \|H \varphi\|,
    \end{equation*}
since $\sigma(L_0) = \{\lambda_0\}$ and $\sigma(\widehat L_1)\subset [\lambda_0 + \nu/2,\infty)$. Here $L_0$ denotes the action of $L$ on $\lspan \{\varphi\}$.

The proof of \eqref{eq: DavisKahan_bnd} from Remark~\ref{rem:bdd_DK} is eventually completed by following a calculation analogous to those in the proof of Theorem~\ref{thm:bdd_case}.

%%%%%%%%%%Declarations%%%%%%%%%%

\acks % Place the text of your acknowledgements after the \acks (or \Acks) command. This will generate the heading "Acknowledgements". If you wish to make only one acknowledgement, use \ack (or \Ack).
\noindent We would like to thank the following for interesting discussions on topics related to this work: Abraham Ng, Murray Pollock, Sam Power, Lionel Riou-Durand. We would like to thank the Oberwolfach Uncertainty Quantification meeting, March 2019 for facilitating discussions which lead to this work. We would particularly like to thank the anonymous referees whose comments substantially improved the paper, including shortening proofs of Theorem~\ref{thm:bdd_case} and Lemma~\ref{lem:conseq_Weyl_Ass2}.

\fund % Place any funding information for this work after the \fund (or \Fund) command.
\noindent DR acknowledges support of the
DFG within project 389483880.
AQW was funded by EPSRC grant CoSInES (EP/R034710/1).

\competing % Place any information on competing interests after the \competing (or \Competing) command.
\noindent There were no competing interests to declare which arose during the preparation or publication process of this article.

%\supp \noindent In this document we provide all deferred proofs and some additional auxiliary results which were omitted from the main text The supplementary material for this article can be found at http://doi.org/10.1017/[TO BE SET]. % Delete this line if there are no supplementary files related to this article. If there are supplementary files related to your article, leave the line unchanged.

%%%%%%%%%%%%%%%%%%%%%%%%%%%%%%%%%%%%%%%%%%%%%%%%%%%%%%%%%%%%%
%%                  The Bibliography                       %%
%%                                                         %%
%%  imsart-???.bst  will be used to                        %%
%%  create a .BBL file for submission.                     %%
%%                                                         %%
%%  Note that the displayed Bibliography will not          %%
%%  necessarily be rendered by Latex exactly as specified  %%
%%  in the online Instructions for Authors.                %%
%%                                                         %%
%%  MR numbers will be added by VTeX.                      %%
%%                                                         %%
%%  Use \cite{...} to cite references in text.             %%
%%                                                         %%
%%%%%%%%%%%%%%%%%%%%%%%%%%%%%%%%%%%%%%%%%%%%%%%%%%%%%%%%%%%%%

%% if your bibliography is in bibtex format, uncomment commands:
\bibliographystyle{APT} % Style BST file (imsart-number.bst or imsart-nameyear.bst)
\bibliography{approx_QS}       % Bibliography file (usually '*.bib')

\end{document}